\newtheorem{theorem}{Theorem}[section]
\newtheorem{corollary}[theorem]{Corollary}
\newtheorem{claim}[theorem]{Claim}
\newtheorem{proposition}[theorem]{Proposition}
\newtheorem{definition}[theorem]{Definition}
\newtheorem{question}[theorem]{Question}
\newcommand{\comm}[1]{}
\renewcommand{\deg}{\operatorname{deg}}
\newcommand{\bir}{\operatorname{\bf BReg}}
\newcommand{\reg}{\operatorname{\bf Reg}}
\def\leukfrac#1/#2{\leavevmode
               \kern.1em
                \raise.9ex\hbox{\the\scriptfont0 ${}_#1$}
                \hskip -1pt\kern-.1em
                /\kern-.15em\lower.10ex\hbox{\the\scriptfont0 ${}_#2$}}
\def\diam{\mathop{\operator@font diam}\nolimits}
\begin{document}
\title{Fractional Isomorphism of Graphons}
\author{Jan Greb\' ik and Israel Rocha}
\address{\emph{Grebík}:
Mathematics Institute.
University of Warwick,
Coventry CV4~7AL, UK} 
%Institute of Computer Science of the Czech Academy
%of Sciences. Pod Vodárenskou v\v{e}\v{z}\'i 2, 182~07, Prague, Czechia.}
\email{jan.grebik@warwick.ac.uk}
\address{\emph{Rocha}: 
Institute of Computer Science of the Czech Academy of Sciences.
Pod Vodárenskou v\v{e}\v{z}\'i 2, 182~07, Prague, Czechia.
With institutional support RVO:67985807. }
\email{israelrocha@gmail.com}
\thanks{\emph{Greb\' ik} was supported by the Czech Science Foundation,
grant number GJ16-07822Y, by the grant GAUK 900119
of Charles University and by Leverhulme Research Project Grant
RPG-2018-424.
Part of the work was done while \emph{Greb\' ik} was affiliated with Institute of Computer
Science of the Czech Academy of Sciences, with institutional support RVO:67985807.}
\thanks{\emph{Rocha} was supported by the Czech Science Foundation,
grant number GJ16-07822Y and GA19-08740S}

\begin{abstract}
We work out the theory of fractional isomorphism of graphons as a generalization to the classical theory of fractional isomorphism of finite graphs.
The generalization is given in terms of homomorphism densities of finite trees and it is characterized in terms of distributions on iterated degree measures, Markov operators, weak isomorphism of a conditional expectation with respect to invariant sub-$\sigma$-algebras and isomorphism of certain quotients of given graphons.
%Our proofs use a weak version of the mean ergodic theorem, and correspondences between objects such as Markov projections, sub-$\sigma$-algebras, conditional expectation, etc.
%That also provides an alternative proof for the characterizations of fractional isomorphism of graphs without the use of Birkhoff–von Neumann Theorem.

%In this paper we work out the theory of fractional isomorphism of graphons as a generalization to the classical theory of fractional isomorphism of finite graphs.
%The generalization is given in terms of Markov opertors on a Hilbert space and it is characterized in terms of a push-forward of the graphon to a quotient space and also in terms of conditional expectations with respect to a sub-$\sigma$-algebra.
%Furthermore, we show that graphons are fractionaly isomorphic if and only if they share the same homomorphism density of trees.
%Our proofs use a weak version of the mean ergodic theorem, and correspondences between objects such as Markov projections, sub-$\sigma$-algebras, measurable decompositions, etc.
%That also provides an alternative proof for the characterizations of fractional isomorphism of graphs without the use of Birkhoff–von Neumann Theorem.
\end{abstract}

\maketitle

\section{Introduction}\label{sec:Intro}

\emph{Fractional isomorphism of finite graphs} is an important and well-studied notion in graph theory and combinatorial optimization.
Its importance comes from the fact that it is a relaxation of the notoriously difficult \emph{graph isomorphism problem}, it can be solved in polynomial time and, by a result of Babai, Erd\H{o}s, and Selkov \cite{BabaiErdosSelkow}, it distinguishes almost all non-isomorphic graphs.
In contrast, isomorphism problem is not known to be solvable in polynomial time nor to be NP-complete.\footnote{Recently Babai \cite{Babai} found an algorithm for the isomorphism problem that runs in quasipolynomial time.}
There are plenty of characterizations of fractional isomorphism that use different, seemingly unrelated, properties of graphs.
We summarize some of these characterizations that are relevant for our purposes later in the introduction.
We refer the reader to the book of Scheinerman and Ullman~\cite{FracGraphTheory} for a detailed study of the subject.

In this paper, we define and investigate the graphon counterpart of fractional isomorphism, i.e., \emph{fractional isomorphism of graphons}, and prove several equivalent characterizations.
Graphons, introduced by Borgs, Chayes, Lovász, Sós, Szegedy, and Vesztergombi \cite{Lovasz2006,Borgs2008c,Borgs2012c}, emerged as limit objects in the theory of dense graph limits.
The theory of graphons is mostly linked with problems in extremal graph theory and random graphs.
However, it has been successfully applied to solve problems in various areas of combinatorics.
We refer the reader to the beautiful  book of Lov\' asz \cite{LargeNetworks} for more details and examples.

The main contribution of this paper is twofold.
First, we provide a graphon versions of the most important notions that are used as characterizations of fractional isomorphism of finite graphs and show that they are all equivalent for graphons.
Finding graphon counterparts of notions or statements from graph theory is interesting in its own right, e.g. see~\cite{Hladky1,Hladky2}.
Usually it is easy to define the corresponding notion and difficult to provide statements but in our case both tasks turned out to be difficult.
Second, as one of the possible definitions/characterizations of fractional isomorphism of graphons is given via restricting the density vector to finite trees, i.e., graphons $W$ and $U$ are fractionally isomorphic if and only if $t(T,W)=t(T,U)$ for every finite tree $T$, we find this property worth to investigate solely from the graphon point of view.
We describe what similarity must necessarily occur between graphons that have the same tree densities and provide invariants in terms of special measures, called DIDM, that could be computed in cut-distance continuous way.

\subsection{Finite Graphs}

The easiest way to define fractional isomorphism of finite graphs is as a relaxation of the isomorphism problem via doubly stochastic matrices.
For a given graph $G$ denote as $A_G$ the incidence matrix of $G$.
Note that graphs $G$ and $H$ are isomorphic if and only if there is a permutation matrix $P$ such that $A_GP=PA_H$.
We say that a matrix $S$ is a \emph{doubly stochastic} matrix if $S$ has positive entries, i.e., $S\ge 0$, and $S{\bf 1}=S^T{\bf 1}={\bf 1}$.
It is easy to see that every permutation matrix is doubly stochastic.
We say that graphs $G$ and $H$ are \emph{fractionally isomorphic} if there is a doubly stochastic matrix $S$ such that $A_GS=SA_H$.

Next we recall the equivalent concepts that we use in this paper.
We start with \emph{iterated degree sequences}.
For a graph $G$ we denote as $\operatorname{N}(v)$ the set of all neighbors of a vertex $v\in V(G)$ in $G$ and put $\deg_G(v)=|\operatorname{N}(v)|$.
Define, as multisets,
\begin{equation}\label{eq:degreeseq}
\operatorname{D}_1(G)=\{\deg_G(v):v\in V(G)\} \ \text{ and } \ \operatorname{d}_1(v)=\{\deg_G(w):w\in N(v)\}
\end{equation}
and then inductively for every $k\in \mathbb{N}$
\begin{equation}\label{eq:degreeseq2}
\operatorname{D}_{k+1}(G)=\{\operatorname{d}_k(v):v\in V(G)\} \ \text{ and } \ \operatorname{d}_{k+1}(v)=\{\operatorname{d}_k(w):w\in N(v)\}.
\end{equation}
Finally, we define the iterated degree sequence of a graph $G$ as $\operatorname{D}(G)=(\operatorname{D}_k(G))_{k\in \mathbb{N}}$.
It is a result of Tinhofer \cite{Tinhofer86,Tinhofer1991} that $G$ and $H$ are fractionally isomorphic if and only if $D(G)=D(H)$.

An {\it equitable partition}\footnote{Here and throughout the paper we refer to the definition of equitable partition from~\cite{FracGraphTheory}, not to be confused with the definition of an equitable partition in the formulation of Szemerédi's regularity lemma.} of a graph $G$ is a sequence $\mathcal{C}=\{C_j\}_{j\in [k]}$ that is a non-trivial partition of $V(G)$, i.e., $C_j\not=\emptyset$ for every $j\in [k]$, $\bigsqcup_{j\in [k]} C_j=V(G)$, and $\deg_{G}(v_0,C_j)=\deg_{G}(v_1,C_j)$ for every $i,j\in [k]$ such that $v_0,v_1\in C_i$.
It means that each induced subgraph $G[C_i]$ must be regular and each of the bipartite graphs $G[C_i,C_j]$ must be biregular.
The {\it parameters of $\mathcal{C}$} are given by a pair $({\bf n},C)$, where ${\bf n}$ is an $k$-dimensional vector and $C$ is an $k\times k$ square matrix such that ${\bf n}(j)=|C_j|$ and $C(i,j)=\deg_{G}(v,C_j)$, for some $v\in C_i$, i.e, the parameters of $\mathcal{C}$ are the numerical information that we can read from $\mathcal{C}$.
If $G$ and $H$ admit equitable partitions $\mathcal{C}$ and $\mathcal{D}$ that can be indexed in such a way that the parameters of $\mathcal{C}$ and $\mathcal{D}$ are the same, then we say that $G$ and $H$ have a {\it common equitable partition}.
It is a result of Ramana, Scheinerman and Ullman~\cite{FracIsoScheinerman} that $G$ and $H$ are fractionally isomorphic if and only if they have a common equitable partition.
Prior to this it was shown by Tinhofer \cite{Tinhofer86} that $G$ and $H$ are fractionally isomorphic if and only if they have the same coarsest equitable partition.
Recall that a partition $\mathcal{C}$ is {\it coarser} than a partition $\mathcal{D}$ if every element of $\mathcal{D}$ is a subset of some element of $\mathcal{C}$.
It is not hard to verify that every finite graph admits the {\it coarsest equitable partition}, i.e., equitable partition that is coarser than any other equitable partition.

The last equivalence that we mention is the most surprising one.
For finite graphs $F$ and $G$ we denote as $\operatorname{Hom}(F,G)$ the collection of all homomorphisms from $F$ to $G$.
It is a result of Dell, Grohe and Rattan~\cite{Dell2018LovszMW} that $G$ and $H$ are fractionally isomorphic if and only if $|\operatorname{Hom}(T,G)|=|\operatorname{Hom}(T,H)|$ for every finite tree $T$, see also Dvo\v r\' ak~\cite{Dvorak}.

\subsection{Graphons}

A \emph{graphon} is a symmetric measurable function $W:X\times X\to [0,1]$, where $(X,\mathcal{B})$ is a standard Borel space endowed with a Borel probability measure $\mu$.\footnote{The reason why we use standard Borel spaces and not standard probability spaces (or simply unit interval with the Lebesgue measure as it is usual) is that we work with the space of all Borel measures which is a standard Borel space under the assumption that the base space is standard Borel space.
Also we note that every standard probability space is given as the measure completion of some standard Borel space with a Borel probability measure.}
We write $\mathcal{W}_0$ for the space of all graphons after identifying graphons that are equal almost everywhere.
This makes $\mathcal{W}_0$ a subset of $L^\infty(X\times X,\mu\times \mu)$ and of $L^2(X\times X,\mu\times \mu)$ and one may consider the distances on $\mathcal{W}_0$ induced from the corresponding norms.
However, the most relevant notion of distance for studying graphons as dense graph limits comes from the {\it cut-norm} and is defined as
$$d_\Box(W,U)=\sup_{A,B\subseteq X}\left|\int_{A\times B} (W-U) \ d(\mu\times \mu)\right|,$$
where the supremum runs over all measurable subsets $A,B$ of $X$.
The {\it cut-distance $\delta_\Box$} is then defined as
$$\delta_\Box(W,U)=\inf_{\varphi} d_\Box(W^{\varphi},U),$$
where $W^{\varphi}(x,y)=W(\varphi(x),\varphi(y))$ and the infimum runs over all $\varphi:X\to X$ measure preserving bijections of $X$.
Considering $W^\varphi$ and $W$ to be the same is the measurable analogue of considering two finite graphs the same if they are isomorphic.
However, in the qualitative version given by $d_\Box$ we might get $\delta_\Box(W,U)=0$ while there is no single $\varphi$ such that $W^\varphi=U$.
Therefore, we say that $W$ and $U$ are {\it isomorphic} if we have $\varphi$ such that  $W^\varphi=U$ for some measure preserving bijection $\varphi:X\to X$ and we say that $W$ and $U$ are {\it weakly isomorphic} if $\delta_\Box(W,U)=0$.
Notice that $\delta_\Box$ is only a pseudometric on $\mathcal{W}_0$.
We write $\widetilde{\mathcal{W}}_0$ for the quotient space $\mathcal{W}_0$ modulo weak isomorphism equivalence.
It is easy to see that $\delta_\Box$ is a metric on $\widetilde{\mathcal{W}}_0$ and it is a fundamental result in the theory of graphons that $\left(\widetilde{\mathcal{W}}_0,\delta_\Box\right)$ is a compact metric space, see \cite{Lovasz2006}.

An equivalent description of convergence in the space $\widetilde{\mathcal{W}_0}$ can be obtained via homomorphism densities.
Let $F$ and $G$ be finite graphs.
The {\it homomorphism density of $F$ in $G$} is defined as
$$t(F,G)=\frac{|\operatorname{Hom}(F,G)|}{|V(G)|^{|V(F)|}}.$$
That is, $t(F,G)$ is the probability that a random map of the vertices of $F$ to the vertices of $G$ is a homomorphism.
Note that the notion is invariant under isomorphisms.
The analogous notion for graphons is defined as
$$t(F,W)=\int_{X^{V(F)}} \prod_{\{v,w\}\in E(F)} W(y(v),y(w)) \ d\mu^{\oplus |V(F)|}(y)$$
and it is not hard to see that $t(F,W)=t(F,U)$ whenever $W$ and $U$ are weakly isomorphic.
Remarkably, the authors of \cite{Lovasz2006,Borgs2008c} proved an equivalence between the two types of convergence: a sequence of graphons $W_{n}$ converges to $W$ in the cut-distance topology if and only if for every finite graph $F$ we have $t(F,W_{n})\rightarrow t(F,W)$.

An important way to view graphons is as self-adjoint Hilbert-Schmidt operators on $L^2(X,\mu)$.
Namely, for a graphon $W\in \mathcal{W}_0$ the operator $T_W:L^2(X,\mu)\to L^2(X,\mu)$ is defined as
$$T_W(f)(x)=\int_X W(x,y)f(y) \ d\mu(y),$$
where $f\in L^2(X,\mu)$ and $x\in X$, see \cite[Section~7.5]{LargeNetworks}.

\subsection{Fractional Isomorphism of Graphons}

We use a graphon analogue of the characterization of Dell, Grohe and Rattan mentioned above to define fractional isomorphism of graphons.
This shift from the number of homomorphisms to the homomorhism densities (of trees) when transitioning from graphs to graphons parallels the more classical situation of isomorphisms.
Indeed, we already saw that weak isomorphism of graphons is characterized by homomorphism densities, the finite counterpart to this is a result of Lov\' asz \cite{Lovasz1967} which says that graphs $G$ and $H$ are isomorphic if and only if $|\operatorname{Hom}(F,G)|=|\operatorname{Hom}(F,H)|$ for every finite graph $F$.

\begin{definition}[Fractional Isomorphism of Graphons]
We say that graphons $W$ and $U$ are \emph{fractionally isomorphic} if
$$t(T,U)=t(T,W)$$
for every finite tree $T$.
\end{definition}

It follows from~\cite{Dell2018LovszMW} that this definition extends the definition for finite graphs in the sense that $G$ and $H$ are fractionally isomorphic (as finite graphs) if and only if they have the same number of vertices and $W_G$ and $W_H$,  their graphon representations, are fractionally isomorphic (as graphons).
This is in analogy with the fact that $G$ and $H$ are isomorphic if and only if they have the same number of vertices and $W_G$ and $W_H$ are weakly isomorphic.
Also it is a trivial consequence of the definition that fractional isomorphism is an equivalence relation on $\widetilde{\mathcal{W}}_0$ that is closed in the cut-distance topology.\footnote{If $W_n\xrightarrow{\delta_\Box} W$, $U_n\xrightarrow{\delta_\Box} U$ and $W_n, U_n$ are fractionally isomorphic for every $n\in \mathbb{N}$, then $W$ and $U$ are fractionally isomorphic.}

To state our main result, Theorem~\ref{th:main}, we need to introduce and recall some notions.
We try to keep things informal and rather intuitive in this section.
We start with analogue of doubly stochastic matrices.
An operator $S:L^2(X,\mu)\to L^2(X,\mu)$ is a {\it Markov operator}\footnote{Our main reference for the theory of Markov operators is \cite{eisner2015operator}.
We note that in~\cite{eisner2015operator} Markov operators are defined on $L^1$-spaces rather than on $L^2$-spaces.
The fact that theses notions are the same is explained in Appendix~\ref{App D}.} if $S\ge 0$, i.e., $S(f)\ge 0$ whenever $f\ge 0$, and $S({\bf 1}_X)=S^*({\bf 1}_X)={\bf 1}_X$, where $S^*$ is the adjoint of $S$.

We remind the reader that $(X,\mathcal{B})$ is a standard Borel space and $\mu$ is a Borel probability measure.
A sub-$\sigma$-algebra $\mathcal{C}$ of $\mathcal{B}$ is \emph{$W$-invariant}, where $W$ is a graphon, if $T_W(f)$ is $\mathcal{C}$-measurable whenever $f\in L^2(X,\mu)$ is $\mathcal{C}$-measurable.\footnote{To make this definition formally precise we require $\mathcal{C}$ to be \emph{relatively complete}, i.e., $A\in \mathcal{C}$ whenever there is $A'\in \mathcal{C}$ such that $A\subseteq A'$ and $\mu(A')=0$, see Section~\ref{sec:Basic}.}
We illustrate this notion with a few examples.
If $W$ is \emph{$q$-regular}, i.e., $q=\deg_W(x)=\int_X W(x,{-}) \ d\mu$ for ($\mu$-almost) every $x\in X$, then $\mathcal{C}=\langle\{\emptyset,X\}\rangle$ is $W$-invariant.
If $W$ satisfies $\deg_W(x)\not=\deg_W(y)$ for every $x\not=y\in X$, then the only $W$-invariant sub-$\sigma$-algebra is $\mathcal{B}$.
Another example is connected with the concept of twin-free graphons, see~\cite[Section~13.1.1]{LargeNetworks}.
Define
$$\mathcal{C}_{\operatorname{twin}}=\{B\in \mathcal{B}:x\in B \ \& \ W(x,{-})=W(y,{-}) \ \Rightarrow \ y\in B\}.$$
Then $\mathcal{C}_{\operatorname{twin}}$ is always $W$-invariant and $\mathcal{C}_{\operatorname{twin}}\not=\mathcal{B}$ if and only if $W$ is not twin-free graphon.
We show that for every graphon $W$ there exists the unique minimum $W$-invariant sub-$\sigma$-algebra and we denote it as $\mathcal{C}(W)$.
It is not obvious at this point but $W$-invariant algebras correspond to equitable partitions and $\mathcal{C}(W)$ corresponds to the coarsest equitable partition.

Unlike finite graphs, graphon space is rich enough to allow for averaging and quotients.
Given a sub-$\sigma$-algebra $\mathcal{C}$  of $\mathcal{B}$ we define $W_\mathcal{C}$ as a conditional expectation of $W$ given $\mathcal{C}\times \mathcal{C}$, i.e., $W_\mathcal{C}=\mathbb{E}\left(W|\mathcal{C}\times \mathcal{C}\right).$
In the context of standard Borel spaces it is possible to define a quotient graphon $W/\mathcal{C}$ on a quotient space $(X/\mathcal{C},\mathcal{C}')$ with Borel probability measure $\mu/\mathcal{C}$ that is weakly isomorphic to $W_\mathcal{C}$.
Note that the quotient graphon $W/\mathcal{C}_{\operatorname{twin}}$ is a twin-free version of $W$.

The last concept is inspired by iterated degree sequences.
We describe the first two steps of the analogous iterative construction.
Given a graphon $W$ and $x\in X$ consider the Borel assignment
$$x\mapsto i_{W,1}(x)=\deg_{W}(x)=\int_{X} W(x,y) \ d\mu(y)\in [0,1].$$
This is just the degree map that corresponds to $\deg_G$ in (\ref{eq:degreeseq}).
Note that we can view $i_{W,1}(x)$ as a measure on a one-point space $\{\star\}$ and that the space of all measures on $\{\star\}$ of total mass at most $1$ is naturally isomorphic to $[0,1]$.
Taking the Borel probability measure on $[0,1]$ that is the distribution of degrees of $W$, i.e., the push-forward of $\mu$ via $i_{W,1}$, is the analogue of $D_1$ in (\ref{eq:degreeseq}).
The second step is to assign to a vertex $x$ a measure that is a weighted modification of the distribution of the degrees of $W$ with weights given by $W(x,{-})$.
More precisely we assign to a vertex $x\in X$ a Borel measure $i_{W,2}(x)$ on $[0,1]$ that is defined as
$$i_{W,2}(x)(A)=\int_{i_{W,1}^{-1}(A)}W(x,y) \ d\mu(y).$$
This corresponds to $d_1$ in (\ref{eq:degreeseq2}) and similarly we define the analogue of $D_2$ in (\ref{eq:degreeseq2}) as the push-forward of $\mu$ via $i_{W,2}$, this is a Borel probability measure on the space of all Borel measures on $[0,1]$.

This construction can be iterated to define a Borel map $i_W:X\to \mathbb{M}$, where $i_W(x)$ is an infinite sequence of Borel measures and $\mathbb{M}$ is a compact metric space that is defined independently of $W$ and whose elements we call \emph{iterated degree measures}.
The analogue of an iterated degree sequence is then a distribution $\nu_W$ on $\mathbb{M}$ that is the push-forward of $\mu$ via $i_W$.
We call such distributions \emph{DIDM, distributions on iterated degree measures}, a precise definition is given in Section~\ref{sec:DIDD}.
We show that the assignment $W\to \nu_W$ is continuous when $\mathcal{W}_0$ is endowed with the cut-distance topology and the space of Borel probability measures on $\mathbb{M}$ with the weak* topology.

Now we are ready to state our main result.

\begin{theorem}[Characterizations of Fractional Isomorphism of Graphons]\label{th:main}
Let $W$ and $U$ be graphons.
Then the following are equivalent:
\begin{enumerate}
	\item $t(T,W)=t(T,U)$ for every finite tree $T$,
	\item $\nu_W=\nu_U$,
	\item $W/\mathcal{C}(W)$ and $U/\mathcal{C}(U)$ are isomorphic,
	\item there is a Markov operator $S:L^2(X,\mu)\to L^2(X,\mu)$ such that $T_W\circ S=S\circ T_U$,
	\item there is a $W$-invariant sub-$\sigma$-algebra $\mathcal{C}$ and a $U$-invariant sub-$\sigma$-algebra $\mathcal{D}$ such that $W_\mathcal{C}$ and $U_{\mathcal{D}}$ are weakly isomorphic.
\end{enumerate}
\end{theorem}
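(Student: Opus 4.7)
The plan is to prove the five conditions equivalent via a cycle: the cheap link (3) $\Rightarrow$ (5) $\Rightarrow$ (1), the equivalence (1) $\Leftrightarrow$ (2), the round trip (1) $\Rightarrow$ (4) $\Rightarrow$ (1), and finally the hard closing implication (1) $\Rightarrow$ (3). The first link is immediate: taking $\mathcal{C}=\mathcal{C}(W)$ and $\mathcal{D}=\mathcal{C}(U)$, the quotients $W/\mathcal{C}(W)$ and $U/\mathcal{C}(U)$ are weakly isomorphic to $W_{\mathcal{C}(W)}$ and $U_{\mathcal{C}(U)}$ by construction. For (5) $\Rightarrow$ (1) the key lemma is that for any $W$-invariant $\mathcal{C}$ and any finite tree $T$ one has $t(T,W)=t(T,W_\mathcal{C})$; I would prove this by induction on $T$, rooting at a leaf and using that $T_W$ preserves $\mathcal{C}$-measurability together with the tower property $\mathbb{E}(T_W f\mid\mathcal{C})=T_{W_\mathcal{C}}\mathbb{E}(f\mid\mathcal{C})$. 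Since tree densities are invariant under weak isomorphism, (1) follows.

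For (1) $\Leftrightarrow$ (2), unpacking the inductive construction of $i_W$ yields, for each finite tree $T$, a continuous functional $\Phi_T$ on the compact metric space $\mathbb{M}$ such that $t(T,W)=\int_\mathbb{M}\Phi_T\, d\nu_W$. A Stone--Weierstrass argument then shows that the algebra generated by these $\Phi_T$ is uniformly dense in $C(\mathbb{M})$, so the tree-density vector determines the DIDM.

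For (4) $\Rightarrow$ (1), I would induct on a rooted tree, using $S\mathbf{1}_X=S^*\mathbf{1}_X=\mathbf{1}_X$ and the intertwining $T_W S=S T_U$ to migrate the subtree-density functions across $S$ edge by edge; pairing with $\mathbf{1}_X$ at the root reduces $t(T,W)$ to $t(T,U)$. For (1) $\Rightarrow$ (4), armed with the common DIDM $\nu:=\nu_W=\nu_U$, I would disintegrate both base measures over $\nu$ via $i_W$ and $i_U$, producing kernels $\kappa_W(\cdot\mid m)$ and $\kappa_U(\cdot\mid m)$; averaging the product kernels against $\nu$ defines a coupling that reads off as a Markov operator $S$, and the intertwining identity reduces to the observation, built into the definition of $i_W$, that the $W$-image of a function factoring through $i_W$ again factors through $i_W$.

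The main obstacle is the closing implication (1) $\Rightarrow$ (3). Here I would prove that $W/\mathcal{C}(W)$ is, up to isomorphism, canonically determined by $\nu_W$, by identifying $\mathcal{C}(W)$ with the relatively complete $\sigma$-algebra generated by the iterated degree map $i_W$. The invariance half — that this $\sigma$-algebra is $W$-invariant — follows directly from the inductive construction of $i_W$; the minimality half — that every $W$-invariant $\mathcal{C}$ refines it — is the delicate point, which I would tackle by reconstructing the coordinates of $i_W$ as iterated $T_W$-images of $\mathcal{C}$-measurable test functions and passing to a limit, exploiting that the DIDM construction is essentially the canonical fixed point of this procedure. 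Once $\mathcal{C}(W)$ is identified this way, the quotient $W/\mathcal{C}(W)$ becomes an explicit graphon on $(\mathbb{M},\nu_W)$; applying the same recipe to $U$ yields the same graphon on $(\mathbb{M},\nu_U)=(\mathbb{M},\nu_W)$, and the isomorphism in (3) is essentially the identity map on $\mathbb{M}$.
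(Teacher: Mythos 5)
Most of your plan runs parallel to the paper's actual argument: your (1) $\Leftrightarrow$ (2) is the paper's route (continuous tree functionals on $\mathbb{M}$ plus Stone--Weierstrass; note that ``a Stone--Weierstrass argument'' hides the real work there, namely that these functionals separate points of $\mathbb{M}$, proved by induction over the levels $\mathbb{M}_n$ using Corollary~\ref{cor:SepMeasures} --- see Proposition~\ref{pr:separate points}); your (5) $\Rightarrow$ (1) is Proposition~\ref{pr:tree functions and graphon}; and your (1) $\Rightarrow$ (3) is in substance the paper's (2) $\Rightarrow$ (3), since $\mathcal{C}(W)$ is exactly the relatively complete sub-$\sigma$-algebra generated by $i_W$ and $W/\mathcal{C}(W)$ is isomorphic to the kernel ${\bf U}[\nu_W]$ on $(\mathbb{M},\nu_W)$. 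Your (1) $\Rightarrow$ (4) via disintegration over the common DIDM is fine and is essentially the paper's (3) $\Rightarrow$ (4) with both quotients identified with $(\mathbb{M},\nu)$. The genuine gap is your (4) $\Rightarrow$ (1). The proposed induction ``migrating the subtree-density functions across $S$ edge by edge'' only works along paths: at an internal vertex with two or more branches the relevant function is a pointwise product $\prod_i T_W f^W_{\mathfrak{T}_i}$, and a Markov operator is not multiplicative, so from $T_W\circ S=S\circ T_U$ and the inductive hypothesis you reach $\prod_i S\bigl(T_U f^U_{\mathfrak{T}_i}\bigr)$ but cannot pull $S$ out of the product; the identity $f^W_{\mathfrak{T}}=S f^U_{\mathfrak{T}}$ you would need simply does not follow from the intertwining relation. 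Already for the star $K_{1,3}$ the best that positivity gives is a Jensen-type inequality $(Sg)^3\le S(g^3)$, which yields $t(K_{1,3},W)\le t(K_{1,3},U)$ and needs the adjoint relation $T_U\circ S^*=S^*\circ T_W$ to get the reverse inequality; for trees whose branches carry distinct subtree functions not even such one-sided comparisons between $\prod_i S(g_i)$ and $S(\prod_i g_i)$ are available.

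This is precisely the difficulty the paper's hardest step, (4) $\Rightarrow$ (5), is designed to overcome: from $T_W\circ S=S\circ T_U$ one gets that $T_W$ commutes with $S\circ S^*$ and $T_U$ with $S^*\circ S$; the Ces\`aro averages $\frac1n\sum_{k\in[n]}(S\circ S^*)^k$ and $\frac1n\sum_{k\in[n]}(S^*\circ S)^k$ converge by the Mean Ergodic Theorem (Theorem~\ref{th:Mean Ergodic}) to Markov projections, which by Theorem~\ref{th:MarkovProj} are conditional expectations onto invariant sub-$\sigma$-algebras $\mathcal{C}$ and $\mathcal{D}$; one then checks $P\circ S=S\circ Q$ and that $S$ restricts to an isometric Markov isomorphism of $L^2(X,\mathcal{D},\mu)$ onto $L^2(X,\mathcal{C},\mu)$ --- and it is this isometry (via Theorem~\ref{th:Markov Isomorphism}, an isometric Markov isomorphism comes from a measure-preserving almost bijection, hence does respect products) that substitutes for the missing multiplicativity and gives that $W_\mathcal{C}$ and $U_\mathcal{D}$ are weakly isomorphic, i.e.\ (5), whence (1) by your own last step. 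Without this (or an equivalent device for converting the Markov intertwiner into conditional expectations), in your scheme condition (4) is only ever a consequence of the other conditions and is never shown to imply them, so the five statements are not proved equivalent.
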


Here is a good place to mention that the authors announced in \cite{GrebikRocha}, in a slightly different language, the equivalence of (3)--(5).
Indeed, it was our original motivation to find a graphon analogue of equitable partitions and doubly stochastic matrices.
However, after extending the characterization to (1), that was inspired by \cite{Dell2018LovszMW}, and following suggestions of one of the referees we decided to emphasize the equivalence of (1) and (2) as the main result.

The paper is structured as follows.
In Section~\ref{sec:EssStruc} we describe the essential structure of fractionally isomorphic graphons in the more intuitive language of measurable partitions and in Section~\ref{sec:problems} we collect a few remarks and problems.
The rest of the paper is devoted to the proof of Theorem~\ref{th:main}.
In Section~\ref{sec:strategy} we sketch a strategy of the proof.
In Section~\ref{sec:Basic} we prove basic facts about sub-$\sigma$-algebras, invariant subspaces and the minimum algebra $\mathcal{C}(W)$.
In Section~\ref{sec:DIDD} we construct the space $\mathbb{M}$, define DIDM, and show the correspondence between integral kernels and DIDM.
In Section~\ref{sec:Tree functions} we prove the main technical result about the collection of tree functions $\mathcal{T}$ defined on $\mathbb{M}$.
Finally, in Section~\ref{sec:Proof} we prove Theorem~\ref{th:main}.
In Appendices~\ref{App A},~\ref{App B},~\ref{App C},~\ref{App D}, and ~\ref{App E} we collect several well-known facts about standard Borel spaces, spaces of probability measures, and the connection between sub-$\sigma$-algebras, conditional expectations, and Markov operators that we need in our proof.

We denote as $[n]$ the set $\{1,\dots,n\}$.
We write $\mu^{\oplus k}$ for the product measure of $k$-many copies $\mu$.
All the $L^p$ spaces that we consider in this paper are real and so are the spaces of continuous functions on compact spaces.
\footnote{Even though most of the classical results that we use are traditionally stated for complex $L^p$ spaces, they do hold for real spaces as well.
This is because we work either with real valued integral kernels or Markov operators.}

\section{Structure of Fractionally Isomorphic Graphons}\label{sec:EssStruc}

In this section we describe informally a general construction of a \emph{$V$-biregular blowup} of a graphon $V$ and show that every graphon obtained in this way is fractionally isomorphic to $V$.
The easiest way to describe this construction is in the language of measurable partitions that we used in \cite{GrebikRocha}.
This gives plenty of examples of fractionally isomorphic graphons that are not derived from finite graphs.
On the other hand, Theorem~\ref{th:main} implies that this describes all the examples.
Namely, for every pair of fractionally isomorphic graphons $W$ and $U$ there is a graphon $V$ such that $W$ and $U$ are $V$-biregular blowups.
This uses characterization (5) in Theorem~\ref{th:main}.
In the construction, we use some standard measure theoretic techniques on product spaces.
The reader familiar with these techniques can safely skip, after checking the notation in the next paragraph, to Section~\ref{subsec:uncount}.

An intuitive explanation of the construction is as follows.
Pick a graphon $V$ on a standard Borel space $Y$ with a probability measure $\rho$ and form a space $X$ by blowing up each $y\in Y$ to a copy of the unit interval.
There is a canonical measure on $X$, namely the product measure $\rho\times \lambda$, where $\lambda$ is the Lebesgue measure.
For each $y,z\in Y$ pick a biregular function $\Omega_{y,z}\in \bir_{V(y,z)}$ on $[0,1]$ (see below) and glue them together to create a function $W:X\times X\to [0,1]$.
If the choices are symmetric and measurable in $(y,z)$, then $W$ is a graphon on $X$.  
Any such $W$ is called a \emph{$V$-biregular blowup}.

Before we formalize the definition, we recall the basic concepts.
A partition $\eta$ of a standard Borel space $X$ is \emph{measurable} if there is a Borel map $q:X\to Y$, where $Y$ is a standard Borel space such that $\eta=\{q^{-1}(y)\}_{y\in Y}$.
A typical example of a measurable partition is a partition induced by a projection in a product space, i.e., $X=Y\times [0,1]$ and $\eta=\{\{y\}\times [0,1]\}_{y\in Y}$.
There is a correspondence between measurable partitions and sub-$\sigma$-algebras.

Let $q\in [0,1]$ and define $\bir_q$ to be the space of all measurable functions $U:[0,1]^2\to [0,1]$ such that 
$$q=\int_{[0,1]} U(x,{-}) \ d\lambda=\int_{[0,1]} U({-},x) \ d\lambda$$
for ($\lambda$-almost) every $x\in [0,1]$ and put $\bir=\bigcup_{q\in [0,1]} \bir_q$.
Moreover, let $\reg_q$ be a subset of $\bir_q$ that consists of symmetric functions.

\subsection{Countable case}

Before we present the general construction we start with a graphon $V$ on a countable measure space $(Y,\mathcal{D})$ with a Borel probability measure $\rho$, i.e., $|Y|\le \aleph_0$, $\mathcal{D}$ consists of all subsets of $Y$ and $\rho$ is, after a slight abuse of notation, fully determined by a function $\rho:Y\to [0,1]$ such that $\sum_{y\in Y}\rho(y)=1$.
This corresponds to atomic sub-$\sigma$-algebras and countable measurable partitions.
Here the measurable analogue of equitable partition is easy to digest and so is its connection to invariant sub-$\sigma$-algebras.

%Next we make this construction precise and make a comment about the notion of an \emph{equitable measurable partition}.

Let us start with a trivial case when $|Y|=1$ and $V$ is a constant graphon that attains a value $q\in [0,1]$.
In this case a \emph{$V$-biregular blowup} is any element of $\reg_q$.
It is easy to see that if $W\in \reg_q$, then $\mathcal{C}=\left\langle\{\emptyset, [0,1]\}\right\rangle$ is a $W$-invariant subalgebra and $W/\mathcal{C}=V$.
Therefore elements of $\reg_q$ are pairwise fractionally isomorphic.
In the language of measurable partitions we might say that given $W\in \reg_q$ we consider the trivial partition $\eta=\{[0,1]\}$ of $[0,1]$.
Then $\eta$ satisfies a measurable analogue of the condition from the definition of equitable partition from previous section.
Namely, we have
$$\deg_W(x,[0,1])=\deg_W(x)=q$$
for $\lambda$-almost every $x\in [0,1]$.

Suppose that $Y=\mathbb{N}$ and pick a graphon $V$ on $Y$.
Put $I_i=[0,1]$ and $\lambda_i$ for the Lebesgue measure on $I_i$, where $i\in \mathbb{N}$.
Consider a measure space $X=\bigsqcup_{i\in \mathbb{N}} I_i$ with a Borel probability measure $\mu=\sum_{i\in \mathbb{N}} \rho(i)\lambda_i$.
Note that 
$$\mu(A)=\int_{Y} \lambda_i(A) \ d\rho(i)$$
holds for every Borel set $A\subseteq X$.
Let 
$$\Omega:\mathbb{N}\times \mathbb{N}\to \bir$$
be a map that satisfies $\Omega(i,j)=\Omega(j,i)$ and $\Omega(i,j)\in \bir_{V(i,j)}$.
Now for every such $\Omega$ we define a \emph{$V$-biregular blowup} to be a graphon $W_\Omega$ on $X$ defined as 
$$W_\Omega((i,r),(j,s))=\Omega(i,j)(r,s).$$
Let $\mathcal{C}$ be a sub-$\sigma$-algebra generated by the partition $\eta=\{I_i\}_{i\in \mathbb{N}}$.
It is straightforward to check that $\mathcal{C}$ is $W$-invariant and $W_\Omega/\mathcal{C}=V$.
Therefore any two $V$-biregular blowups are fractionally isomorphic.
It follows from the definition that $\eta$ satisfies  
$$\deg_W((i,r),I_j)=\int_{I_j} W((i,r),(j,s)) \ d \lambda_i(s)=V(i,j)$$
for every $i,j\in \mathbb{N}$ and $\mu$-almost every $r\in I_i$.
This is the measurable analogue of the equitable condition for a countable $Y$.

\subsection{Uncountable case}\label{subsec:uncount}

Suppose that $Y$ is an uncountable standard Borel space with a Borel probability measure $\rho$ and $V$ is a graphon on $Y$.
A rough strategy to define $V$-biregular blowup is the same as above, i.e., replace each point by a copy of a unit interval and glue together elements of $\bir$ according to values of $V$.
However, we  need to be more careful in this case to preserve measurability.

Let $X=Y\times [0,1]$ and $\mu=\rho\times \lambda$ be the product measure.
One can think of $\mu$ as a collection of measures $\{\lambda_y\}_{y\in Y}$, where $\lambda_y$ is the Lebesgue measure on the strip $\{y\}\times [0,1]$ such that 
$$\mu(A)=\int_{Y} \lambda_y(A) \ d\rho(y)$$
holds for every Borel set $A\subseteq X$.
Let 
$$\Omega:Y\times Y\to \bir$$
be a Borel map that satisfies $\Omega(y,z)=\Omega(z,y)$ and $\Omega(y,z)\in \bir_{V(y,z)}$.
A \emph{$V$-biregular blowup} that is given by $\Omega$ is a graphon $W_\Omega$ on $X$ defined as 
$$W_\Omega((y,r),(z,s))=\Omega(y,z)(r,s).$$
Let $\eta=\{\{y\}\times [0,1]\}_{y\in Y}$ be a measurable partition of $X$ and $\mathcal{C}$ be the sub-$\sigma$-algebra generated by $\eta$.
It follows from the construction that the following condition, a measurable analogue of equitable partition, is satisfied
$$\deg_W((y,r),\{z\}\times [0,1])=\int_{[0,1]} W_\Omega((y,r),(z,{-})) \ d\lambda_z=V(y,z)$$
holds for every $y,z\in Y$ and $\lambda_y$-almost every $(y,r)\in \{y\}\times[0,1]$.
It is straightforward to check that this condition implies that $\mathcal{C}$ is $W_\Omega$-invariant and $W_\Omega/\mathcal{C}=V$.
Consequently all $V$-biregular blowups of $V$ are pairwise fractionally isomorphic.

\subsection{Reversed direction}

We briefly sketch why the above construction describes all the examples without going into technical details.

Let $W$ be a graphon on $X$ and $\mathcal{C}$ be a $W$-invariant sub-$\sigma$-algebra.
Up to a small technical nuance, it follows from the Measure Disintegration Theorem, see~\cite[Exercise~17.35]{kechris1995classical}, that there is a standard Borel space $Y$ with a Borel probability measure $\rho$ and an isomorphism between $(X,\mu)$ and $(Y\times [0,1],\rho\times \lambda)$ such that $\mathcal{C}$ is exactly the sub-$\sigma$-algebra generated by the preimage of the measurable partition $\eta=\{\{y\}\times [0,1]\}_{y\in Y}$ under this isomorphism.
Therefore, we may abuse the notation and assume that $W$ is a graphon on $Y\times [0,1]$ and  $V=W/\mathcal{C}$ is a graphon on $Y$.
Define
$$\Omega(y,z)=W\upharpoonright \left(\{y\}\times [0,1]\right)\times \left(\{z\}\times [0,1]\right).$$
It follows that $\Omega$ is a Borel map and one can show that the condition that $\mathcal{C}$ is $W$-invariant implies $\Omega(y,z)\in \bir_{V(y,z)}$ for $(\rho\times \rho)$-almost every $(y,z)\in Y\times Y$.

Now by (5) in Theorem~\ref{th:main}, if $W$ and $U$ are fractionally isomorphic, then they are $V$-biregular blowups, where $V=W/\mathcal{C}(W)=W/\mathcal{C}(U)$.

\section{Further remarks and problems}\label{sec:problems}

A direct consequence of Theorem~\ref{th:main} is that the assignment
$$W\mapsto W_{\mathcal{C}(W)}$$
is a well defined map from $\widetilde{\mathcal{W}_0}$ to $\widetilde{\mathcal{W}}_0$.
We denote the range of the map as $\mathcal{F}\subseteq \widetilde{\mathcal{W}}_0$ and call elements of $\mathcal{F}$ {\it fraction-free} graphons.
It follows from (3) in Theorem~\ref{th:main} that the restriction of the equivalence relation induced by fractional isomorphism to $\mathcal{F}$ is equal to weak isomorphism. 
Finally, it follows Corollary~\ref{cor:1 implies 2} that $W\mapsto \nu_W$ is a cut-distance continuous map when the set of all Borel probability measures on $\mathbb{M}$, $\mathscr{P}(\mathbb{M})$, is endowed with the weak* topology.
Therefore those DIDM that correspond to graphons form a closed subset of $\mathscr{P}(\mathbb{M})$.

\begin{question}
Is $W\mapsto W_{\mathcal{C}(W)}$ cut-distance continuous?
\end{question}

This is equivalent with $\mathcal{F}$ being closed.
Suppose that $\mathcal{F}$ is closed, $U_n\to_{\delta_\Box} U$ and put $V_n=(U_n)_{\mathcal{C}(U_n)}$.
By compactness of cut-distance and our assumption, we may assume that $V_n \to_{\delta_\Box} V\in \mathcal{F}$.
Since fractional isomorphism is a closed equivalence relation we have that $V$ and $U$ are fractionally isomorphic.
By (3) and (5) in Theorem~\ref{th:main}, we deduce that $V_{\mathcal{C}(V)}$ is weakly isomorphic to $U_{\mathcal{C}(U)}$.
However, $V_{\mathcal{C}(V)}=V$ and that gives immediately $U_{\mathcal{C}(U)}=V$ in $\widetilde{\mathcal{W}}_0$.
Reversed implication is trivial.

\begin{question}Let $W$ and $U$ be fractionally isomorphic graphons.
Is it possible to find sequences $\{G_n\}_{n\in \mathbb{N}}$ and $\{H_n\}_{n\in \mathbb{N}}$ of finite graphs such that $G_n$ is fractionally isomorphic to $H_n$ for each $n\in \mathbb{N}$ and
$$G_n\to_{\delta_\Box} W \ \text{ and } \ H_n\to_{\delta_\Box} U?$$
\end{question}
A positive answer to this question combined with the observation that fractional isomorphism is a closed equivalence relation would provide a new characterization in Theorem~\ref{th:main}.

\section{Structure of the proof}\label{sec:strategy}

We summarize the structure of the proof of Theorem~\ref{th:main}.
We note that it is more suitable to work with general integral kernels (non-symmetric functions) rather than graphons.
\begin{itemize}
	\item (1) $\Rightarrow$ (2): we define a collection of continuous functions $\mathcal{T}\subseteq C(\mathbb{M},\mathbb{R})$ that corresponds in a certain sense to tree densities and separates points of $\mathbb{M}$ (Section~\ref{sec:Tree functions}), then we use a version of Stone-Weierstrass's Theorem (Corollary~\ref{cor:SepMeasures}),
	\item (2) $\Rightarrow$ (3): we define an integral kernel ${\bf U}[\nu]$ for every DIDM and show that ${\bf U}[\nu_W]$ and $W/\mathcal{C}(W)$ are isomorphic for every graphon $W$ (Section~\ref{sec:DIDD}),
	\item (3) $\Rightarrow$ (4): we show that $\mathbb{E}({-}|\mathcal{C}(W))\circ T_W=T_{W_{\mathcal{C}(W)}}\circ\mathbb{E}({-}|\mathcal{C}(W))$ and that isomorphic graphons are intertwined by a Markov operator (Section~\ref{sec:Basic} and Appendix~\ref{App E}),
	\item (4) $\Rightarrow$ (5): we observe that (4) implies $T_W\circ (S\circ S^*)=(S\circ S^*)\circ T_W$ (similarly for $U$) and use the Mean Ergodic Theorem (Theorem~\ref{th:Mean Ergodic}) to show that $\frac{1}{n}\sum_{k\in [n]} (S^*S)^k$ converges to a Markov projection; then we exploit the duality between Markov projections and relatively complete sub-$\sigma$-algebras (Appendix~\ref{App D}),
	\item (5) $\Rightarrow$ (1): tree densities are preserved when taking a conditional expectation given invariant sub-$\sigma$-algebras (Section~\ref{sec:Tree functions}).
\end{itemize}

\section{Subalgebras}\label{sec:Basic}

In this section we prove basic statements about invariant sub-$\sigma$-algebras, conditional expectations and quotients of graphons, and define the minimum $W$-invariant sub-$\sigma$-algebra $\mathcal{C}(W)$ via a canonical sequence of sub-$\sigma$-algebras $\left\{\mathcal{C}^W_n\right\}_{n\in \mathbb{N}}$.

Recall that $(X,\mathcal{B})$ is a standard Borel space and $\mu$ is a Borel probability measure on $X$, see Appendix~\ref{App A}.
The $L^2$-spaces are real and we denote the scalar product as $\langle{-},{-}\rangle$.
For $V\subseteq L^2(X,\mu)$ we let $V^\bot$ be the orthogonal complement of $V$.
We write ${\bf 1}_A$ for the characteristic function of $A\subseteq X$.
If $\mathcal{C}$ is (relatively complete) sub-$\sigma$-algebra of $\mathcal{B}$, then it is a standard fact that the linear hull of $\{{\bf 1}_A\}_{A\in \mathcal{C}}$ is dense in $L^2(X,\mathcal{C},\mu)$, see the corresponding definitions below.

If $f$ and $g$ are measurable functions defined on some measure space $Y$, then we abuse the notation and write $f=g$ for equality almost everywhere.
It is always clear from the context what type of equality we mean.

\subsection{Kernels}

An {\it integral kernel on $X$} is a $\left(\mathcal{B}\times \mathcal{B}\right)$-measurable map
$$W:X\times X\to [0,1].$$
The corresponding integral operator $T_W:L^2(X,\mu)\to L^2(X,\mu)$ defined as
$$T_W(f)(x)=\int_X W(x,y)f(y) \ d\mu(y)$$
is a  well-defined Hilbert-Schmidt operator (see \cite[Chapter 4, Exercise 15]{Rud2}).
We consider integral kernels $W$ and $U$ on $X$ to be the same if $T_W=T_U$.
It is a standard fact that this is equivalent with $W(x,y)=U(x,y)$ for $\left(\mu\times \mu\right)$-almost every $(x,y)\in X\times X$.
In other words, $W$ and $U$ are the same as elements of $L^\infty(X\times X,\mu\times \mu)$.
We say that an integral kernel $W$ is a {\it graphon (on $X$)} if $W(x,y)=W(y,x)$ for $\left(\mu\times \mu\right)$-almost every $(x,y)\in X\times X$.

\begin{claim}\label{cl:Self Adjoint=Graphon}
Let $W$ be an integral kernel on $X$.
Then $T_W$ is self-adjoint if and only if $W$ is a graphon.
\end{claim}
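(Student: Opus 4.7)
The plan is to identify the Hilbert-space adjoint $T_W^*$ with the integral operator of the transposed kernel $W^T(x,y):=W(y,x)$, and then invoke the correspondence between integral kernels (modulo $(\mu\times\mu)$-a.e.\ equality) and their induced operators that is stated immediately before the claim.

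First I would compute $\langle T_W f,g\rangle$ for arbitrary $f,g\in L^2(X,\mu)$. Because $0\le W\le 1$ and the Cauchy--Schwarz inequality gives
$$\int_{X\times X} |W(x,y)f(y)g(x)|\, d(\mu\times\mu)(x,y)\le \|f\|_1\cdot\|g\|_1 \le \|f\|_2\cdot\|g\|_2 <\infty,$$
Fubini's theorem applies, and therefore
$$\langle T_W f,g\rangle=\int_{X\times X} W(x,y)f(y)g(x)\, d(\mu\times\mu)(x,y)=\int_X f(y)\left(\int_X W(x,y)g(x)\, d\mu(x)\right) d\mu(y)=\langle f,T_{W^T}g\rangle.$$
By uniqueness of the adjoint this yields $T_W^*=T_{W^T}$.

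Next I would combine this with the identification principle recalled just before the claim: two integral kernels induce the same operator iff they agree $(\mu\times\mu)$-almost everywhere. Hence
$$T_W=T_W^*\ \Longleftrightarrow\ T_W=T_{W^T}\ \Longleftrightarrow\ W(x,y)=W(y,x)\ \text{for $(\mu\times\mu)$-a.e.}\ (x,y)\in X\times X,$$
which is exactly the definition of $W$ being a graphon. This establishes both directions simultaneously.

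I do not foresee any serious obstacle: the only technical point is the Fubini step, which is entirely routine given the boundedness of $W$ and the integrability of $f,g$, and the final equivalence reduces to the cited standard fact that $T_W$ determines $W$ up to $(\mu\times\mu)$-null sets.
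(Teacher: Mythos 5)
Your proof is correct, and since the paper states this claim without proof (as a standard fact), your argument is exactly the expected one: identify $T_W^*=T_{W^\mathsf{T}}$ via Fubini (justified because $0\le W\le 1$ and $\mu$ is a probability measure, so $\|f\|_1\|g\|_1\le\|f\|_2\|g\|_2<\infty$), then apply the identification, stated in the paper immediately before the claim, that $T_W=T_U$ iff $W=U$ $(\mu\times\mu)$-almost everywhere.
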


For a closed linear subspace $V\subseteq L^2(X,\mu)$ we denote as $P_V$ the orthogonal projection onto $V$.
We say that a subspace $V$ is \emph{$W$-invariant}, where $W$ is an integral kernel, if $T_W(V)\subseteq V$.
The following characterization of invariant subspaces for graphons is a standard application of the fact that $T_W$ is a compact operator.

\begin{proposition}\label{pr:Invariant Subspace}
Let $W$ be a graphon and $V\subseteq L^2(X,\mu)$ be a closed linear subspace.
Then the following are equivalent
\begin{enumerate}
	\item $V$ is $W$-invariant,
	\item there is an orthonormal basis of $V$ made of eigenvectors of $T_W$,
	\item $T_W$ commutes with the projection $P_V$,
	\item $T_W(V^\bot)\subseteq V^\bot$.
\end{enumerate}
\end{proposition}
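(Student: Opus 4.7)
My plan is to prove the four implications (1) $\Leftrightarrow$ (4), (1) $\Leftrightarrow$ (3), (2) $\Rightarrow$ (1), and (1) $\Rightarrow$ (2); the last one is where compactness genuinely enters, via the spectral theorem for compact self-adjoint operators.

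For (1) $\Leftrightarrow$ (4), the key ingredient is that $T_W$ is self-adjoint, which is guaranteed by Claim~\ref{cl:Self Adjoint=Graphon} since $W$ is a graphon. Assuming (1) and given $u \in V^\bot$, for every $v \in V$ one has $\langle T_W u, v\rangle = \langle u, T_W v\rangle = 0$ (as $T_W v \in V$), so $T_W u \in V^\bot$, proving (4). The reverse direction is symmetric, using $(V^\bot)^\bot = V$ (because $V$ is closed).

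For (1) $\Leftrightarrow$ (3), assuming (3) one has $T_W(V) = T_W P_V L^2 = P_V T_W L^2 \subseteq V$. For the converse, use (1) and (4) together: any $f \in L^2(X,\mu)$ decomposes uniquely as $f = P_V f + (I - P_V)f$ with $P_V f \in V$ and $(I-P_V)f \in V^\bot$. Applying $T_W$ gives $T_W f = T_W P_V f + T_W (I-P_V) f$, where the two summands lie in $V$ and $V^\bot$ respectively. By uniqueness of the orthogonal decomposition, $P_V T_W f = T_W P_V f$.

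The implication (2) $\Rightarrow$ (1) is immediate: if $\{e_i\}_{i \in I}$ is an orthonormal basis of $V$ with $T_W e_i = \lambda_i e_i$, then for $f = \sum_i \langle f, e_i\rangle e_i \in V$ one has $T_W f = \sum_i \lambda_i \langle f, e_i\rangle e_i \in V$ by continuity of $T_W$ and closedness of $V$.

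The main substantive step is (1) $\Rightarrow$ (2). Since $V$ is a closed subspace of the Hilbert space $L^2(X,\mu)$, it is itself a Hilbert space. The restriction $T_W|_V \colon V \to V$ is well-defined by (1), self-adjoint because $\langle T_W|_V f, g\rangle = \langle f, T_W|_V g\rangle$ for all $f, g \in V$, and compact because $T_W$ is Hilbert-Schmidt (hence compact) and restriction of a compact operator to an invariant closed subspace is compact. The spectral theorem for compact self-adjoint operators on a Hilbert space then yields an orthonormal basis of $\overline{\mathrm{range}(T_W|_V)}$ consisting of eigenvectors of $T_W|_V$ with nonzero eigenvalues; combining this with any orthonormal basis of $\ker(T_W|_V)$ (whose elements are eigenvectors of eigenvalue $0$) produces an orthonormal basis of $V = \ker(T_W|_V) \oplus \overline{\mathrm{range}(T_W|_V)}$ made of eigenvectors of $T_W$. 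I do not anticipate any real obstacle; the only step requiring care is verifying that the restriction is truly compact and self-adjoint before invoking the spectral theorem.
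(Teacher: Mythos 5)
Your argument is correct, and it is exactly the standard route the paper has in mind: the paper states Proposition~\ref{pr:Invariant Subspace} without proof, remarking only that it is a standard application of the compactness of $T_W$. Your handling of (1)$\Leftrightarrow$(3)$\Leftrightarrow$(4) via self-adjointness (Claim~\ref{cl:Self Adjoint=Graphon}) and of (1)$\Rightarrow$(2) via the spectral theorem for the compact self-adjoint restriction $T_W\upharpoonright V$, together with the decomposition $V=\ker(T_W\upharpoonright V)\oplus\overline{\operatorname{rng}(T_W\upharpoonright V)}$, is precisely that standard application.
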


\subsection{Conditional Expectation and Invariant Subspaces}

\begin{definition}[Relative complete sub-$\sigma$-algebra]
We say that $\mathcal{C}\subseteq \mathcal{B}$ is a \emph{$\mu$-relatively complete sub-$\sigma$-algebra} of $\mathcal{B}$ if it is a sub-$\sigma$-algebra and $Z\in \mathcal{C}$ whenever there is $Z_0\in \mathcal{C}$ such that $\mu(Z\triangle Z_0)=0$.
We define $\Theta_\mu$ as the set of all $\mu$-relatively complete sub-$\sigma$-algebras of $\mathcal{B}$.
\end{definition}

Since the measure $\mu$ is always fixed we say simply relatively complete sub-$\sigma$-algebra.

\begin{claim}\label{cl:Intersections of rel compl algebras}
Let $\Phi$ be a non-empty family of relatively complete sub-$\sigma$-algebras.
Then 
$$\left\{Z\in \mathcal{B}:\forall \mathcal{C}\in \Phi, \  Z\in \mathcal{C} \right\}\in \Theta_\mu.$$
\end{claim}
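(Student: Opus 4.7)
The plan is to let $\mathcal{D}=\bigcap_{\mathcal{C}\in \Phi}\mathcal{C}$ and verify in two steps that $\mathcal{D}\in \Theta_\mu$: first that $\mathcal{D}$ is a sub-$\sigma$-algebra of $\mathcal{B}$, and second that it is relatively complete with respect to $\mu$.

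For the first step I would simply note that sub-$\sigma$-algebra axioms pass to arbitrary non-empty intersections. Since $\Phi\neq \emptyset$, $\emptyset$ and $X$ lie in $\mathcal{D}$; closure under complements follows because $Z\in \mathcal{D}$ implies $Z\in \mathcal{C}$, hence $X\setminus Z\in \mathcal{C}$, for every $\mathcal{C}\in \Phi$; and closure under countable unions follows analogously from $\sigma$-algebra axioms holding in each $\mathcal{C}\in \Phi$.

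For the second step, suppose $Z\in \mathcal{B}$ and there exists $Z_0\in \mathcal{D}$ with $\mu(Z\triangle Z_0)=0$. Fix an arbitrary $\mathcal{C}\in \Phi$. Then $Z_0\in \mathcal{D}\subseteq \mathcal{C}$, and since $\mathcal{C}$ is itself relatively complete, the hypothesis $\mu(Z\triangle Z_0)=0$ forces $Z\in \mathcal{C}$. As $\mathcal{C}\in \Phi$ was arbitrary, $Z\in \bigcap_{\mathcal{C}\in \Phi}\mathcal{C}=\mathcal{D}$, which is exactly relative completeness of $\mathcal{D}$.

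There is no real obstacle here; the only point worth emphasizing is that the defining $Z_0$ is required to belong to $\mathcal{D}$ (not just to each $\mathcal{C}$ separately), which is precisely the hypothesis of the statement, so the argument in the second step goes through uniformly over $\Phi$.
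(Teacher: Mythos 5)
Your proof is correct and is the standard routine verification; the paper states this claim without proof, and your two steps (intersections of sub-$\sigma$-algebras are sub-$\sigma$-algebras, and relative completeness transfers since $Z_0\in\mathcal{D}\subseteq\mathcal{C}$ forces $Z\in\mathcal{C}$ for each $\mathcal{C}\in\Phi$) are exactly the argument the authors leave implicit. Nothing is missing.
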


As a direct consequence we have that every $\mathcal{X}\subseteq \mathcal{C}$ generates a unique relatively complete sub-$\sigma$-algebra that we denote as $\left\langle \mathcal{X}\right\rangle$.

Given $\mathcal{C}\in \Theta_\mu$ we define $L^2(X,\mathcal{C},\mu)$ to be the collection of all functions in $L^2(X,\mu)$ that are $\mathcal{C}$-measurable.
A standard fact about conditional expectation, see  Theorem~\ref{th:ConditionalExp}, yields the following.

\begin{claim}\label{cl:Basic 0}
Let $\mathcal{C}\in \Theta_\mu$.
Then $L^2(X,\mathcal{C},\mu)$ is a closed linear subspace and 
$$\mathbb{E}\left({-}|\mathcal{C}\right):L^2(X,\mu)\to L^2(X,\mu)$$
is the orthogonal projection onto $L^2(X,\mathcal{C},\mu)$.
\end{claim}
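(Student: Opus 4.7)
The plan is to first verify that $L^2(X,\mathcal{C},\mu)$ is an unambiguous closed linear subspace of $L^2(X,\mu)$, and then to use the defining property of conditional expectation recorded in Theorem~\ref{th:ConditionalExp} to identify $\mathbb{E}({-}|\mathcal{C})$ with the orthogonal projection onto this subspace.

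The role of relative completeness is exactly to make the property of being $\mathcal{C}$-measurable descend to $L^2$ equivalence classes. Concretely, if $f$ is $\mathcal{C}$-measurable and $g$ is $\mathcal{B}$-measurable with $f=g$ $\mu$-almost everywhere, then $N=\{x:f(x)\neq g(x)\}$ is a $\mathcal{B}$-measurable $\mu$-null set; since $\emptyset\in\mathcal{C}$ and $\mu(N\triangle \emptyset)=0$, relative completeness gives $N\in\mathcal{C}$, so $g$ agrees with a $\mathcal{C}$-measurable function off a set in $\mathcal{C}$ and is itself $\mathcal{C}$-measurable. Hence $L^2(X,\mathcal{C},\mu)$ is well defined as a subset of $L^2(X,\mu)$, and linearity is immediate. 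For closedness, I would take an $L^2$-convergent sequence $f_n\in L^2(X,\mathcal{C},\mu)$ with $f_n\to f$ in $L^2$, pass to a subsequence $f_{n_k}\to f$ pointwise $\mu$-a.e., and note that the pointwise a.e. limit of $\mathcal{C}$-measurable functions is $\mathcal{C}$-measurable on a co-null set, so by the previous observation $f$ lies in $L^2(X,\mathcal{C},\mu)$.

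For the projection statement, Theorem~\ref{th:ConditionalExp} provides $\mathbb{E}(f|\mathcal{C})$ as the (a.e.) unique $\mathcal{C}$-measurable element of $L^2(X,\mu)$ satisfying $\int_A \mathbb{E}(f|\mathcal{C})\,d\mu=\int_A f\,d\mu$ for every $A\in\mathcal{C}$. Testing against $h=\mathbf{1}_A$ and extending by linearity gives $\langle \mathbb{E}(f|\mathcal{C}),h\rangle=\langle f,h\rangle$ for every $h$ in the linear hull of $\{\mathbf{1}_A:A\in\mathcal{C}\}$. This hull is dense in $L^2(X,\mathcal{C},\mu)$ by the standard fact recalled at the beginning of the section, so $\langle f-\mathbb{E}(f|\mathcal{C}),h\rangle=0$ for every $h\in L^2(X,\mathcal{C},\mu)$. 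Since $\mathbb{E}(f|\mathcal{C})\in L^2(X,\mathcal{C},\mu)$ already, the uniqueness characterization of the orthogonal projection forces $\mathbb{E}({-}|\mathcal{C})=P_{L^2(X,\mathcal{C},\mu)}$.

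The only real subtlety is the first paragraph: one has to pinpoint that relative completeness is exactly what is needed to promote an a.e. $\mathcal{C}$-measurable representative to an everywhere $\mathcal{C}$-measurable one, so that $L^2(X,\mathcal{C},\mu)$ is meaningful as a subspace of the quotient space $L^2(X,\mu)$. Everything else is a routine assembly of standard facts about conditional expectation and $L^2$ convergence.
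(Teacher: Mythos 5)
Your argument is correct. Note, though, that the paper does not really prove this claim at all: it treats it as an instance of the standard fact recorded in Theorem~\ref{th:ConditionalExp}, whose item (1) literally asserts that $\mathbb{E}({-}|\mathcal{C})$ is the orthogonal projection onto $L^2(X,\mathcal{C},\mu)$, and Appendix~\ref{App C} already declares $L^2(X,\mathcal{C},\mu)$ to be a closed subspace. What you do differently is to reconstruct these facts from more primitive ingredients: you use only the averaging identity (item (3)) together with the $\mathcal{C}$-measurability of $\mathbb{E}(f|\mathcal{C})$, the density of the linear hull of $\{{\bf 1}_A\}_{A\in\mathcal{C}}$ in $L^2(X,\mathcal{C},\mu)$, and the uniqueness of orthogonal projections in a Hilbert space; and you separately verify that relative completeness is exactly what makes ``$\mathcal{C}$-measurable'' well defined on $L^2$ equivalence classes (via preimages differing by null sets) and that the subspace is closed (via an a.e.-convergent subsequence, with the minor cleanup of redefining the limit on the $\mathcal{C}$-measurable non-convergence set). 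Your route buys a self-contained justification that makes the role of the relative completeness hypothesis explicit — something the paper leaves implicit in its footnote and appendix — at the cost of spelling out what the authors regard as a one-line citation; both are sound, and your derivation of the projection property from (3) plus density is exactly the standard proof of the cited fact.
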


In the introduction we defined for a graphon $W$ and a $W$-invariant algebra $\mathcal{C}\in \Theta_\mu$ a graphon $W_\mathcal{C}$ as the conditional expectation of $W$ given $\mathcal{C}\times \mathcal{C}$.
Here, we slightly abuse the notation and define $W_\mathcal{C}$ as the conditional expectation of $W$ given $\mathcal{B}\times \mathcal{C}$, i.e.,
$$W_{\mathcal{C}}=\mathbb{E}\left(W|\mathcal{B}\times \mathcal{C}\right),$$
for every integral kernel $W$ and any $\mathcal{C}\in \Theta_\mu$.
We show in Claim~\ref{cl:Basic 1} that for graphons the assumption that the algebra is invariant implies that these definitions are the same.

\begin{claim}\label{cl:How quotient acts}
Let $\mathcal{C}\in \Theta_\mu$.
Then $T_{W_\mathcal{C}}=T_W\circ \mathbb{E}({-}|\mathcal{C})$.
In particular, $T_W\upharpoonright L^2(X,\mathcal{C},\mu)=T_{W_\mathcal{C}}\upharpoonright L^2(X,\mathcal{C},\mu)$.
\end{claim}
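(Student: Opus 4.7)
The plan is to verify the operator identity $T_{W_\mathcal{C}} = T_W \circ \mathbb{E}({-}|\mathcal{C})$ weakly, that is, by showing that $\langle T_{W_\mathcal{C}} f, h\rangle = \langle T_W \mathbb{E}(f|\mathcal{C}), h\rangle$ for all $f, h \in L^2(X,\mu)$. By Fubini (applicable since $W$ and $W_\mathcal{C}$ are bounded and $\mu$ is a probability measure), each side can be expressed as an integral over $(X\times X, \mu\times\mu)$ of $W'(x,y)\, h(x)\, g(y)$ for the appropriate choice of $W'\in \{W, W_\mathcal{C}\}$ and $g\in \{f, \mathbb{E}(f|\mathcal{C})\}$.

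The argument then proceeds in two standard steps. First, the test function $(x,y)\mapsto h(x)\,\mathbb{E}(f|\mathcal{C})(y)$ is $(\mathcal{B}\times\mathcal{C})$-measurable, so the very definition of $W_\mathcal{C}=\mathbb{E}(W\mid \mathcal{B}\times\mathcal{C})$ yields
\[
\int W(x,y)\, h(x)\, \mathbb{E}(f|\mathcal{C})(y)\, d(\mu\times\mu) = \int W_\mathcal{C}(x,y)\, h(x)\, \mathbb{E}(f|\mathcal{C})(y)\, d(\mu\times\mu).
\]
Second, for $\mu$-a.e.\ $x$ the slice $y\mapsto W_\mathcal{C}(x,y)$ is $\mathcal{C}$-measurable, which is a standard consequence of joint $(\mathcal{B}\times\mathcal{C})$-measurability (first verified on rectangles, then extended by a monotone class argument). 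Hence the one-variable defining property of $\mathbb{E}(f|\mathcal{C})$ applied slice-wise lets me replace $\mathbb{E}(f|\mathcal{C})(y)$ by $f(y)$ in the right-hand integral above. Integrating in $x$ and combining with step one gives $\langle T_W \mathbb{E}(f|\mathcal{C}), h\rangle = \langle T_{W_\mathcal{C}} f, h\rangle$, and non-degeneracy of the inner product on $L^2(X,\mu)$ then forces $T_W\circ \mathbb{E}({-}|\mathcal{C}) = T_{W_\mathcal{C}}$.

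The in-particular statement is immediate: if $f\in L^2(X,\mathcal{C},\mu)$ then by Claim~\ref{cl:Basic 0} we have $\mathbb{E}(f|\mathcal{C})=f$, so $T_W f = T_W \mathbb{E}(f|\mathcal{C}) = T_{W_\mathcal{C}} f$. I do not anticipate any real obstacle here; the proof relies only on Fubini, the slice-measurability of a $(\mathcal{B}\times\mathcal{C})$-measurable function, and the defining properties of conditional expectations on both $X$ and $X\times X$, all of which are routine in this bounded, finite-measure setting.
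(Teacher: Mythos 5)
Your proof is correct, but it is organized differently from the paper's. The paper splits $L^2(X,\mu)$ into $L^2(X,\mathcal{C},\mu)$ and its orthogonal complement: on the first summand it verifies the identity on indicators ${\bf 1}_A$, $A\in\mathcal{C}$, tested against ${\bf 1}_B$, $B\in\mathcal{B}$, using Theorem~\ref{th:ConditionalExp}~(3) for the set $B\times A\in\mathcal{B}\times\mathcal{C}$ and then extends by density and linearity; on the orthocomplement it shows $T_{W_\mathcal{C}}$ vanishes outright, via the auxiliary function $F(x,y)=f(y)$, the observation that $\mathbb{E}(F|\mathcal{B}\times\mathcal{C})=0$, and the duality property Theorem~\ref{th:ConditionalExp}~(2). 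You instead prove the weak identity $\langle T_{W_\mathcal{C}}f,h\rangle=\langle T_W\mathbb{E}(f|\mathcal{C}),h\rangle$ for arbitrary $f,h\in L^2(X,\mu)$ in one computation: first the projection duality on $L^2(X\times X,\mu\times\mu)$ against the $(\mathcal{B}\times\mathcal{C})$-measurable test function $h\otimes\mathbb{E}(f|\mathcal{C})$, then the slice-wise $\mathcal{C}$-measurability of $W_\mathcal{C}(x,{-})$ together with self-adjointness of $\mathbb{E}({-}|\mathcal{C})$ to replace $\mathbb{E}(f|\mathcal{C})$ by $f$. This avoids both the orthogonal decomposition and the density-of-indicators step, at the price of the slice-measurability lemma, which the paper never needs; note that since $\mathcal{B}\times\mathcal{C}$ is here the relatively complete algebra, you should first pass to a genuinely $(\mathcal{B}\otimes\mathcal{C})$-measurable version of $W_\mathcal{C}$, after which Fubini gives $\mathcal{C}$-measurability of $W_\mathcal{C}(x,{-})$ only for $\mu$-almost every $x$ (which you correctly state, and which suffices because $\mathcal{C}$ is relatively complete). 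Your integrability justifications (boundedness of $W$, $W_\mathcal{C}$ and $f,h\in L^2\subseteq L^1$ on a probability space) are adequate, and the "in particular" part via Claim~\ref{cl:Basic 0} matches the paper. In short: the paper's route stays entirely within the listed properties of conditional expectation, while yours is more compact and uniform but leans on a standard Fubini/slice argument.
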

\begin{proof}
Let $A\in \mathcal{C}$ and $B\in \mathcal{B}$.
Then we have
\begin{equation*}
\begin{split}
\left\langle T_{W_{\mathcal{C}}}({\bf 1}_A),{\bf 1}_B\right\rangle = & \ \int_{B\times A} W_\mathcal{C} \ d(\mu \times \mu) = \ \int_{B\times A} W \ d(\mu\times \mu) \\
= & \ \left\langle T_W({\bf 1}_A),{\bf 1}_B\right \rangle = \left\langle \left(T_W\circ \mathbb{E}({-}|\mathcal{C})\right)({\bf 1}_A),{\bf 1}_B\right \rangle,
\end{split}
\end{equation*}
where we used Theorem~\ref{th:ConditionalExp}~(3) in the second equality.
Since linear hulls of $\{{\bf 1}_A\}_{A\in \mathcal{C}}$ and $\{{\bf 1}_B\}_{B\in \mathcal{B}}$ are dense in $L^2(X,\mathcal{C},\mu)$ and $L^2(X,\mu)$, respectively, we get that the claim holds for every $f\in L^2(X,\mathcal{C},\mu)$.

Let $f\in L^2(X,\mathcal{C},\mu)^\bot$ and $B\in \mathcal{B}$.
Define $F(x,y)=f(y)$ and note that $\mathbb{E}(F|\mathcal{B}\times \mathcal{C})=0$ by Fubini's Theorem.
We have
\begin{equation*}
\begin{split}
\left\langle T_{W_{\mathcal{C}}}(f),{\bf 1}_B\right\rangle = & \ \int_{B\times X} W_\mathcal{C}(x,y)f(y) \ d(\mu \times \mu)(x,y) = \int_{B\times X} W_\mathcal{C}(x,y)F(x,y) \ d(\mu \times \mu)(x,y)\\
= & \ \int_{B\times X} W(x,y)\mathbb{E}(F|\mathcal{B}\times \mathcal{C})(x,y) \ d(\mu\times \mu)(x,y)=0,
\end{split}
\end{equation*}
where we used Theorem~\ref{th:ConditionalExp}~(2) in the third equality.
This implies that $T_{W_\mathcal{C}}(f)=0$ and the proof is finished.
\end{proof}

We say that $\mathcal{C}\in \Theta_\mu$ is {\it $W$-invariant} if $L^2(X,\mathcal{C},\mu)$ is $W$-invariant, i.e., if $T_W(L^2(X,\mathcal{C},\mu))\subseteq L^2(X,\mathcal{C},\mu)$.
Equivalently by Claim~\ref{cl:How quotient acts}, we have
$$ T_{W_\mathcal{C}}\circ \mathbb{E}({-}|\mathcal{C})=T_W\circ \mathbb{E}({-}|\mathcal{C})=\mathbb{E}({-}|\mathcal{C})\circ T_W\circ \mathbb{E}({-}|\mathcal{C})=\mathbb{E}({-}|\mathcal{C})\circ T_{W_\mathcal{C}},$$
i.e., $T_{W_\mathcal{C}}$ commutes with $\mathbb{E}({-}|\mathcal{C})$.

\begin{claim}\label{cl:Basic 1}
Let $\mathcal{C}\in \Theta_\mu$ be $W$-invariant.
Then $W_\mathcal{C}=\mathbb{E}(W|\mathcal{B}\times \mathcal{C})=\mathbb{E}(W|\mathcal{C}\times \mathcal{C})$.
Moreover, if $W$ is a graphon, then so is $W_\mathcal{C}$.
\end{claim}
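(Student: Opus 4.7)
The plan is to deduce the claim from a single operator identity that the paragraph preceding the claim essentially gives away: when $\mathcal{C}$ is $W$-invariant,
\begin{equation*}
T_{W_\mathcal{C}}=\mathbb{E}({-}|\mathcal{C})\circ T_{W_\mathcal{C}}\circ\mathbb{E}({-}|\mathcal{C}).
\end{equation*}
This follows from Claim~\ref{cl:How quotient acts} (which gives $T_{W_\mathcal{C}}=T_W\circ\mathbb{E}({-}|\mathcal{C})$ and hence $T_{W_\mathcal{C}}\circ\mathbb{E}({-}|\mathcal{C})=T_{W_\mathcal{C}}$) combined with the commutation $T_{W_\mathcal{C}}\circ\mathbb{E}({-}|\mathcal{C})=\mathbb{E}({-}|\mathcal{C})\circ T_{W_\mathcal{C}}$ already noted in the paper. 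Morally the identity says ``$W_\mathcal{C}$ is $(\mathcal{C}\times\mathcal{C})$-measurable''; the main task is to translate it back to the kernel side.

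For the first equality, the tower property gives $\mathbb{E}(W|\mathcal{C}\times\mathcal{C})=\mathbb{E}(W_\mathcal{C}|\mathcal{C}\times\mathcal{C})$, so it is enough to show that $W_\mathcal{C}$ is already $(\mathcal{C}\times\mathcal{C})$-measurable, i.e., $W_\mathcal{C}=\mathbb{E}(W_\mathcal{C}|\mathcal{C}\times\mathcal{C})$. I would verify this on rectangles $B\times A$ with $A,B\in\mathcal{B}$: the left-hand side of
\begin{equation*}
\int_{B\times A}W_\mathcal{C}\,d(\mu\times\mu)=\int_{B\times A}\mathbb{E}(W_\mathcal{C}|\mathcal{C}\times\mathcal{C})\,d(\mu\times\mu)
\end{equation*}
equals $\langle T_{W_\mathcal{C}}({\bf 1}_A),{\bf 1}_B\rangle$, which by the operator identity above equals $\langle T_{W_\mathcal{C}}(\mathbb{E}({\bf 1}_A|\mathcal{C})),\mathbb{E}({\bf 1}_B|\mathcal{C})\rangle$, and this by Fubini (using $\mathbb{E}({\bf 1}_{B\times A}|\mathcal{C}\times\mathcal{C})=\mathbb{E}({\bf 1}_B|\mathcal{C})\otimes\mathbb{E}({\bf 1}_A|\mathcal{C})$) is precisely the right-hand side. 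Since rectangles generate $\mathcal{B}\times\mathcal{B}$, the equality holds as elements of $L^2$.

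For the ``moreover'' part, using $(\mathcal{C}\times\mathcal{C})$-measurability of $W_\mathcal{C}$ from the first part, it suffices to check $\int_{B\times A}W_\mathcal{C}=\int_{A\times B}W_\mathcal{C}$ for all $A,B\in\mathcal{C}$. This is immediate from symmetry of $W$:
\begin{equation*}
\int_{B\times A}W_\mathcal{C}=\int_{B\times A}W=\int_{A\times B}W=\int_{A\times B}W_\mathcal{C},
\end{equation*}
since both $B\times A$ and $A\times B$ lie in $\mathcal{B}\times\mathcal{C}$, so the defining identity of $W_\mathcal{C}=\mathbb{E}(W|\mathcal{B}\times\mathcal{C})$ applies. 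The only subtlety I would flag is not to conflate the $(\mathcal{B}\times\mathcal{C})$-measurability that $W_\mathcal{C}$ has by definition with the stronger $(\mathcal{C}\times\mathcal{C})$-measurability produced by the first part; keeping track of which rectangles generate which sub-$\sigma$-algebra should be sufficient, and I do not foresee a serious obstacle.
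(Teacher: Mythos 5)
Your proposal is correct, but it is organized differently from the paper's proof, so a brief comparison is in order. The paper sets $U=\mathbb{E}(W|\mathcal{C}\times\mathcal{C})$ and proves the operator identity $T_{W_\mathcal{C}}=T_U$, splitting $L^2(X,\mu)$ into $L^2(X,\mathcal{C},\mu)$ and its orthogonal complement: on the former it tests against ${\bf 1}_A,{\bf 1}_B$ with $A,B\in\mathcal{C}$ (invariance guaranteeing that both images are $\mathcal{C}$-measurable), and on the latter both operators vanish by the Fubini argument from Claim~\ref{cl:How quotient acts}; equality of kernels then follows because kernels with equal integral operators coincide, and the ``moreover'' part is read off from self-adjointness of $T_{W_\mathcal{C}}$ via Claim~\ref{cl:Self Adjoint=Graphon}. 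You instead prove the kernel-level statement $W_\mathcal{C}=\mathbb{E}(W_\mathcal{C}|\mathcal{C}\times\mathcal{C})$ directly, using the sandwich identity $T_{W_\mathcal{C}}=\mathbb{E}({-}|\mathcal{C})\circ T_{W_\mathcal{C}}\circ\mathbb{E}({-}|\mathcal{C})$ (which does follow, as you say, from Claim~\ref{cl:How quotient acts} together with the commutation recorded after it, with invariance entering exactly there), the tower property, and testing against all rectangles $B\times A$ with $A,B\in\mathcal{B}$; this avoids treating $L^2(X,\mathcal{C},\mu)^\bot$ separately, but at the price of invoking the product factorization $\mathbb{E}({\bf 1}_{B\times A}|\mathcal{C}\times\mathcal{C})=\mathbb{E}({\bf 1}_B|\mathcal{C})\otimes\mathbb{E}({\bf 1}_A|\mathcal{C})$, a standard fact not isolated in the paper (though entirely in the spirit of the Fubini step in Claim~\ref{cl:How quotient acts}), and of a $\pi$-system argument to pass from rectangles to almost-everywhere equality. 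Your ``moreover'' part is also different in flavor: a direct symmetry check of the kernel on $\mathcal{C}$-rectangles (legitimate, since both $W_\mathcal{C}$ and its flip are $(\mathcal{C}\times\mathcal{C})$-measurable by the first part) rather than the paper's appeal to self-adjointness of the operator. Both routes use the same essential input — invariance expressed through Claim~\ref{cl:How quotient acts} and the defining property of conditional expectation — and both are complete; the paper's stays entirely within its previously established claims, while yours is slightly more self-contained on the measure-theoretic side once the factorization lemma is granted.
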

\begin{proof}
Let $U=\mathbb{E}(W|\mathcal{C}\times \mathcal{C})$ and $A,B\in \mathcal{C}$.
We have 
\begin{equation*}
\begin{split}
\left\langle T_{W_{\mathcal{C}}}({\bf 1}_A),{\bf 1}_B\right\rangle = & \ \int_{B\times A} W_\mathcal{C} \ d(\mu \times \mu) = \int_{B\times A} W \ d(\mu \times \mu)\\
= & \ \int_{B\times A} \mathbb{E}(W|\mathcal{C}\times \mathcal{C}) \ d(\mu\times \mu)=\left\langle T_{U}({\bf 1}_A),{\bf 1}_B\right\rangle
\end{split}
\end{equation*}
by Theorem~\ref{th:ConditionalExp}~(3).
The assumption that $\mathcal{C}$ is $W$-invariant implies that $T_{W_\mathcal{C}}(f)=T_U(f)$ for every $f\in L^2(X,\mathcal{C},\mu)$.
It follows from Claim~\ref{cl:How quotient acts} that $T_{W_{\mathcal{C}}}(f)=0$ whenever $f\in L^2(X,\mathcal{C},\mu)^\bot$ and it is easy to see that the same argument as in the proof of Claim~\ref{cl:How quotient acts} shows that the same holds for $T_U$.
Then we have $T_{W_\mathcal{C}}=T_U$ and consequently $W_{\mathcal{C}}=U$.
The additional part follows easily by Claim~\ref{cl:Self Adjoint=Graphon}.
\end{proof}

Taking conditional expectation can be reformulated in the language of quotient spaces.
First we recall Theorem~\ref{th:Quotients and Markov}.
For every $\mathcal{C}\in \Theta_\mu$ there is a standard Borel space $(X/\mathcal{C},\mathcal{C}')$, a probability measure $\mu/\mathcal{C}\in \mathcal{P}(X/\mathcal{C})$ and a Borel map $q_\mathcal{C}:X\to X/\mathcal{C}$ such that $\mu/\mathcal{C}$ is the push-forward of $\mu$ via $q_\mathcal{C}$.
Moreover there is a unique linear isometry 
$$I_\mathcal{C}:L^2(X/\mathcal{C},\mu/\mathcal{C})\to L^2(X,\mu)$$
defined as
$$I_\mathcal{C}(f)(x)=f(q_\mathcal{C}(x))$$
that is a Markov operator onto $L^2(X,\mathcal{C},\mu)$.
If we write $S_\mathcal{C}$ for the adjoint of $I_\mathcal{C}$, then $S_\mathcal{C}$ is a Markov operator, $S_\mathcal{C}\upharpoonright L^2(X,\mathcal{C},\mu)$ is an isometrical isomorphism and $S_\mathcal{C}=S_{\mathcal{C}}\circ \mathbb{E}({-}|\mathcal{C})$.
It follows that $S_\mathcal{C}\circ I_\mathcal{C}$ is the identity on $L^2(X/\mathcal{C},\mu/\mathcal{C})$ and $I_\mathcal{C}\circ S_\mathcal{C}$ is equal to $\mathbb{E}({-}|\mathcal{C})$.

\begin{definition}
Let $\mathcal{C}\in \Theta_\mu$ be $W$-invariant.
We define $W/\mathcal{C}=S_{\mathcal{C}\times \mathcal{C}}(W_\mathcal{C})$.
\end{definition}

Formally, $W/\mathcal{C}$ is defined on the space $(X\times X)/(\mathcal{C}\times \mathcal{C})$ but it can be easily verified that there is a measure preserving bijection
$$i:(X\times X)/(\mathcal{C}\times \mathcal{C})\to (X/ \mathcal{C})\times (X/\mathcal{C})$$
such that $(i\circ q_{\mathcal{C}\times \mathcal{C}})(x,y)=(q_{\mathcal{C}}(x),q_{\mathcal{C}}(y))$ for $(\mu\times \mu)$-almost every $(x,y)\in X\times X$.
Therefore, we abuse the notation and assume that $W/\mathcal{C}$ is defined on $X/\mathcal{C}\times X/\mathcal{C}$.
Consequently by Claim~\ref{cl:Basic 1}, we have $I_{\mathcal{C}\times \mathcal{C}}(W/\mathcal{C})=W_\mathcal{C}$ and
$$W_\mathcal{C}(x,y)=(W/\mathcal{C})(q_{\mathcal{C}}(x),q_{\mathcal{C}}(y))$$
for $(\mu\times \mu)$-almost every $(x,y)\in X\times X$.

\begin{proposition}\label{pr:Quotient and Invariant algebra}
Let $W$ be an integral kernel and $\mathcal{C}\in \Theta_\mu$ be $W$-invariant.
Then
\begin{enumerate}
	\item[(i)] if $W$ is a graphon, then $W/\mathcal{C}$ is a graphon. Furthermore, $W_\mathcal{C}$ and $W/\mathcal{C}$ are weakly isomorphic,
	\item[(ii)] $T_{W/\mathcal{C}}\circ S_\mathcal{C}=S_\mathcal{C}\circ T_{W_\mathcal{C}}$,
	\item[(iii)] if $W$ is a graphon, then we have $T_{W/\mathcal{C}}\circ S_\mathcal{C}=S_\mathcal{C}\circ T_{W}$.
\end{enumerate}
\end{proposition}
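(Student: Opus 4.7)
The plan is to establish the three items in order, leveraging the pointwise identity $W_\mathcal{C}(x,y) = (W/\mathcal{C})(q_\mathcal{C}(x), q_\mathcal{C}(y))$ recorded just before the proposition, together with Claim \ref{cl:How quotient acts} and the basic relations $I_\mathcal{C}\circ S_\mathcal{C}=\mathbb{E}({-}|\mathcal{C})$ and $S_\mathcal{C}\circ I_\mathcal{C}=\operatorname{id}$ between the Markov operators associated with $\mathcal{C}$.

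\textbf{For (i).} If $W$ is a graphon, Claim \ref{cl:Basic 1} asserts that $W_\mathcal{C}$ is a graphon. Applying the pointwise identity at both $(x,y)$ and $(y,x)$, and using that the pushforward of $\mu\times\mu$ under $(x,y)\mapsto (q_\mathcal{C}(x),q_\mathcal{C}(y))$ equals $(\mu/\mathcal{C})\times(\mu/\mathcal{C})$, one concludes that $W/\mathcal{C}$ is symmetric $(\mu/\mathcal{C}\times\mu/\mathcal{C})$-almost everywhere. For weak isomorphism, the same identity together with a change of variables along the measure-preserving $q_\mathcal{C}$ gives $t(F,W_\mathcal{C})=t(F,W/\mathcal{C})$ for every finite graph $F$; equality of all such densities is equivalent to $\delta_\Box(W_\mathcal{C},W/\mathcal{C})=0$.

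\textbf{For (ii).} I would apply the isometry $I_\mathcal{C}$ (which is injective) on the left of both sides and compare. For $f\in L^2(X,\mu)$, writing $I_\mathcal{C}(T_{W/\mathcal{C}}(S_\mathcal{C}(f)))(x)=T_{W/\mathcal{C}}(S_\mathcal{C}(f))(q_\mathcal{C}(x))$ as an integral against $(W/\mathcal{C})(q_\mathcal{C}(x),{-})$ and then changing variables along $q_\mathcal{C}$ yields
\[
I_\mathcal{C}(T_{W/\mathcal{C}}(S_\mathcal{C}(f)))(x)=\int_X W_\mathcal{C}(x,y)\,I_\mathcal{C}(S_\mathcal{C}(f))(y)\,d\mu(y)=T_{W_\mathcal{C}}(\mathbb{E}(f|\mathcal{C}))(x)=T_{W_\mathcal{C}}(f)(x),
\]
where the last equality uses Claim \ref{cl:How quotient acts} (which gives $T_{W_\mathcal{C}}=T_W\circ \mathbb{E}({-}|\mathcal{C})$ and hence $T_{W_\mathcal{C}}\circ \mathbb{E}({-}|\mathcal{C})=T_{W_\mathcal{C}}$). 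On the other hand, $I_\mathcal{C}(S_\mathcal{C}(T_{W_\mathcal{C}}(f)))=\mathbb{E}(T_{W_\mathcal{C}}(f)|\mathcal{C})=T_{W_\mathcal{C}}(f)$, since $T_{W_\mathcal{C}}(f)=T_W(\mathbb{E}(f|\mathcal{C}))$ lies in $L^2(X,\mathcal{C},\mu)$ by $W$-invariance of $\mathcal{C}$. Injectivity of $I_\mathcal{C}$ then gives the desired identity.

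\textbf{For (iii).} Combining (ii) with Claim \ref{cl:How quotient acts} yields $T_{W/\mathcal{C}}\circ S_\mathcal{C}=S_\mathcal{C}\circ T_W\circ \mathbb{E}({-}|\mathcal{C})$. Since $S_\mathcal{C}=S_\mathcal{C}\circ \mathbb{E}({-}|\mathcal{C})$, the operator $S_\mathcal{C}$ vanishes on $L^2(X,\mathcal{C},\mu)^\bot$, so (iii) reduces to showing that $T_W$ preserves $L^2(X,\mathcal{C},\mu)^\bot$. For a graphon $W$ the operator $T_W$ is self-adjoint by Claim \ref{cl:Self Adjoint=Graphon}, so Proposition \ref{pr:Invariant Subspace}(4) applied to the $W$-invariant subspace $L^2(X,\mathcal{C},\mu)$ delivers precisely this invariance.

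\textbf{Main obstacle.} The technically most delicate step is (ii), since it requires transporting operators across the two different $L^2$-spaces via $I_\mathcal{C}$ and $S_\mathcal{C}$ and correctly interlacing the change of variables along $q_\mathcal{C}$, the pointwise identity for $W_\mathcal{C}$, and Claim \ref{cl:How quotient acts}. A secondary subtlety is the weak-isomorphism assertion in (i), where the two graphons live on different base spaces and one must invoke the extension of the characterization $\delta_\Box=0\Leftrightarrow t(F,{-})\text{ agree for all }F$ to this broader setting.
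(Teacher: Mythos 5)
Your proof is correct and follows essentially the same route as the paper's: all three parts rest on the pull-back identity $W_\mathcal{C}(x,y)=(W/\mathcal{C})(q_\mathcal{C}(x),q_\mathcal{C}(y))$, the relations among $I_\mathcal{C}$, $S_\mathcal{C}$ and $\mathbb{E}({-}|\mathcal{C})$, the change of variables along $q_\mathcal{C}$, and Proposition~\ref{pr:Invariant Subspace} for (iii). The only cosmetic difference is in (ii): you verify the operator identity by composing with the injective isometry $I_\mathcal{C}$ and computing directly (using $W$-invariance to place $T_{W_\mathcal{C}}(f)$ in $L^2(X,\mathcal{C},\mu)$), whereas the paper checks the same identity weakly, pairing against $h_1=S_\mathcal{C}(f_1)$ and using $S_\mathcal{C}=I_\mathcal{C}^*$.
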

\begin{proof}
{\bf (i)} It follows from the remark before this proposition that $W_\mathcal{C}$ is a pull-back of $W/\mathcal{C}$.
This implies easily both claims in (i).

{\bf (ii)} If $f\in L^2(X,\mathcal{C},\mu)^\bot$, then the equality clearly holds.
Suppose that $f_0,f_1\in L^2(X,\mathcal{C},\mu)$.
By the definition, we find $h_0,h_1\in L^2(X/\mathcal{C},\mu/\mathcal{C})$ such that $I_\mathcal{C}(h_i)=f_i$ and $S_\mathcal{C}(f_i)=h_i$ for $i\in \{0,1\}$.
Then we have
\begin{equation*}
\begin{split}
\left\langle \left(T_{W/\mathcal{C}}\circ S_\mathcal{C}\right)(f_0),h_1\right\rangle = & \ \left\langle T_{W/\mathcal{C}}(h_0),h_1\right\rangle \\
= & \ \int_{(X/\mathcal{C})\times (X/\mathcal{C})} h_1(r)(W/\mathcal{C})(r,s)h_0(s) \ d((\mu/\mathcal{C})\times (\mu/\mathcal{C}))(r,s) \\
= & \ \int_{X\times X} f_1(x)W_{\mathcal{C}}(x,y)f_0(y) \ d(\mu\times \mu)(x,y)=\left\langle T_{W_\mathcal{C}}(f_0),f_1\right\rangle \\
= & \ \left\langle T_{W_\mathcal{C}}(f_0),I_\mathcal{C}(h_1)\right\rangle=\left\langle \left(S_\mathcal{C}\circ T_{W_\mathcal{C}}\right)(f_0),h_1\right\rangle
\end{split}
\end{equation*}
and the claim follows.

{\bf (iii)} Proposition~\ref{pr:Invariant Subspace} implies that $T_W$ commutes with $\mathbb{E}({-}|\mathcal{C})$.
By (ii) and Claim~\ref{cl:How quotient acts}, we have
$$T_{W/\mathcal{C}}\circ S_\mathcal{C}=S_\mathcal{C}\circ T_{W_\mathcal{C}}=S_\mathcal{C}\circ T_W\circ \mathbb{E}({-}|\mathcal{C})=S_\mathcal{C}\circ \mathbb{E}({-}|\mathcal{C})\circ T_W=S_\mathcal{C}\circ T_W$$
and the proof is finished.
\end{proof}

\subsection{The minimum invariant sub-$\sigma$-algebra}

Let $W$ be an integral kernel on $X$.
We show in this section that there is the minimum $W$-invariant relatively complete sub-$\sigma$-algebra and that it admits a canonical description.
First we need to introduce some auxiliary notion.

\begin{definition}
Let $\mathcal{D},\mathcal{E}\in \Theta_\mu$.
We say that $(\mathcal{D},\mathcal{E})$ is a \emph{$W$-invariant pair} if
$$T_W(L^2(X,\mathcal{D},\mu))\subseteq L^2(X,\mathcal{E},\mu).$$
\end{definition}

Note that $\mathcal{C}\in \Theta_\mu$ is $W$-invariant if and only if $(\mathcal{C},\mathcal{C})$ is a $W$-invariant pair.
Given $\mathcal{C}\in \Theta_\mu$ define $\Phi$ to be the collection of $\mathcal{D}\in \Theta_\mu$ such that $(\mathcal{C},\mathcal{D})$ is a $W$-invariant pair.
Then $\Phi$ is non-empty because $\mathcal{B}\in \Phi$.
By Claim~\ref{cl:Intersections of rel compl algebras}, we have 
$$m(\mathcal{C})=\left\{Z\in \mathcal{B}:\forall \mathcal{D}\in \Phi, \ Z\in \mathcal{D}\right\}\in \Theta_\mu.$$
The following is straightforward.

\begin{claim}\label{cl:invariantPair}
Let $\mathcal{C}\in \Theta_\mu$.
Then $(\mathcal{C},m(\mathcal{C}))$ is a $W$-invariant pair.
\end{claim}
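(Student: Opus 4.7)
The plan is to unwind the definitions and show directly that $T_W\bigl(L^2(X,\mathcal{C},\mu)\bigr)\subseteq L^2(X,m(\mathcal{C}),\mu)$. Fix $f\in L^2(X,\mathcal{C},\mu)$. For every $\mathcal{D}\in \Phi$ the pair $(\mathcal{C},\mathcal{D})$ is $W$-invariant by definition, so $T_W(f)\in L^2(X,\mathcal{D},\mu)$; thus $T_W(f)$ has a $\mathcal{D}$-measurable representative, and since $\mathcal{D}\in \Theta_\mu$ is relatively complete every representative is $\mathcal{D}$-measurable, i.e.\ $T_W(f)$ is $\mathcal{D}$-measurable as an equivalence class.

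The key observation is that $m(\mathcal{C})=\bigcap_{\mathcal{D}\in \Phi}\mathcal{D}$ as a collection of subsets of $X$, and a function is measurable with respect to an intersection of $\sigma$-algebras precisely when it is measurable with respect to each one (apply the criterion to preimages of Borel sets on the real line). Combining this with the previous paragraph gives that $T_W(f)$ is $m(\mathcal{C})$-measurable, hence $T_W(f)\in L^2(X,m(\mathcal{C}),\mu)$, which is exactly the assertion that $(\mathcal{C},m(\mathcal{C}))$ is a $W$-invariant pair.

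I do not expect any serious obstacle: the argument is essentially a bookkeeping check, and the only subtlety—namely the interaction between a.e.\ equivalence and $\sigma$-algebra membership—is handled cleanly by the relative completeness of each $\mathcal{D}\in \Phi$ and of $m(\mathcal{C})$ itself, the latter being guaranteed by Claim~\ref{cl:Intersections of rel compl algebras}. The content of the statement is thus that the minimal candidate target, obtained by intersecting all valid targets, remains a valid target.
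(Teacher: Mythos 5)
Your argument is correct and is exactly the straightforward unwinding the paper has in mind (the paper omits the proof, calling the claim straightforward): fix one $\mathcal{B}$-measurable representative of $T_W(f)$, use relative completeness of each $\mathcal{D}\in\Phi$ to see that this single representative is $\mathcal{D}$-measurable for every $\mathcal{D}\in\Phi$, and then pass to the intersection $m(\mathcal{C})=\bigcap_{\mathcal{D}\in\Phi}\mathcal{D}$ via preimages of Borel sets. Nothing further is needed.
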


\begin{definition}[Canonical sequence $\left\{\mathcal{C}^W_n\right\}_{n\in \mathbb{N}}$]
Define $\mathcal{C}^W_0=\langle\{\emptyset,X\}\rangle$ and inductively $\mathcal{C}^W_{n+1}=m\left(\mathcal{C}^W_n\right)$.
Furthermore, we define
$$\mathcal{C}(W)=\left\langle\bigcup_{n\in \mathbb{N}}\mathcal{C}^W_n\right\rangle.$$
\end{definition}

\begin{proposition}
Let $W$ be an integral kernel.
Then $\mathcal{C}(W)$ is the minimum $W$-invariant relatively complete sub-$\sigma$-algebra of $\mathcal{B}$. 
\end{proposition}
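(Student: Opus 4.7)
The plan is to prove two things: that $\mathcal{C}(W)$ is itself $W$-invariant, and that it sits inside every $W$-invariant relatively complete sub-$\sigma$-algebra. Both parts hinge on first establishing that the canonical sequence is nested, i.e., $\mathcal{C}^W_n \subseteq \mathcal{C}^W_{n+1}$ for every $n$.

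I would prove monotonicity of $\{\mathcal{C}^W_n\}_{n \in \mathbb{N}}$ by induction. The base case is immediate since $\mathcal{C}^W_0 = \langle\{\emptyset,X\}\rangle$ is contained in every relatively complete sub-$\sigma$-algebra. For the inductive step, note the following monotonicity of $W$-invariant pairs: if $\mathcal{C}' \subseteq \mathcal{C}''$ then $L^2(X,\mathcal{C}',\mu) \subseteq L^2(X,\mathcal{C}'',\mu)$, so whenever $(\mathcal{C}'',\mathcal{D})$ is a $W$-invariant pair, so is $(\mathcal{C}',\mathcal{D})$. Assuming $\mathcal{C}^W_n \subseteq \mathcal{C}^W_{n+1}$, applying this observation with $\mathcal{D} = m(\mathcal{C}^W_{n+1}) = \mathcal{C}^W_{n+2}$ gives that $(\mathcal{C}^W_n, \mathcal{C}^W_{n+2})$ is a $W$-invariant pair, and hence by minimality of $m(\mathcal{C}^W_n)$ we have $\mathcal{C}^W_{n+1} = m(\mathcal{C}^W_n) \subseteq \mathcal{C}^W_{n+2}$, closing the induction.

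For the $W$-invariance of $\mathcal{C}(W)$, Claim~\ref{cl:invariantPair} gives $T_W\left(L^2(X,\mathcal{C}^W_n,\mu)\right) \subseteq L^2(X,\mathcal{C}^W_{n+1},\mu) \subseteq L^2(X,\mathcal{C}(W),\mu)$ for every $n$. Since the $\mathcal{C}^W_n$ are nested and their union generates $\mathcal{C}(W)$, the subspace $\bigcup_n L^2(X,\mathcal{C}^W_n,\mu)$ is dense in $L^2(X,\mathcal{C}(W),\mu)$ (using density of the linear hull of indicator functions from the generating algebra, combined with a standard martingale/approximation argument to absorb the relative completion). Because $T_W$ is continuous and $L^2(X,\mathcal{C}(W),\mu)$ is closed by Claim~\ref{cl:Basic 0}, this inclusion extends to all of $L^2(X,\mathcal{C}(W),\mu)$, proving that $(\mathcal{C}(W),\mathcal{C}(W))$ is a $W$-invariant pair.

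For minimality, let $\mathcal{C} \in \Theta_\mu$ be any $W$-invariant sub-$\sigma$-algebra. I will show $\mathcal{C}^W_n \subseteq \mathcal{C}$ by induction; the case $n=0$ is trivial. Assuming $\mathcal{C}^W_n \subseteq \mathcal{C}$, we have $L^2(X,\mathcal{C}^W_n,\mu)\subseteq L^2(X,\mathcal{C},\mu)$, and since $\mathcal{C}$ is $W$-invariant, $T_W\left(L^2(X,\mathcal{C}^W_n,\mu)\right) \subseteq L^2(X,\mathcal{C},\mu)$, so $(\mathcal{C}^W_n,\mathcal{C})$ is a $W$-invariant pair. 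By definition of $m(\mathcal{C}^W_n)$, this forces $\mathcal{C}^W_{n+1} \subseteq \mathcal{C}$. Since $\mathcal{C}$ is relatively complete, the relatively complete sub-$\sigma$-algebra generated by $\bigcup_n \mathcal{C}^W_n$ is also contained in $\mathcal{C}$, i.e., $\mathcal{C}(W) \subseteq \mathcal{C}$.

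The main subtlety I expect to handle carefully is the density step in the $W$-invariance argument: making sure that indicator functions of sets in $\bigcup_n \mathcal{C}^W_n$ are $L^2$-dense in $L^2(X,\mathcal{C}(W),\mu)$ despite the relative completion operation $\langle{-}\rangle$ potentially enlarging the algebra beyond the set-theoretic union. This is handled by the fact that enlarging a sub-$\sigma$-algebra by $\mu$-null manipulations does not change the associated $L^2$-space, so it suffices to invoke the standard approximation of $L^2$-functions measurable with respect to $\bigvee_n \mathcal{C}^W_n$ by functions measurable with respect to some $\mathcal{C}^W_n$.
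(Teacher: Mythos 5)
Your proof is correct and follows essentially the same route as the paper: minimality via the inductive observation that $(\mathcal{C}^W_n,\mathcal{C})$ is a $W$-invariant pair for any $W$-invariant $\mathcal{C}$, and invariance of $\mathcal{C}(W)$ via Claim~\ref{cl:invariantPair} combined with an $L^2$-approximation from the generating algebra $\bigcup_n \mathcal{C}^W_n$, continuity of $T_W$, and closedness of $L^2(X,\mathcal{C}(W),\mu)$. Your explicit monotonicity lemma for the sequence $\{\mathcal{C}^W_n\}$ is a welcome elaboration of a point the paper leaves implicit when it asserts that the union is an algebra.
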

\begin{proof}
Suppose that $\mathcal{C}\in \Theta_\mu$ is $W$-invariant.
Then we have trivially $\mathcal{C}^W_0\subseteq \mathcal{C}$ and by induction $\mathcal{C}^W_n\subseteq \mathcal{C}$ for every $n\in\mathbb{N}$.
This shows $\mathcal{C}(W)\subseteq \mathcal{C}$.

It remains to show that $\mathcal{C}(W)$ is $W$-invariant.
First note that $\bigcup_{n\in \mathbb{N}} \mathcal{C}^W_n$ is an algebra (not necessarily $\sigma$-algebra) that generates $\mathcal{C}(W)$.
By \cite[Exercise~17.43]{kechris1995classical}, we can find for each $A\in \mathcal{C}(W)$ a sequence $A_n\in \mathcal{C}^W_n$ such that ${\bf 1}_{A_n}\to {\bf 1}_A$ in $L^2(X,\mu)$.
By continuity of $T_W$, we have $T_W({\bf 1}_{A_n})\to T_W({\bf 1}_A)$ in $L^2(X,\mu)$ and, by Claim~\ref{cl:invariantPair}, we have $T_W({\bf 1}_{A_n})\in L^2\left(X,\mathcal{C}^W_{n+1},\mu\right)\subseteq L^2(X,\mathcal{C}(W),\mu)$.
Since $L^2(X,\mathcal{C}(W),\mu)$ is closed, by Claim~\ref{cl:Basic 0}, we have $T_W({\bf 1}_A)\in L^2(X,\mathcal{C}(W),\mu)$.
Since the linear hull of $\left\{{\bf 1}_A\right\}_{A\in \mathcal{C}(W)}$ is dense in $L^2(X,\mathcal{C}(W),\mu)$ and $T_W$ is linear and continuous we conclude that $\mathcal{C}(W)$ is $W$-invariant.
\end{proof}

\section{Distributions on iterated degree measures}\label{sec:DIDD}

In this section we define the compact metric space $\mathbb{M}$ whose elements are \emph{iterated degree measures}.
This definition is independent of $\mathcal{W}_0$.
We assign to a graphon $W$ on $X$ a Borel map $i_W:X\to  \mathbb{M}$ and a Borel probability measure $\nu_W$ on $\mathbb{M}$ that encodes the canonical sequence $\{\mathcal{C}^W_n\}_{n\in \mathbb{N}}$.
These measures are called \emph{distributions on iterated degree measures, DIDM}.
Lastly, we show that every DIDM $\nu$ encodes an integral kernel ${\bf U}[\nu]$ on $\mathbb{M}$ such that $W/\mathcal{C}(W)$ is isomorphic to ${\bf U}[\nu_W]$ for every graphon $W$.

\subsection{The Space $\mathbb{M}$}

For a compact metric space $K$ we denote as $\mathscr{M}_{\le 1}(K)$ the set of all Borel measures on $K$ of total mass at most $1$.
Moreover, we put $\mathscr{P}(K)$ for the set of all Borel probability measures on $K$, i.e., distributions on $K$, and we denote as $C(K,\mathbb{R})$ the space of all real-valued continuous functions on $K$.
It is a standard fact from functional analysis that $\mathscr{M}_{\le 1}(K)$ and $\mathscr{P}(K)$ are compact and metrizable when endowed with the weak* topology, see Appendix~\ref{App B}.

\begin{definition}
Let $P^0=\{\star\}$ be the one-point space and define inductively
$$\mathbb{M}_n=\prod_{i\le n} P^i \ \operatorname{and} \ P^{n+1}=\mathscr{M}_{\le 1}\left(\mathbb{M}_n\right)$$
for every $n\in \mathbb{N}$.
We put $\mathbb{M}=\mathbb{M}_\infty=\prod_{n\in \mathbb{N}} P^n$ and denote as $p_{n,k}:\mathbb{M}_k\to\mathbb{M}_n$ the canonical projection, where $n\le k\le \infty$.
\end{definition}

It is an easy consequence of the discussion above together with Tychonoff's Theorem, see~\cite[Theorem A3]{Rud2}, that $\mathbb{M}$ is a compact metric space.

A particularly interesting subspace of $\mathbb{M}$ consists of coherent sequences of measures.
Namely, define
$$\mathbb{P}=\left\{\alpha\in \mathbb{M}: \forall n\in \mathbb{N} \ \alpha(n+1)=(p_{n,n+1})_*\alpha(n+2)\right\},$$
where $(p_{n,n+1})_*\alpha(n+2)\in \mathscr{M}_{\le 1}(\mathbb{M}_{n})$ denotes the push-forward of $\alpha(n+2)\in \mathscr{M}_{\le 1}(\mathbb{M}_{n+1})$ via $p_{n,n+1}$, see Appendix~\ref{App A} for definition.
It follows from Kolmogorov's Existence Theorem \cite[Theorem~36.1]{billingsley} that for every $\alpha\in \mathbb{P}$ there is a unique $\mu_\alpha\in \mathscr{M}_{\le 1}(\mathbb{M})$ such that 
\begin{equation*}\label{eq:mu_alpha}
(p_{n,\infty})_*\mu_\alpha=\alpha(n+1).
\end{equation*}
for every $n\in \mathbb{N}$.
In fact, we have the following uniform version.

\begin{claim}\label{cl:uniformKolmogorov}
The set $\mathbb{P}$ is closed in $\mathbb{M}$ and the map $\alpha\mapsto \mu_\alpha$ that satisfies
$$(p_{n,\infty})_*\mu_\alpha=\alpha(n+1)$$
for every $n\in \mathbb{N}$ is a continuous map from $\mathbb{P}$ to $\mathscr{M}_{\le 1}(\mathbb{M})$.
\end{claim}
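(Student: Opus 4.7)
The plan splits into two parts: first I would verify that $\mathbb{P}$ is closed in $\mathbb{M}$, and second show the weak${}^*$-continuity of $\alpha\mapsto\mu_\alpha$ by combining the compactness of $\mathscr{M}_{\le 1}(\mathbb{M})$ with the uniqueness assertion already stated just before the claim. For closedness, fix $n\in\mathbb{N}$. The coordinate evaluation $\alpha\mapsto\alpha(n+1)$ is continuous from $\mathbb{M}$ to $P^{n+1}$ by definition of the product topology. The map $\alpha\mapsto (p_{n,n+1})_*\alpha(n+2)$ factors as the evaluation $\alpha\mapsto\alpha(n+2)$ followed by the pushforward operator $\beta\mapsto (p_{n,n+1})_*\beta$; the latter is weak${}^*$-continuous on $\mathscr{M}_{\le 1}(\mathbb{M}_{n+1})$ because for every $g\in C(\mathbb{M}_n,\mathbb{R})$ the composition $g\circ p_{n,n+1}$ is a bona fide continuous test function for the weak${}^*$ topology on the source. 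Since $P^{n+1}$ is Hausdorff, the locus where two continuous maps into it agree is closed, and $\mathbb{P}$ is the intersection of countably many such loci.

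For the continuity of $\alpha\mapsto\mu_\alpha$, take $\alpha_k\to\alpha$ in $\mathbb{P}$. Because $\mathscr{M}_{\le 1}(\mathbb{M})$ is weak${}^*$-compact and metrizable, it suffices to show that every weak${}^*$-convergent subsequence of $\{\mu_{\alpha_k}\}$ has limit $\mu_\alpha$. Fix such a subsequence $\mu_{\alpha_{k_j}}\to\nu$. For each fixed $n$, continuity of $(p_{n,\infty})_*$ (by the same pushforward argument as above) combined with the coordinate evaluation $\alpha_{k_j}\mapsto\alpha_{k_j}(n+1)$ gives
\[
(p_{n,\infty})_*\nu \;=\; \lim_j (p_{n,\infty})_*\mu_{\alpha_{k_j}} \;=\; \lim_j \alpha_{k_j}(n+1) \;=\; \alpha(n+1).
\]
Thus $\nu$ satisfies exactly the Kolmogorov consistency system that uniquely determines $\mu_\alpha$, so $\nu=\mu_\alpha$. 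This forces $\mu_{\alpha_k}\to\mu_\alpha$ in the full sequence.

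The only mild technical nuance is that Kolmogorov's theorem is classically phrased for probability measures while the $\alpha(n+1)$ lie in $\mathscr{M}_{\le 1}(\mathbb{M}_n)$. This is not a real obstacle: pushforward preserves total mass, so for $\alpha\in\mathbb{P}$ the value $t(\alpha)=\alpha(n+1)(\mathbb{M}_n)$ is independent of $n$. If $t(\alpha)=0$ then $\mu_\alpha=0$ and the claim is trivial; otherwise one normalizes each $\alpha(n+1)$ to a probability measure, applies classical Kolmogorov existence and uniqueness on the product, and rescales by $t(\alpha)$. Apart from this routine reduction, the proof is a pure continuity-and-uniqueness exercise, so I do not anticipate any substantive difficulty.
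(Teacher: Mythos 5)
Your proposal is correct. The closedness half is essentially the paper's argument in different clothing: the paper verifies sequentially, by testing against $f\in C(\mathbb{M}_n,\mathbb{R})$, that $\alpha(n+1)=(p_{n,n+1})_*\alpha(n+2)$ passes to limits, which is exactly your observation that $\alpha\mapsto\alpha(n+1)$ and $\alpha\mapsto(p_{n,n+1})_*\alpha(n+2)$ are continuous into the Hausdorff space $\mathscr{M}_{\le 1}(\mathbb{M}_n)$, so $\mathbb{P}$ is a countable intersection of closed agreement loci. For the continuity of $\alpha\mapsto\mu_\alpha$ you genuinely diverge from the paper: the paper argues directly, showing that the algebra $\mathcal{A}=\bigcup_n C(\mathbb{M}_n,\mathbb{R})\circ p_{n,\infty}$ of cylinder functions is uniformly dense in $C(\mathbb{M},\mathbb{R})$ by Stone--Weierstrass, and that $\int f\circ p_{n,\infty}\,d\mu_{\alpha_k}\to\int f\circ p_{n,\infty}\,d\mu_\alpha$ for each such test function, which (together with the uniform mass bound) yields weak* convergence; you instead exploit compactness and metrizability of $\mathscr{M}_{\le 1}(\mathbb{M})$ and run a subsequence argument, identifying any subsequential limit $\nu$ via $(p_{n,\infty})_*\nu=\alpha(n+1)$ and the uniqueness already asserted before the claim. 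Both routes are sound; the paper's buys an explicit dense family of test functions (which it reuses anyway, e.g.\ in the construction of $\mathcal{T}$ and Corollary~\ref{cor:1 implies 2}), while yours is shorter where it leans on soft compactness plus uniqueness, at the price of invoking the Kolmogorov-type uniqueness as a black box. Your closing remark on normalizing subprobability measures addresses the existence/uniqueness of $\mu_\alpha$, which is stated before the claim and is not itself part of what is being proved, so it is harmless but not needed.
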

\begin{proof}
Let $\left\{\alpha_k\right\}_{k\in \mathbb{N}}\subseteq \mathbb{P}$, $\alpha\in \mathbb{M}$ be such that $\alpha_k\to \alpha$ and $n\in \mathbb{N}$.
By the definition, we have $\alpha_{k}(n+2)\to \alpha(n+2)$ in $\mathscr{M}_{\le 1}(\mathbb{M}_{n+1})$ and $(p_{n,n+1})_*\alpha_k(n+2)=\alpha_k(n+1)\to \alpha(n+1)$ in $\mathscr{M}_{\le 1}(\mathbb{M}_n)$.
However, this implies
\begin{equation*}
\begin{split}
\int_{\mathbb{M}_n} f \ d\alpha_k(n+1)= & \ \int_{\mathbb{M}_{n+1}} f\circ p_{n,n+1} \ d\alpha_k(n+2) \\
\to & \  \int_{\mathbb{M}_{n+1}} f\circ p_{n,n+1} \ d\alpha(n+2)=\int_{\mathbb{M}_n} f \ d(p_{n,n+1})_*\alpha(n+2)
\end{split}
\end{equation*}
for every $f\in C(\mathbb{M}_{n},\mathbb{R})$.
This shows that $\alpha(n+1)=(p_{n,n+1})_*\alpha(n+2)$ and consequently that $\alpha\in \mathbb{P}$.

It follows from Theorem~\ref{th:StoneWeierstrass} that
$$\mathcal{A}=\bigcup_{n\in \mathbb{N}} C(\mathbb{M}_n,\mathbb{R})\circ p_{n,\infty}$$
is uniformly dense in $C(\mathbb{M},\mathbb{R})$.
Let $\alpha_k,\alpha\in \mathbb{P}$ for every $k\in \mathbb{N}$ such that $\alpha_k\to \alpha$ in $\mathbb{M}$ (or equivalently in $\mathbb{P}$).
This means by definition that $(p_{n,\infty})_*\mu_{\alpha_k}=\alpha_k(n+1)\to \alpha(n+1)=(p_{n,\infty})_*\mu_\alpha$ for every $n\in \mathbb{N}$.
Then we have
\begin{equation*}
\begin{split}
\int_\mathbb{M} f\circ p_{n,\infty} \ d\mu_{\alpha_k}= & \ \int_{\mathbb{M}_n} f \ d(p_{n,\infty})_*\mu_{\alpha_k} \\
\to & \  \int_{\mathbb{M}_n} f \ d(p_{n,\infty})_*\mu_{\alpha}=\int_\mathbb{M} f\circ p_{n,\infty} \ d\mu_\alpha
\end{split}
\end{equation*}
for every $f\in C(\mathbb{M}_n,\mathbb{R})$.
It follows from the the uniform density of $\mathcal{A}$ that $\mu_{\alpha_k}\to \mu_\alpha$ in $\mathscr{M}_{\le 1}(\mathbb{M})$.
\end{proof}

Finally we are ready to state the main definition of this section.
Note that in the definition, (2) makes sense by (1).

\begin{definition} We say that $\nu \in \mathscr{P}(\mathbb{M})$ is a \emph{distribution on iterated degree measures, DIDM}, if 
\begin{enumerate}
	\item $\nu(\mathbb{P})=1$,
	\item $\mu_\alpha$ is absolutely continuous with respect to $\nu$ with the corresponding Radon--Nikodym derivative satisfying $0\le \frac{d\mu_\alpha}{d\nu}\le 1$ for $\nu$-almost every $\alpha\in \mathbb{M}$.
\end{enumerate}
\end{definition}

\subsection{From Kernels to DIDM}

For a given integral kernel $W$ on $X$ we define inductively a map $i_W:X\to \mathbb{M}$ and show that $\nu_W$, the push-forward of $\mu$ via $i_W$, is a DIDM.
Compare the definition of $i_W$ with the informal definition given in the introduction.
Moreover, we show that $\mathcal{C}(W)$ is the minimum relatively complete sub-$\sigma$-algebra that makes $i_W$ measurable.

\begin{definition}
Let $(X,\mathcal{B})$ be a standard Borel space and $W$ be an integral kernel on $X$.
We define $i_{W,0}:X\to \mathbb{M}_0=\{\star\}$ to be the constant map.
Inductively, we define $i_{W,n+1}:X\to \mathbb{M}_{n+1}$   such that
\begin{itemize}
	\item[(a)] $i_{W,n+1}(x)(j)=i_{W,n}(x)(j)$, for every $j\le n$ and
	\item[(b)] $i_{W,n+1}(x)(n+1)(A)=\int_{i_{W,n}^{-1}(A)} W(x,{-}) \ d\mu$, whenever $A\subseteq \mathbb{M}_n$ is a Borel set.
\end{itemize}
Denote as
$$i_W:X\to \mathbb{M}$$
the unique map defined as $i_W(x)(n)=i_{W,n}(x)(n)$.
Finally, let $\nu_W$ to be the push-forward of $\mu$ via $i_W$.
\end{definition}

To make sure that we can proceed with the inductive construction and that $\nu_W$ is well-defined we need to show that $i_{W,n}$ is a measurable map for every $n\in \mathbb{N}$.
In fact, we show that $\mathcal{C}^W_n$ is the minimum relatively complete sub-$\sigma$-algebra that makes $i_{W,n}$ measurable.

For each $n\in \mathbb{N}$ denote as $\mathcal{B}(\mathbb{M}_n)$ the Borel $\sigma$-algebra of $\mathbb{M}_n$.
First we need a claim that we use in our inductive arguments.

\begin{claim}\label{cl:3-Basic 1}
Let $n\in \mathbb{N}$ and suppose that $i_{W,n}$ is measurable.
Then
$$\int_{\mathbb{M}_n} f \ d\left(i_{W,n+1}(x)(n+1)\right)=\int_X W(x,y) (f\circ i_{W,n})(y) \ d\mu(y)$$
for every bounded Borel function $f:\mathbb{M}_n\to \mathbb{R}$ and every $x\in X$.
\end{claim}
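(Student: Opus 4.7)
The plan is to use the standard measure-theoretic machine: verify the identity for indicators, extend by linearity to simple functions, and then to all bounded Borel functions by approximation. The right-hand side makes sense because $W$ is bounded and $f \circ i_{W,n}$ is a bounded Borel function, using the hypothesis that $i_{W,n}$ is measurable.

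First I would take $f = \mathbf{1}_A$ for a Borel set $A \subseteq \mathbb{M}_n$. Then the left-hand side equals $i_{W,n+1}(x)(n+1)(A)$, which by clause (b) in the definition of $i_{W,n+1}$ equals $\int_{i_{W,n}^{-1}(A)} W(x,-) \, d\mu$. Since $i_{W,n}^{-1}(A) \in \mathcal{B}$ by measurability of $i_{W,n}$, this in turn equals $\int_X W(x,y) \mathbf{1}_A(i_{W,n}(y)) \, d\mu(y) = \int_X W(x,y)(\mathbf{1}_A \circ i_{W,n})(y) \, d\mu(y)$, establishing the identity for $f = \mathbf{1}_A$.

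By linearity of both sides in $f$, the claim extends to every Borel simple function $f: \mathbb{M}_n \to \mathbb{R}$. For a general bounded Borel function $f$, pick Borel simple functions $f_k$ such that $f_k \to f$ pointwise on $\mathbb{M}_n$ with $|f_k| \le \|f\|_\infty$ for every $k$. Then $f_k \circ i_{W,n} \to f \circ i_{W,n}$ pointwise on $X$ with $|f_k \circ i_{W,n}| \le \|f\|_\infty$, and $|W(x,y)(f_k \circ i_{W,n})(y)| \le \|f\|_\infty$, which is integrable with respect to $\mu$ since $\mu$ is finite. The dominated convergence theorem then yields convergence of the right-hand side; applying dominated convergence on the left-hand side with respect to the (sub-probability) measure $i_{W,n+1}(x)(n+1)$ gives convergence of the left-hand side, and the identity passes to the limit.

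There is no serious obstacle here; the only subtle point is verifying, before invoking the definition, that $i_{W,n}^{-1}(A)$ is measurable so that the integral in clause (b) is well-defined and coincides with $\int_X W(x,y)\mathbf{1}_A(i_{W,n}(y)) \, d\mu(y)$. This is exactly the inductive hypothesis that $i_{W,n}$ is measurable. The claim is then the base case that drives the inductive proof that each $i_{W,n+1}$ is itself measurable, and it will be used repeatedly in the sequel to transfer integrals over $\mathbb{M}_n$ back to integrals over $X$.
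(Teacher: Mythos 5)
Your proof is correct and matches the paper's approach: the paper simply declares the claim a straightforward consequence of clause (b) in the definition of $i_{W,n+1}$, and your indicator--simple--bounded-Borel extension via linearity and dominated convergence is precisely the routine argument left implicit there.
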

\begin{proof}
This a straightforward consequence of (b) from the definition of $i_{W,n+1}$.
\end{proof}

\begin{proposition}\label{pr:Description of  C_n by DIDD}
Let $W$ be an integral kernel and $n\in \mathbb{N}$.
Then $i_{W,n}$ is measurable and
$$\left\langle \left\{i^{-1}_{W,n}(A):A\in \mathcal{B}(\mathbb{M}_n)\right\}\right\rangle=\mathcal{C}^W_n,$$
i.e., the minimum relatively complete sub-$\sigma$-algebra of $\mathcal{B}$ that makes the map $i_{W,n}$ measurable is $\mathcal{C}_n^W$.
\end{proposition}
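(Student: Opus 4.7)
The plan is to proceed by induction on $n$, keeping both parts of the statement together. For the base case $n=0$, the map $i_{W,0}$ is the constant map to the one-point space $\{\star\}$, hence trivially measurable, and the $\sigma$-algebra of preimages is $\{\emptyset,X\}$, whose relatively complete hull is by definition $\mathcal{C}^W_0$.

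For the induction step, assume both assertions hold for $n$. Since $\mathbb{M}_{n+1}=\mathbb{M}_n\times P^{n+1}$ and $p_{n,n+1}\circ i_{W,n+1}=i_{W,n}$, the measurability of $i_{W,n+1}$ reduces to the measurability of the ``new coordinate'' $\pi_{n+1}\circ i_{W,n+1}\colon X\to \mathscr{M}_{\le 1}(\mathbb{M}_n)$. Since $\mathbb{M}_n$ is a compact metric space, the Borel $\sigma$-algebra of $\mathscr{M}_{\le 1}(\mathbb{M}_n)$ (with the weak$^*$ topology) is generated by the evaluation maps $\mu\mapsto \int f\,d\mu$ for $f\in C(\mathbb{M}_n,\mathbb{R})$, so I need to check that for every such $f$, the map $x\mapsto \int_{\mathbb{M}_n} f\,d(i_{W,n+1}(x)(n+1))$ is Borel measurable on $X$. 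By Claim~\ref{cl:3-Basic 1} this map equals $x\mapsto \int_X W(x,y)(f\circ i_{W,n})(y)\,d\mu(y)$, which is Borel in $x$ by Fubini applied to the $(\mathcal{B}\times\mathcal{B})$-measurable integrand. This gives measurability of $i_{W,n+1}$, so the object $\mathcal{D}_{n+1}:=\left\langle \{i^{-1}_{W,n+1}(A):A\in\mathcal{B}(\mathbb{M}_{n+1})\}\right\rangle\in\Theta_\mu$ is well-defined.

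Next I would prove $\mathcal{C}^W_{n+1}\subseteq \mathcal{D}_{n+1}$ by showing that $(\mathcal{C}^W_n,\mathcal{D}_{n+1})$ is a $W$-invariant pair, since $\mathcal{C}^W_{n+1}=m(\mathcal{C}^W_n)$ is the minimum such. Using the inductive description of $\mathcal{C}^W_n$, the space $L^2(X,\mathcal{C}^W_n,\mu)$ is the $L^2$-closure of the linear hull of $\{g\circ i_{W,n}: g\colon \mathbb{M}_n\to\mathbb{R}\text{ bounded Borel}\}$. For such $g$, Claim~\ref{cl:3-Basic 1} gives
\[
T_W(g\circ i_{W,n})(x)=\int_{\mathbb{M}_n} g\,d(\pi_{n+1}(i_{W,n+1}(x)))=\Phi_g\bigl(\pi_{n+1}(i_{W,n+1}(x))\bigr),
\]
where $\Phi_g(\mu):=\int g\,d\mu$ is Borel on $\mathscr{M}_{\le 1}(\mathbb{M}_n)$ (continuous for continuous $g$, extended by a monotone class argument). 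Hence $T_W(g\circ i_{W,n})$ is $\mathcal{D}_{n+1}$-measurable; by continuity of $T_W$ and density, $T_W$ sends $L^2(X,\mathcal{C}^W_n,\mu)$ into $L^2(X,\mathcal{D}_{n+1},\mu)$, as required.

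For the reverse inclusion $\mathcal{D}_{n+1}\subseteq \mathcal{C}^W_{n+1}$, I would use that Borel subsets of $\mathbb{M}_{n+1}=\mathbb{M}_n\times P^{n+1}$ are generated by rectangles $B_1\times B_2$. The preimage $i^{-1}_{W,n}(B_1)$ lies in $\mathcal{C}^W_n\subseteq\mathcal{C}^W_{n+1}$ by the induction hypothesis, so it remains to show that $\{x:\pi_{n+1}(i_{W,n+1}(x))\in B_2\}\in\mathcal{C}^W_{n+1}$ for Borel $B_2\subseteq P^{n+1}$. By the generation of the Borel structure on $\mathscr{M}_{\le 1}(\mathbb{M}_n)$ noted above, it suffices to consider $B_2=\{\mu:\int f\,d\mu\in U\}$ with $f\in C(\mathbb{M}_n,\mathbb{R})$ and $U\subseteq\mathbb{R}$ open; the corresponding preimage is $\{x:T_W(f\circ i_{W,n})(x)\in U\}$. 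Since $f\circ i_{W,n}$ is $\mathcal{C}^W_n$-measurable and $(\mathcal{C}^W_n,\mathcal{C}^W_{n+1})$ is a $W$-invariant pair by Claim~\ref{cl:invariantPair}, the function $T_W(f\circ i_{W,n})$ is $\mathcal{C}^W_{n+1}$-measurable; relative completeness of $\mathcal{C}^W_{n+1}$ absorbs any null-set discrepancy between the pointwise Fubini representative and the $L^2$-class. This closes the induction.

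The main obstacle is the bookkeeping around the two different meanings of $T_W(f\circ i_{W,n})$: the pointwise integral (which has to be used to talk about individual preimages) versus its $L^2$-equivalence class (which is what enters the definition of $W$-invariant pair). Relative completeness of the algebras is the tool that reconciles them, and this is why the statement of the proposition insists on $\Theta_\mu$ rather than on raw generated $\sigma$-algebras.
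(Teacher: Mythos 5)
Your proof is correct and follows essentially the same route as the paper's: induction on $n$, using Claim~\ref{cl:3-Basic 1} to identify the evaluation maps composed with $i_{W,n+1}$ with $T_W$ applied to pullbacks along $i_{W,n}$, the generation of the Borel structure of $\mathbb{M}_{n+1}$ by the projection to $\mathbb{M}_n$ together with evaluations on the measure coordinate, and the minimality/invariant-pair characterization of $\mathcal{C}^W_{n+1}=m(\mathcal{C}^W_n)$ for the two inclusions. The only deviations are cosmetic (continuous rather than bounded Borel test functions, and phrasing one inclusion directly via minimality of $m(\mathcal{C}^W_n)$ instead of via measurability of $\{T_W({\bf 1}_A)\}_{A\in\mathcal{C}^W_n}$), and your explicit handling of the pointwise-versus-$L^2$-class issue through relative completeness is a point the paper leaves implicit.
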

\begin{proof}
It is clear that the claim holds for $n=0$ because $\mathcal{C}^W_0=\langle\{\emptyset,X\}\rangle=\left\langle\left\{i^{-1}_{W,0}(\emptyset),i^{-1}_{W,0}(\{\star\}) \right\} \right\rangle$.
Suppose that the claim holds for $n\in \mathbb{N}$.
It follows from \cite[Theorem~17.24]{kechris1995classical} together with the definition of $\mathbb{M}_{n+1}$ that $\mathcal{B}(\mathbb{M}_{n+1})$ is generated by $\{p^{-1}_{n,n+1}(A):A\in \mathcal{B}(\mathbb{M}_n)\}$ and the maps
$$\mathbb{M}_{n+1}\ni\kappa\mapsto \int_{\mathbb{M}_n} f \ d\kappa(n+1)\in \mathbb{R},$$
where $f:\mathbb{M}_n\to \mathbb{R}$ is a bounded Borel function.

Let $A\in \mathcal{B}(\mathbb{M}_n)$.
Then we have
$$i^{-1}_{W,n+1}(p^{-1}_{n,n+1}(A))=i^{-1}_{W,n}(A)\in \mathcal{C}^W_n\subseteq \mathcal{C}^W_{n+1}$$
by the inductive hypothesis.
Let $f:\mathbb{M}_n\to \mathbb{R}$ be a bounded Borel function.
Then the map
$$X\ni x\mapsto \int_{\mathbb{M}_n}f \ d\left(i_{W,n+1}(x)(n+1)\right)=\int_{X} W(x,y)\left(f\circ i_{W,n}\right)(y) \ d\mu(y)$$
is $\mathcal{C}^W_{n+1}$ measurable by the definition of $\mathcal{C}^W_{n+1}$ together with the inductive hypothesis and  Claim~\ref{cl:3-Basic 1}.
This shows that $i_{W,n+1}$ is measurable and $\mathcal{D}_{n+1}\subseteq \mathcal{C}^W_{n+1}$, where we denote as $\mathcal{D}_{n+1}$ the minimum relatively complete sub-$\sigma$-algebra that makes $i_{W,n+1}$ measurable.

It remains to show that $\mathcal{C}^W_{n+1}=\mathcal{D}_{n+1}$.
For $A\in \mathcal{C}^W_n$ we find $B\in \mathcal{B}(\mathbb{M}_n)$ such that $\mu\left(A\triangle i^{-1}_{W,n}(B)\right)=0$ by the inductive hypothesis.
Then we have that the function
$$X\ni x\mapsto i_{W,n+1}(x)(n+1)(B)=\int_{X}W(x,y)\left({\bf 1}_A\right)(y) \ d\mu(y)=T_W({\bf 1}_A)(x)$$
is $\mathcal{D}_{n+1}$ measurable.
An easy argument shows that $\mathcal{C}^W_{n+1}$ is the minimum relatively complete sub-$\sigma$-algebra that makes $\left\{T_W({\bf 1}_A)\right\}_{A\in \mathcal{C}^W_n}$ measurable.
Consequently $\mathcal{D}_{n+1}=\mathcal{C}^W_{n+1}$ and the proof is finished.
\end{proof}

\begin{corollary}\label{cor:Description of C by DIDD}
Let $W$ be an integral kernel.
Then $i_W$ is measurable and
$$\left\langle \left\{i^{-1}_{W}(A):A\in \mathcal{B}(\mathbb{M})\right\}\right\rangle=\mathcal{C}(W),$$
i.e., the minimum relatively complete sub-$\sigma$-algebra of $\mathcal{B}$ that makes the map $i_{W}$ measurable is $\mathcal{C}(W)$.
\end{corollary}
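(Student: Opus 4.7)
My plan is to derive the corollary directly from Proposition~\ref{pr:Description of  C_n by DIDD} by exploiting the compatibility between the finite-stage maps $i_{W,n}$ and the limit map $i_W$. The key observation, which is immediate from the defining clauses (a) and (b) of $i_{W,n+1}$ together with the definition of $i_W$, is the identity
$$p_{n,\infty}\circ i_W=i_{W,n}$$
for every $n\in\mathbb{N}$. Indeed, $i_{W,n}(x)=(i_W(x)(0),\dots,i_W(x)(n))\in\mathbb{M}_n$ by clauses (a) and the definition $i_W(x)(n)=i_{W,n}(x)(n)$.

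To prove measurability of $i_W$, I recall that $\mathbb{M}=\prod_{n\in\mathbb{N}} P^n$ carries the product Borel $\sigma$-algebra, which by \cite[Theorem~17.24]{kechris1995classical} coincides with the $\sigma$-algebra generated by the projections $p_{n,\infty}$. Thus $i_W$ is measurable iff $p_{n,\infty}\circ i_W$ is measurable for every $n$, and by the displayed identity this reduces to measurability of each $i_{W,n}$, which is supplied by Proposition~\ref{pr:Description of  C_n by DIDD}.

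For the $\sigma$-algebra equality, write $\mathcal{D}=\langle\{i_W^{-1}(A):A\in\mathcal{B}(\mathbb{M})\}\rangle$. For the inclusion $\mathcal{D}\subseteq\mathcal{C}(W)$, it suffices to test on a generating family: since $\mathcal{B}(\mathbb{M})$ is generated by $\{p_{n,\infty}^{-1}(B):n\in\mathbb{N},\ B\in\mathcal{B}(\mathbb{M}_n)\}$, we get
$$i_W^{-1}(p_{n,\infty}^{-1}(B))=i_{W,n}^{-1}(B)\in\mathcal{C}^W_n\subseteq\mathcal{C}(W)$$
by Proposition~\ref{pr:Description of  C_n by DIDD}. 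For the reverse inclusion $\mathcal{C}(W)\subseteq\mathcal{D}$, recall that $\mathcal{C}(W)=\langle\bigcup_{n\in\mathbb{N}}\mathcal{C}^W_n\rangle$, so it is enough to show $\mathcal{C}^W_n\subseteq\mathcal{D}$ for every $n$. But $\mathcal{C}^W_n=\langle\{i_{W,n}^{-1}(A):A\in\mathcal{B}(\mathbb{M}_n)\}\rangle$ again by Proposition~\ref{pr:Description of  C_n by DIDD}, and $i_{W,n}^{-1}(A)=i_W^{-1}(p_{n,\infty}^{-1}(A))\in\mathcal{D}$ by the identity above. This yields $\mathcal{C}(W)=\mathcal{D}$.

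There is no real obstacle here; the proof is essentially a bookkeeping argument gluing together the finite-stage statements. The only subtlety to be careful about is that the algebra $\bigcup_n \mathcal{C}^W_n$ only generates $\mathcal{C}(W)$ as a \emph{relatively complete} $\sigma$-algebra, but since $\mathcal{D}$ is itself relatively complete by construction (and contains each $\mathcal{C}^W_n$), the inclusion $\mathcal{C}(W)\subseteq\mathcal{D}$ follows automatically from the minimality of $\mathcal{C}(W)$ among such algebras.
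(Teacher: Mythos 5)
Your proposal is correct and follows essentially the same route as the paper: the paper's proof likewise rests on the cylinder-set generation of $\mathcal{B}(\mathbb{M})$ by $\bigcup_{n}\{p^{-1}_{n,\infty}(A):A\in\mathcal{B}(\mathbb{M}_n)\}$ combined with Proposition~\ref{pr:Description of  C_n by DIDD} and the definition of $\mathcal{C}(W)$, leaving the bookkeeping (the identity $p_{n,\infty}\circ i_W=i_{W,n}$, the two inclusions, and the relative-completeness point) implicit, which you have simply written out in full.
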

\begin{proof}
It is a standard fact that $\mathcal{B}(\mathbb{M})$ is generated by
$$\bigcup_{n\in \mathbb{N}}\left\{p^{-1}_{n,\infty}(A):A\in \mathcal{B}(\mathbb{M}_n)\right\}$$
as a $\sigma$-algebra (see \cite[Section~10]{kechris1995classical}).
The rest is an easy consequence of the definition of $\mathcal{C}(W)$ together with Proposition~\ref{pr:Description of  C_n by DIDD}
\end{proof}

It remains to show that $\nu_W$ is a DIDM.
By the definition, we have $\nu_W\in \mathscr{P}(\mathbb{M})$.

\begin{proposition}\label{pr:nu_W is DIDD}
Let $W$ be an integral kernel.
Then $\nu_W$ is a DIDM and $i_W(x)\in \mathbb{P}$ for every $x\in X$.
\end{proposition}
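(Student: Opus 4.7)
The plan is to verify both DIDM conditions directly from the definitions of $i_W$ and $\nu_W$. First, for the coherence claim $i_W(x) \in \mathbb{P}$: for fixed $x$ and $n$, property (a) yields $p_{n,n+1} \circ i_{W,n+1} = i_{W,n}$, so for any Borel $B \subseteq \mathbb{M}_n$, two applications of property (b) give
\[
(p_{n,n+1})_* i_{W,n+2}(x)(n+2)(B) = \int_{i_{W,n}^{-1}(B)} W(x,-) \, d\mu = i_{W,n+1}(x)(n+1)(B).
\]
Thus $i_W(X) \subseteq \mathbb{P}$, and condition (1) $\nu_W(\mathbb{P}) = 1$ follows at once from $\nu_W = (i_W)_* \mu$.

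For condition (2), I would construct, for each $x \in X$, an explicit measure witnessing the Radon--Nikodym bound. Define $\tilde{\mu}_x \in \mathscr{M}_{\le 1}(\mathbb{M})$ to be the push-forward of $W(x,-) \, d\mu$ via $i_W$, that is, $\tilde{\mu}_x(A) = \int_{i_W^{-1}(A)} W(x,y) \, d\mu(y)$. Since $0 \le W \le 1$, it follows immediately that $\tilde{\mu}_x(A) \le \mu(i_W^{-1}(A)) = \nu_W(A)$ for every Borel $A$, so $\tilde{\mu}_x \le \nu_W$ as measures on $\mathbb{M}$.

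The core step is identifying $\tilde{\mu}_x$ with $\mu_{i_W(x)}$, which by the uniqueness half of Claim~\ref{cl:uniformKolmogorov} reduces to checking the marginal identity $(p_{n,\infty})_* \tilde{\mu}_x = i_W(x)(n+1)$ for every $n$. A straightforward induction from property (a) yields $p_{n,\infty} \circ i_W = i_{W,n}$, and then for Borel $A \subseteq \mathbb{M}_n$ property (b) gives
\[
(p_{n,\infty})_* \tilde{\mu}_x(A) = \int_{i_{W,n}^{-1}(A)} W(x,-) \, d\mu = i_{W,n+1}(x)(n+1)(A) = i_W(x)(n+1)(A).
\]
Consequently $\mu_{i_W(x)} = \tilde{\mu}_x \le \nu_W$ for every $x \in X$. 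To upgrade this to "for $\nu_W$-a.e.\ $\alpha$", observe that the set $C = \{\alpha \in \mathbb{P} : \mu_\alpha \le \nu_W\}$ is Borel (via continuity of $\alpha \mapsto \mu_\alpha$ from Claim~\ref{cl:uniformKolmogorov} combined with separability of $C(\mathbb{M},\mathbb{R})$, since $\mu_1 \le \mu_2$ is equivalent to $\int f \, d\mu_1 \le \int f \, d\mu_2$ for $f$ ranging over a countable dense set of non-negative continuous functions) and contains $i_W(X)$, so $\nu_W(C) = 1$. This delivers both $\mu_\alpha \ll \nu_W$ and $0 \le d\mu_\alpha/d\nu_W \le 1$ for $\nu_W$-a.e.\ $\alpha$, completing the proof. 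The main obstacle is the marginal-matching computation, which amounts to bookkeeping with the recursive definition of the maps $i_{W,n}$.
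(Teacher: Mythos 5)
Your proposal is correct, and its first half (the coherence computation giving $i_W(x)\in\mathbb{P}$, hence $\nu_W(\mathbb{P})=1$) is the same as the paper's. For condition (2) you take a genuinely different route. The paper constructs the Radon--Nikodym derivative explicitly: using Corollary~\ref{cor:Description of C by DIDD} and Corollary~\ref{cor:Quotient} it factors the conditional expectation $\mathbb{E}(W(x,{-})|\mathcal{C}(W))$ through $i_W$ as $g_x\circ i_W$ with $g_x:\mathbb{M}\to[0,1]$, and then verifies on cylinder sets that $\mu_{i_W(x)}(B)=\int_B g_x\,d\nu_W$, so that $g_x$ itself is $\frac{d\mu_{i_W(x)}}{d\nu_W}$. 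You instead identify $\mu_{i_W(x)}$ as the push-forward of $W(x,{-})\,d\mu$ under $i_W$ (via the same marginal computation and the uniqueness part of Kolmogorov's theorem), read off the setwise domination $\mu_{i_W(x)}\le\nu_W$ directly from $0\le W\le 1$, and then invoke the abstract Radon--Nikodym theorem; the a.e.\ statement on $\mathbb{M}$ is handled by your observation that $\{\alpha\in\mathbb{P}:\mu_\alpha\le\nu_W\}$ is Borel (indeed closed, by Claim~\ref{cl:uniformKolmogorov} and a countable dense family of non-negative test functions) and contains $i_W(X)$. Your version buys a shorter, more elementary argument that bypasses conditional expectations, the algebra $\mathcal{C}(W)$ and the factorization corollary entirely (measurability of $i_W$, which you use to form the push-forward, is already supplied by the preceding results); what it gives up is the explicit formula $\frac{d\mu_{i_W(x)}}{d\nu_W}=g_x$ with $g_x\circ i_W=\mathbb{E}(W(x,{-})|\mathcal{C}(W))$, which is the kind of identification that the paper's style keeps close at hand for the subsequent comparison of $W_{\mathcal{C}(W)}$ with ${\bf U}[\nu_W]$, though the paper's later proofs do not formally depend on it. One small point worth making explicit if you write this up: the domination $\mu_\alpha\le\nu_W$ yields not only absolute continuity but also $0\le\frac{d\mu_\alpha}{d\nu_W}\le 1$ $\nu_W$-a.e., since a derivative exceeding $1$ on a set of positive measure would contradict $\mu_\alpha(B)\le\nu_W(B)$ on that set -- you state this conclusion, and it is exactly right.
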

\begin{proof}
First we show that $i_W(x)\in \mathbb{P}$ for every $x\in X$.
This immediately implies that $\nu_W(\mathbb{P})=1$.
Let $A\in \mathcal{B}(\mathbb{M}_{n})$.
Then we have
\begin{equation*}
\begin{split}
i_{W}(x)(n+1)(A)= & \ i_{W,n+1}(x)(n+1)(A)=\int_{i^{-1}_{W,n}(A)} W(x,y) \ d\mu(y) \\
= & \ \int_{i^{-1}_{W,n+1}(p^{-1}_{n,n+1}(A))} W(x,y) \ d\mu(y)=i_{W,n+2}(x)(n+2)(p^{-1}_{n,n+1}(A)) \\
= & \ i_{W}(x)(n+2)(p^{-1}_{n,n+1}(A))=(p_{n,n+1})_*\left(i_W(x)(n+2)\right)(A)
\end{split}
\end{equation*}
by the definition of $i_W$.
This shows that $i_W(x)\in \mathbb{P}$ for every $x\in X$.

Let $x\in X$ and write $\mu_x=\mu_{i_W(x)}$.
It follows from Corollary~\ref{cor:Description of C by DIDD} and Corollary~\ref{cor:Quotient} that there is a function $g_x:\mathbb{M}\to [0,1]$ such that
$$\mathbb{E}(W(x,{-})|\mathcal{C}(W))=g_x\circ i_W$$
holds $\mu$-almost everywhere.
We show that $g_x$ is the desired Radon--Nikodym derivative $\frac{d\mu_x}{d\nu_W}$.
To this end, let $A\in \bigcup_{n\in \mathbb{N}} \mathcal{B}(\mathbb{M}_n)$.
Then we have
\begin{equation*}
\begin{split}
\mu_x(p^{-1}_{n,\infty}(A))= & \ i_W(x)(n+1)(A)=\int_{i^{-1}_{W,n}(A)} W(x,{-}) \ d\mu \\
= & \ \int_{i^{-1}_{W,n}(A)} \mathbb{E}\left(W(x,{-})|\mathcal{C}^W_n\right) \ d\mu= \int_{i^{-1}_{W,n}(A)} \mathbb{E}(W(x,{-})|\mathcal{C}(W)) \ d\mu \\
= & \ \int_{i^{-1}_{W,n}(A)} g_x\circ i_W \ d\mu=\int_{i^{-1}_W(p^{-1}_{n,\infty}(A))} g_x\circ i_W \ d\mu=\int_{p^{-1}_{n,\infty}(A)} g_x \ d\nu_W,
\end{split}
\end{equation*}
where the third equality follows from $i^{-1}_{W,n}(A)\in \mathcal{C}^W_n$ by Proposition~\ref{pr:Description of  C_n by DIDD} and the sixth equality by the fact that $x\in i^{-1}_{W,n}(A)$ if and only if $x\in i^{-1}_W(p^{-1}_{n,\infty}(A))$ by the definition of $i_W$.
The rest follows from the fact that $\mu_x$ and $\nu_W$ are well defined and 
$$\bigcup_{n\in \mathbb{N}}\left\{p^{-1}_{n,\infty}(A):A\in \mathcal{B}(\mathbb{M}_n)\right\}$$
generates $\mathcal{B}(\mathbb{M})$.
\end{proof}

\subsection{From DIDM to Integral Kernels}

We start with a DIDM $\nu$ and define an integral kernel ${\bf U}[\nu]$.
Then we show what is the connection between $W$ and ${\bf U}[\nu_W]$.
Recall that by the definition, $\nu$ is concentrated on $\mathbb{P}$ and the map $\alpha\mapsto \mu_\alpha$ is continuous by Claim~\ref{cl:uniformKolmogorov}.
This is enough to get the following.

\begin{claim}\label{cl:Kernel from DIDM}
Let $\nu$ be a DIDM.
Then there is ${\bf U}[\nu]\in L^\infty(\mathbb{M}\times \mathbb{M},\nu\times \nu)$ such that $\|{\bf U}[\nu]\|_\infty\le 1$ and 
$${\bf U}[\nu](\alpha,{-})=\frac{d\mu_{\alpha}}{d\nu}$$
for $\nu$-almost every $\alpha\in \mathbb{M}$.
\end{claim}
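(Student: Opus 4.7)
The plan is to produce ${\bf U}[\nu]$ as a jointly Borel $(\alpha,\beta)$-function via a countable-partition martingale approximation, leveraging the weak-$\ast$ continuity of $\alpha\mapsto\mu_\alpha$ granted by Claim~\ref{cl:uniformKolmogorov}. Since $\mathbb{M}$ is a compact metric space, hence a standard Borel space, I would fix a refining sequence of countable Borel partitions $\mathcal{P}_0,\mathcal{P}_1,\dots$ of $\mathbb{M}$ whose union generates $\mathcal{B}(\mathbb{M})$. For any Borel set $A\subseteq\mathbb{M}$ the evaluation $\mu\mapsto\mu(A)$ is Borel measurable on $\mathscr{M}_{\le 1}(\mathbb{M})$ endowed with the weak-$\ast$ topology (begin with $A$ closed via the portmanteau theorem and extend by a monotone class argument), so composition with the continuous assignment $\alpha\mapsto\mu_\alpha$ shows that $\alpha\mapsto\mu_\alpha(A)$ is Borel.

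Next I would set, for each $n\in\mathbb{N}$,
$$U_n(\alpha,\beta):=\sum_{A\in\mathcal{P}_n,\ \nu(A)>0}\frac{\mu_\alpha(A)}{\nu(A)}\,{\bf 1}_A(\beta).$$
Since $\mathcal{P}_n$ is countable and each summand is jointly Borel in $(\alpha,\beta)$, $U_n$ is jointly Borel measurable on $\mathbb{M}\times\mathbb{M}$; the DIDM bound $0\le d\mu_\alpha/d\nu\le 1$ gives $\mu_\alpha(A)\le\nu(A)$, so $0\le U_n\le 1$ everywhere. For each $\alpha$ in the $\nu$-conull set on which $g_\alpha:=d\mu_\alpha/d\nu$ exists and is bounded by $1$, the slice $\beta\mapsto U_n(\alpha,\beta)$ is precisely the conditional expectation $\mathbb{E}_\nu[g_\alpha\mid\sigma(\mathcal{P}_n)]$ on the probability space $(\mathbb{M},\mathcal{B}(\mathbb{M}),\nu)$. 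Because $\sigma\!\left(\bigcup_n\mathcal{P}_n\right)=\mathcal{B}(\mathbb{M})$ and $g_\alpha\in L^\infty(\nu)$, Doob's martingale convergence theorem yields $U_n(\alpha,\cdot)\to g_\alpha(\cdot)$ $\nu$-almost everywhere.

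I would then define
$${\bf U}[\nu](\alpha,\beta):=\limsup_{n\to\infty}U_n(\alpha,\beta),$$
which is jointly Borel, takes values in $[0,1]$, and by the previous step satisfies ${\bf U}[\nu](\alpha,\cdot)=g_\alpha(\cdot)$ $\nu$-a.e.\ for $\nu$-a.e.\ $\alpha$; this delivers the claimed representative with $\|{\bf U}[\nu]\|_\infty\le 1$. The main obstacle is exactly this joint measurability: the pointwise Radon--Nikodym derivatives $g_\alpha$ are each defined only up to an $\alpha$-dependent $\nu$-null set, so they do not a priori patch together to a single measurable function on the product. The countable-partition approximation bypasses the difficulty by replacing the abstract density by the concrete averages $\mu_\alpha(A)/\nu(A)$, whose Borel measurability in $\alpha$ is an immediate consequence of Claim~\ref{cl:uniformKolmogorov}, while martingale convergence then automatically identifies the limit with the correct density on each fibre.
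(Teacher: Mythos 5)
Your argument is correct, but it takes a genuinely different route from the paper. The paper defines a Borel probability measure $\Phi$ on $\mathbb{M}\times\mathbb{M}$ by $\Phi(A)=\int_{\mathbb{M}}\mu_\alpha(A_\alpha)\,d\nu(\alpha)$, observes that the DIDM condition makes $\Phi$ absolutely continuous with respect to $\nu\times\nu$, takes ${\bf U}[\nu]$ to be the global Radon--Nikodym derivative of $\Phi$ with respect to $\nu\times\nu$, and leaves the identification of the fibres ${\bf U}[\nu](\alpha,{-})$ with $\frac{d\mu_\alpha}{d\nu}$ as an exercise. You instead build ${\bf U}[\nu]$ directly as a pointwise $\limsup$ of the partition averages $U_n$, and Doob's martingale convergence theorem performs the fibrewise identification; so your proof makes explicit precisely the step the paper omits, at the cost of fixing a refining generating sequence of countable partitions and checking Borelness of $\alpha\mapsto\mu_\alpha(A)$ (which, as you indicate, follows from Claim~\ref{cl:uniformKolmogorov} together with the fact that the weak* Borel structure on $\mathscr{M}_{\le 1}(\mathbb{M})$ is generated by the evaluation maps $\mu\mapsto\mu(A)$). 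Two cosmetic points: $\mu_\alpha$ is only defined for $\alpha$ in the conull set $\mathbb{P}$, and the bound $\mu_\alpha(A)\le\nu(A)$ is guaranteed only for $\nu$-almost every $\alpha$, so $0\le U_n\le 1$ holds only off a set of the form $N\times\mathbb{M}$ with $\nu(N)=0$; this is harmless for the $L^\infty$ statement, or you can simply truncate $\limsup_n U_n$ at $1$ and set it to $0$ off $\mathbb{P}\times\mathbb{M}$. Both approaches attack the same difficulty---replacing the $\alpha$-by-$\alpha$ densities, each defined only up to an $\alpha$-dependent null set, by a single jointly measurable function---the paper via one application of Radon--Nikodym on the product space, you via a differentiation/martingale scheme that yields the fibre property automatically.
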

\begin{proof}
Let $A\in \mathcal{B}(\mathbb{M}\times \mathbb{M})$ and put $A_\alpha=\{\beta\in \mathbb{M}:(\alpha,\beta)\in A\}$.
Then the assignment
$$\mathbb{M}\ni \alpha\mapsto \mu_\alpha(A_\alpha)\in [0,1]$$
is defined $\nu$-almost everywhere and it is an easy consequence of Claim~\ref{cl:uniformKolmogorov} that it is measurable.
This allows to compute
$$\Phi(A)=\int_{\mathbb{M}} \mu_\alpha(A_\alpha) \ d\nu.$$
It is straightforward to check that $\Phi$ is a Borel probability measure on $\mathbb{M}\times \mathbb{M}$ that is absolutely continuous with respect to $(\nu\times \nu)$.
Let ${\bf U}[\nu]$ be the corresponding Radon--Nikodym derivative.
We leave as an exercise to show that ${\bf U}[\nu](\alpha,{-})=\frac{d\mu_\alpha}{d\nu}$ for $\nu$-almost every $\alpha\in \mathbb{M}$. 
\end{proof}

\begin{theorem}\label{th:DIDD and kernels}
Let $W$ be an integral kernel on $X$.
Then 
$$W_{\mathcal{C}(W)}(x,y)={\bf U}[\nu_W](i_W(x),i_W(y))$$
for $(\mu\times \mu)$-almost every $(x,y)\in X\times X$.
\end{theorem}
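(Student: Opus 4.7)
The plan is to verify the equality by integrating both sides against rectangles that generate the product $\sigma$-algebra $\mathcal{B}\times \mathcal{C}(W)$, exploiting the work already done in the proof of Proposition~\ref{pr:nu_W is DIDD}. First I would note that both sides are $\mathcal{B}\times \mathcal{C}(W)$-measurable: the left side by the definition $W_{\mathcal{C}(W)}=\mathbb{E}(W|\mathcal{B}\times \mathcal{C}(W))$ (and Claim~\ref{cl:Basic 1}), and the right side because ${\bf U}[\nu_W]$ is $\mathcal{B}(\mathbb{M})\times \mathcal{B}(\mathbb{M})$-measurable by Claim~\ref{cl:Kernel from DIDM} while $i_W$ is $\mathcal{C}(W)$-measurable by Corollary~\ref{cor:Description of C by DIDD}. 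Therefore it suffices to show that they integrate to the same value over every rectangle $A\times B$ with $A\in\mathcal{B}$ and $B\in \mathcal{C}(W)$, using Corollary~\ref{cor:Description of C by DIDD} to write $B=i_W^{-1}(B')$ modulo a $\mu$-null set for some $B'\in \mathcal{B}(\mathbb{M})$.

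For the left side this integral is simply $\int_{A\times B} W\,d(\mu\times \mu)$ by Theorem~\ref{th:ConditionalExp}~(3). For the right side I would apply Fubini and then, for $\nu_W$-a.e.\ $\alpha=i_W(x)$, use Claim~\ref{cl:Kernel from DIDM} together with the change of variables $(i_W)_*\mu=\nu_W$ to get
\[
\int_B {\bf U}[\nu_W](i_W(x),i_W(y))\,d\mu(y)=\int_{B'}\frac{d\mu_{i_W(x)}}{d\nu_W}\,d\nu_W=\mu_{i_W(x)}(B').
\]
The final step is to identify $\mu_{i_W(x)}(B')$ with $\int_{i_W^{-1}(B')}W(x,y)\,d\mu(y)$ for every Borel $B'\subseteq \mathbb{M}$. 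This identity for $B'=p_{n,\infty}^{-1}(A)$ with $A\in \mathcal{B}(\mathbb{M}_n)$ is exactly what was computed inside the proof of Proposition~\ref{pr:nu_W is DIDD}. Since both sides are Borel measures on $\mathbb{M}$ (as functions of $B'$) and agree on the $\pi$-system $\bigcup_n\{p_{n,\infty}^{-1}(A):A\in \mathcal{B}(\mathbb{M}_n)\}$ that generates $\mathcal{B}(\mathbb{M})$, a standard Dynkin/monotone class argument extends the identity to every Borel set.

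Putting these pieces together and applying Fubini again,
\[
\int_A\mu_{i_W(x)}(B')\,d\mu(x)=\int_A\int_{i_W^{-1}(B')}W(x,y)\,d\mu(y)\,d\mu(x)=\int_{A\times B}W\,d(\mu\times \mu),
\]
which matches the integral of $W_{\mathcal{C}(W)}$ over $A\times B$. The uniqueness of conditional expectation on the product $\sigma$-algebra $\mathcal{B}\times \mathcal{C}(W)$ then forces the desired almost-everywhere equality.

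The main obstacle I expect is the bookkeeping in step three: making sure the $\nu_W$-a.e.\ statement from Claim~\ref{cl:Kernel from DIDM} transfers to a genuine $\mu$-a.e.\ statement in the $x$-variable (trivial, since $\nu_W=(i_W)_*\mu$) and that the identity $\mu_{i_W(x)}(B')=\int_{i_W^{-1}(B')}W(x,y)\,d\mu(y)$ holds for \emph{all} Borel $B'$, not just cylinders. Everything else is Fubini and the definition of conditional expectation.
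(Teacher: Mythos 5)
Your proof is correct and is essentially the paper's argument: in both cases the heart is the cylinder-set identity $\mu_{i_W(x)}\bigl(p_{n,\infty}^{-1}(B)\bigr)=\int_{i_{W,n}^{-1}(B)}W(x,\cdot)\,d\mu$ (from the construction of $i_W$ and Proposition~\ref{pr:nu_W is DIDD}) combined with ${\bf U}[\nu_W](i_W(x),\cdot)=\frac{d\mu_{i_W(x)}}{d\nu_W}$ from Claim~\ref{cl:Kernel from DIDM}. The only difference is packaging: the paper proves the operator identity $T_{W_{\mathcal{C}(W)}}=T_U$ by testing on indicators of sets in $\bigcup_n\mathcal{C}^W_n$ (dense in $L^2(X,\mathcal{C}(W),\mu)$) and noting both operators vanish on the orthogonal complement, whereas you compare integrals over rectangles generating $\mathcal{B}\times\mathcal{C}(W)$ and invoke uniqueness of conditional expectation, at the cost of an extra (correct, but avoidable) Dynkin step to pass from cylinder sets to general Borel $B'$.
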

\begin{proof}
Recall that by Proposition~\ref{pr:nu_W is DIDD}, we have that ${\bf U}[\nu_W]$ is well defined because $\nu_W$ is a DIDM and $i_W(x)\in \mathbb{P}$ for every $x\in X$.
Consequently, ${\bf U}[\nu_W](i_W(x),{-})=\frac{d\mu_{i_W(x)}}{d\nu_W}$ for $\mu$-almost every $x\in X$ by Claim~\ref{cl:Kernel from DIDM}.

Define an integral kernel $U$ on $X$ as
$$U(x,y)={\bf U}[\nu_W](i_W(x),i_W(y)).$$
It is clearly enough to show that $T_{W_{\mathcal{C}(W)}}=T_U$.
By the definition of $W_{\mathcal{C}(W)}$ and Corollary~\ref{cor:Description of C by DIDD}, we have that $W_{\mathcal{C}(W)}$ and $U$ are $\left(\mathcal{C}(W)\times \mathcal{C}(W)\right)$-measurable.
This implies $T_{W_{\mathcal{C}(W)}}(f)=T_U(f)=0$ whenever $f\in L^2(X,\mathcal{C}(W),\mu)^\bot$.
It is therefore enough to show that $T_{W_{\mathcal{C}(W)}}({\bf 1}_A)=T_{U}({\bf 1}_A)$ for every $A\in \bigcup_{n\in \mathbb{N}}\mathcal{C}^W_n$.

To this end, pick such an $A\in \mathcal{C}^W_n$ for some $n\in \mathbb{N}$.
By Proposition~\ref{pr:Description of  C_n by DIDD}, we may assume (up to a $\mu$-null set) that there is $B\in \mathcal{B}(\mathbb{M}_n)$ such that $A=i^{-1}_{W,n}(B)$.
Recall that it follows from the construction of $i_W$ that $i^{-1}_W(p^{-1}_{n,\infty}(B))=A$.
Then we have
\begin{equation*}
\begin{split}
T_{W_{\mathcal{C}(W)}}({\bf 1}_A)(x)= & \ \int_{A} W(x,{-}) \ d\mu= \int_{i^{-1}_{W,n}(B)} W(x,{-}) \ d\mu \\
= & \ i_W(x)(n+1)(B)= \mu_{i_W(x)}(p^{-1}_{n,\infty}(B)) \\
= & \ \int_{p^{-1}_{n,\infty}(B)} \frac{d\mu_{i_W(x)}}{d\nu_W} \ d\nu_W= \int_{p^{-1}_{n,\infty}(B)} {\bf U}[\nu_W](i_W(x),{-}) \ d\nu_W\\
= & \ \int_{A} U(x,{-}) \ d\mu=T_U({\bf 1}_A)
\end{split}
\end{equation*}
by the definition of $i_W$, $\mu_\alpha$ and ${\bf U}[\nu]$ for $\mu$-almost every $x\in X$.
\end{proof}

\begin{corollary}\label{cor:2 implies 3}
Let $W$ be a graphon.
Then $W/\mathcal{C}(W)$ is isomorphic to ${\bf U}[\nu_W]$.
In particular, ${\bf U}[\nu_W]$ is a graphon.
\end{corollary}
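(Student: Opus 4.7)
The plan is to exhibit the isomorphism explicitly via the map $i_W$. By Corollary~\ref{cor:Description of C by DIDD}, the sub-$\sigma$-algebra $\mathcal{C}(W)$ is precisely the minimum relatively complete sub-$\sigma$-algebra making $i_W:X\to\mathbb{M}$ measurable. Using the quotient construction recalled before Proposition~\ref{pr:Quotient and Invariant algebra}, the map $i_W$ factors through $q_{\mathcal{C}(W)}:X\to X/\mathcal{C}(W)$ to yield a Borel map $\bar{i}_W:X/\mathcal{C}(W)\to\mathbb{M}$ with $\bar{i}_W\circ q_{\mathcal{C}(W)}=i_W$ almost everywhere. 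By the definition of $\nu_W$ as the push-forward of $\mu$ via $i_W$, and since $\mu/\mathcal{C}(W)$ is the push-forward of $\mu$ via $q_{\mathcal{C}(W)}$, the push-forward of $\mu/\mathcal{C}(W)$ by $\bar{i}_W$ is exactly $\nu_W$, so $\bar{i}_W$ is measure-preserving.

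The key technical step is to upgrade $\bar{i}_W$ to a measure-preserving isomorphism between $(X/\mathcal{C}(W),\mu/\mathcal{C}(W))$ and $(\mathbb{M},\nu_W)$ modulo null sets. Essential injectivity is the heart of the matter: if $\bar{i}_W([x])=\bar{i}_W([y])$ for $x,y\in X$, then $i_W(x)=i_W(y)$, so $x$ and $y$ agree on every $i_W$-measurable function; since $\mathcal{C}(W)$ is generated (relatively completely) by $i_W$, this means $x$ and $y$ cannot be separated by any $\mathcal{C}(W)$-set, i.e.\ $[x]=[y]$ in $X/\mathcal{C}(W)$. Then $\bar{i}_W$ is an injective Borel map between standard Borel spaces, hence by the Lusin--Souslin theorem its image $Y\subseteq\mathbb{M}$ is Borel and $\bar{i}_W$ is a Borel isomorphism onto $Y$; since it is measure-preserving and $\nu_W(Y)=(\mu/\mathcal{C}(W))(X/\mathcal{C}(W))=1$, redefining $\bar{i}_W$ on a null set gives a genuine measure-preserving bijection. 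This argument relies on the standard Borel machinery collected in Appendix~\ref{App A}.

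Granted this isomorphism, the identification of the two graphons is a direct application of Theorem~\ref{th:DIDD and kernels}. Indeed, the defining relation $W_{\mathcal{C}(W)}(x,y)=(W/\mathcal{C}(W))(q_{\mathcal{C}(W)}(x),q_{\mathcal{C}(W)}(y))$ for $(\mu\times\mu)$-a.e.\ $(x,y)$ combines with Theorem~\ref{th:DIDD and kernels} to give
\[
(W/\mathcal{C}(W))(q_{\mathcal{C}(W)}(x),q_{\mathcal{C}(W)}(y))={\bf U}[\nu_W]\bigl(\bar{i}_W(q_{\mathcal{C}(W)}(x)),\bar{i}_W(q_{\mathcal{C}(W)}(y))\bigr)
\]
for $(\mu\times\mu)$-a.e.\ $(x,y)$. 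Since $q_{\mathcal{C}(W)}\times q_{\mathcal{C}(W)}$ pushes $\mu\times\mu$ to $(\mu/\mathcal{C}(W))\times(\mu/\mathcal{C}(W))$, this equality transfers to a $(\mu/\mathcal{C}(W))\times(\mu/\mathcal{C}(W))$-a.e.\ identity on $(X/\mathcal{C}(W))^2$, i.e.\ $W/\mathcal{C}(W)=({\bf U}[\nu_W])^{\bar{i}_W}$. Hence $\bar{i}_W$ exhibits the desired isomorphism. The ``in particular'' statement follows because $W/\mathcal{C}(W)$ is symmetric by Proposition~\ref{pr:Quotient and Invariant algebra}(i), and symmetry is preserved by measure-preserving isomorphism, so ${\bf U}[\nu_W]$ is a graphon.

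The only nontrivial ingredient beyond the results already established is the Lusin--Souslin step in the middle paragraph, which I expect to be the main (but essentially routine) obstacle; all other equalities are bookkeeping with the definitions of $i_W$, $q_{\mathcal{C}(W)}$, and $\nu_W$ together with Theorem~\ref{th:DIDD and kernels}.
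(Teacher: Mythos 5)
Your proposal is correct in substance but takes a genuinely different route from the paper. The paper never constructs the point map by hand: by Theorem~\ref{th:Quotients and Markov} and Corollary~\ref{cor:Quotient}, both $q_{\mathcal{C}(W)}$ and $i_W$ induce Markov embeddings of $L^2(X/\mathcal{C}(W),\mu/\mathcal{C}(W))$ and of $L^2(\mathbb{M},\nu_W)$ onto the \emph{same} subspace $L^2(X,\mathcal{C}(W),\mu)$; composing one with the adjoint of the other gives a Markov isomorphism, and Theorem~\ref{th:Markov Isomorphism} then produces the measure preserving almost bijection $j_W$ with $i_W=j_W\circ q_{\mathcal{C}(W)}$. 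The final identification of the kernels via Theorem~\ref{th:DIDD and kernels} is the same in both arguments, as is the symmetry transfer. You instead build $j_W=\bar i_W$ directly: factor $i_W$ through $q_{\mathcal{C}(W)}$, check that it pushes $\mu/\mathcal{C}(W)$ to $\nu_W$, prove essential injectivity, and invoke Lusin--Souslin. The paper's route delegates all point realization to the appendix (essentially von Neumann's theorem packaged as Theorem~\ref{th:Markov Isomorphism}); yours avoids the operator-theoretic detour at the price of redoing that realization by hand. (Also note the factorization of the $\mathbb{M}$-valued map $i_W$ through $q_{\mathcal{C}(W)}$ itself needs a word, e.g.\ embed $\mathbb{M}$ into $[0,1]^{\mathbb{N}}$ and apply Theorem~\ref{th:Quotients and Markov}(4),(7) coordinatewise; this is routine.)

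One step does need repair: your injectivity argument is phrased pointwise and, as written, fails. By Corollary~\ref{cor:Description of C by DIDD}, $\mathcal{C}(W)$ is only the \emph{relative completion} of $\left\{i_W^{-1}(A):A\in\mathcal{B}(\mathbb{M})\right\}$, so a $\mathcal{C}(W)$-set agrees with an $i_W$-preimage only up to a $\mu$-null set, and points of $X/\mathcal{C}(W)$ are not literally equivalence classes separated exactly by $\mathcal{C}(W)$-sets; hence $i_W(x)=i_W(y)$ does not imply $q_{\mathcal{C}(W)}(x)=q_{\mathcal{C}(W)}(y)$. The correct statement is injectivity off a null set, and the fix is standard: choose a countable family $\{A_n\}$ of Borel subsets of $X/\mathcal{C}(W)$ separating its points, choose $B_n\in\mathcal{B}(\mathbb{M})$ with $(\mu/\mathcal{C}(W))\left(A_n\triangle\bar i_W^{-1}(B_n)\right)=0$ (possible because the $\bar i_W$-preimages generate the measure algebra of the quotient, by Corollary~\ref{cor:Description of C by DIDD} together with Theorem~\ref{th:Quotients and Markov}), and discard the null set $\bigcup_n\left(A_n\triangle\bar i_W^{-1}(B_n)\right)$ along with the null set where $\bar i_W\circ q_{\mathcal{C}(W)}\neq i_W$. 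On the remaining conull Borel set $\bar i_W$ is injective, Lusin--Souslin applies, and your push-forward computation upgrades it to a measure preserving almost bijection; the rest of your argument then goes through.
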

\begin{proof}
By Theorem~\ref{th:Quotients and Markov} and Corollary~\ref{cor:Quotient}, the maps $q_{\mathcal{C}(W)}$ and $i_W$ induce Markov injections $I_{\mathcal{C}(W)}:L^2(X/{\mathcal{C}(W)},\mu/{\mathcal{C}(W)})\to L^2(X,\mu)$ and $I:L^2(\mathbb{M},\nu_W)\to L^2(X,\mu)$ that are isometries onto $L^2(X,{\mathcal{C}(W)},\mu)$.
It follows that
$$(I_{\mathcal{C}(W)})^*\circ I=I^{-1}_{{\mathcal{C}(W)}}\circ I:L^2(\mathbb{M},\nu_W)\to L^2(X/{\mathcal{C}(W)},\mu/{\mathcal{C}(W)})$$
is a Markov isomorphism.
By Theorem~\ref{th:Markov Isomorphism}, we find a measurable measure preserving almost bijection $j_W:X/{\mathcal{C}(W)}\to \mathbb{M}$ such that $i_W=j_W\circ q_{\mathcal{C}(W)}$.
Now it follows easily that $(W/\mathcal{C}(W))(x,y)={\bf U}[\nu_W](j_W(x),j_W(y))$ for $\left((\mu/{\mathcal{C}(W)})\times (\mu/{\mathcal{C}(W)})\right)$-almost every $(x,y)\in (X/{\mathcal{C}(W)})\times (X/{\mathcal{C}(W)})$ by the definition of $W/\mathcal{C}(W)$ and Theorem~\ref{th:DIDD and kernels}.
\end{proof}

\section{Tree functions}\label{sec:Tree functions}

This section is the most technical part of the paper.
We show two things.
First, if $W$ is a graphon and $\mathcal{C}\in \Theta_\mu$ is $W$-invariant, then
$$t(T,W)=t(T,W_\mathcal{C})$$
for every finite tree $T$.
Second, there is a collection $\mathcal{T}\subseteq C(\mathbb{M},\mathbb{R})$ that satisfies assumption of Corollary~\ref{cor:SepMeasures}, i.e., $\mathcal{T}$ separates measures, such that for every $f\in \mathcal{T}$ there is a finite tree $T$ such that 
$$t(T,W)=\int_{\mathbb{M}} f \ d\nu_W$$
for every graphon $W$.

Since we work with arbitrary integral kernels, not necessarily graphons, we state all the results in terms of rooted trees rather than trees.
Recall that for a Borel probability measure $\mu$ on $X$ we denote as $\mu^{\oplus k}$ the Borel probability measure on $X^k$ that is the product of $k$-many copies of $\mu$.

\subsection{Tree Functions and Invariant Subspaces}

A \emph{finite rooted tree} $\mathfrak{T}$ is a pair $(T,v)$, where $T=(V(T),E(T))$ is a finite tree and $v$ is a distinguished vertex of $T$.
The \emph{height, $h(\mathfrak{T})$, of $\mathfrak{T}$} is the maximum number of edges in a path that starts at $v$.
We denote as $c(\mathfrak{T})$ the degree of $v$ in $T$.
Every finite rooted tree $\mathfrak{T}$ of non-zero height can be decomposed into subtrees that are rooted at the neighbors of $v$.
Namely, there is a sequence $\{\mathfrak{T}_i\}_{i\in [c(\mathfrak{T})]}$ of finite rooted trees such that $V(T)=\{v\}\cup\bigcup_{i\in [c(\mathfrak{T})]}V(T_i)$ and $E(T)=\bigcup_{i\in [c(\mathfrak{T})]} \{v,v_i\}\cup E(T_i)$, where $\mathfrak{T}_i=(T_i,v_i)$.
We call $\{\mathfrak{T}_i\}_{i\in c([\mathfrak{T}])}$ the \emph{corresponding decomposition of $\mathfrak{T}$}.
Note that if $h(\mathfrak{T})>0$, then $h(\mathfrak{T}_i)<h(\mathfrak{T})$ for every $i\in [c(\mathfrak{T})]$ and there is $i\in [c(\mathfrak{T})]$ such that $h(\mathfrak{T}_i)+1=h(\mathfrak{T})$.

\begin{definition}
Let $W$ be an integral kernel and $\mathfrak{T}$ be a finite rooted tree.
We define inductively function $f^W_{\mathfrak{T}}:X\to [0,1]$ as follows.
If $h(\mathfrak{T})=0$, then put $f^W_{\mathfrak{T}}=1$.
Suppose that $h(\mathfrak{T})>0$ and define
$$f^W_{\mathfrak{T}}(x)=\int_{X^{[c(\mathfrak{T})]}} \prod_{i\in [c(\mathfrak{T})]} f^W_{\mathfrak{T}_i}(y(i)) W(x,y(i)) \ d\mu^{\oplus c(\mathfrak{T})}(y),$$
where $\{\mathfrak{T}_i\}_{i\in [c(\mathfrak{T})]}$ is the corresponding decomposition of $\mathfrak{T}$.
\end{definition}

\begin{proposition}\label{pr:tree functions and invariant algebras}
Let $W$ be an integral kernel on $X$, $\mathfrak{T}$ be a finite rooted tree and $\mathcal{C}\in \Theta_\mu$ be $W$-invariant.
Then $f^W_{\mathfrak{T}}$ is $\mathcal{C}^W_{h(\mathfrak{T})}$-measurable and $f^{W_\mathcal{C}}_{\mathfrak{T}}(x)=f^{W}_{\mathfrak{T}}(x)$ for $\mu$-almost every $x\in X$.
\end{proposition}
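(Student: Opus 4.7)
The plan is to induct on $h(\mathfrak{T})$, in both parts simultaneously. The base case $h(\mathfrak{T})=0$ is trivial since $f^W_{\mathfrak{T}} \equiv 1$ is constant, hence $\mathcal{C}^W_0$-measurable, and clearly agrees with $f^{W_\mathcal{C}}_{\mathfrak{T}}$.

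For the inductive step, the key observation is that Fubini's Theorem factors the defining integral as a product. Writing $\{\mathfrak{T}_i\}_{i\in[c(\mathfrak{T})]}$ for the corresponding decomposition, one obtains
$$f^W_{\mathfrak{T}}(x) = \prod_{i\in[c(\mathfrak{T})]} \int_X f^W_{\mathfrak{T}_i}(y)\,W(x,y)\,d\mu(y) = \prod_{i\in[c(\mathfrak{T})]} T_W\!\left(f^W_{\mathfrak{T}_i}\right)(x).$$
By the inductive hypothesis, each $f^W_{\mathfrak{T}_i}$ is $\mathcal{C}^W_{h(\mathfrak{T}_i)}$-measurable, and since $h(\mathfrak{T}_i) \le h(\mathfrak{T})-1$ and the sequence $\{\mathcal{C}^W_n\}_{n\in\mathbb{N}}$ is increasing, each $f^W_{\mathfrak{T}_i}$ is $\mathcal{C}^W_{h(\mathfrak{T})-1}$-measurable. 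Because $(\mathcal{C}^W_{h(\mathfrak{T})-1}, \mathcal{C}^W_{h(\mathfrak{T})})$ is a $W$-invariant pair by Claim~\ref{cl:invariantPair}, each $T_W(f^W_{\mathfrak{T}_i})$ is $\mathcal{C}^W_{h(\mathfrak{T})}$-measurable; their product is then $\mathcal{C}^W_{h(\mathfrak{T})}$-measurable as well, proving the first claim.

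For the second claim, the same factorization gives
$$f^{W_\mathcal{C}}_{\mathfrak{T}}(x) = \prod_{i\in[c(\mathfrak{T})]} T_{W_\mathcal{C}}\!\left(f^{W_\mathcal{C}}_{\mathfrak{T}_i}\right)(x),$$
and by induction $f^{W_\mathcal{C}}_{\mathfrak{T}_i} = f^W_{\mathfrak{T}_i}$ almost everywhere. Since $\mathcal{C}$ is $W$-invariant, the minimality of $\mathcal{C}(W)$ gives $\mathcal{C}^W_n \subseteq \mathcal{C}(W) \subseteq \mathcal{C}$ for every $n$, so each $f^W_{\mathfrak{T}_i}$ is $\mathcal{C}$-measurable. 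Applying Claim~\ref{cl:How quotient acts} yields
$$T_{W_\mathcal{C}}\!\left(f^W_{\mathfrak{T}_i}\right) = T_W\!\left(\mathbb{E}(f^W_{\mathfrak{T}_i}\,|\,\mathcal{C})\right) = T_W\!\left(f^W_{\mathfrak{T}_i}\right),$$
and taking the product over $i$ concludes $f^{W_\mathcal{C}}_{\mathfrak{T}} = f^W_{\mathfrak{T}}$ almost everywhere.

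The only delicate point is keeping track of which $\sigma$-algebra controls each tree function: the inductive hypothesis must be invoked at level $h(\mathfrak{T})-1$, and the jump to level $h(\mathfrak{T})$ uses exactly the defining property of the canonical sequence via $W$-invariance of the pair $(\mathcal{C}^W_{h(\mathfrak{T})-1}, \mathcal{C}^W_{h(\mathfrak{T})})$. Everything else — the factorization, the reduction of $T_{W_\mathcal{C}}$ to $T_W$ on $\mathcal{C}$-measurable functions, and the inclusion $\mathcal{C}(W)\subseteq \mathcal{C}$ — is already available in the preceding material.
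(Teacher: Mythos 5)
Your proof is correct and follows essentially the same route as the paper: a simultaneous induction on $h(\mathfrak{T})$, the Fubini factorization $f^W_{\mathfrak{T}}=\prod_i T_W\bigl(f^W_{\mathfrak{T}_i}\bigr)$, measurability via the $W$-invariant pair $(\mathcal{C}^W_n,\mathcal{C}^W_{n+1})$, and the inclusions $\mathcal{C}^W_n\subseteq\mathcal{C}(W)\subseteq\mathcal{C}$ to trade $T_{W_\mathcal{C}}$ for $T_W$ on $\mathcal{C}$-measurable functions (you via Claim~\ref{cl:How quotient acts}, the paper equivalently via Theorem~\ref{th:ConditionalExp}~(2) and $\mathbb{E}(W(x,{-})|\mathcal{C})=W_\mathcal{C}(x,{-})$). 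The only detail you assert without justification is that $\{\mathcal{C}^W_n\}_{n\in\mathbb{N}}$ is increasing; this is true (and implicitly needed in the paper's argument too), since $m$ is monotone and $\mathcal{C}^W_0$ is the trivial algebra.
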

\begin{proof}
We prove both statements simultaneously by induction.
If $h(\mathfrak{T})=0$, then the claim clearly holds.
Suppose that $h(\mathfrak{T})=n+1$ and that the claim holds for all finite rooted trees of height at most $n$.
Let $\{\mathfrak{T}_i\}_{i\in c(\mathfrak{T})}$ be the corresponding decomposition of $\mathfrak{T}$.
We have
\begin{equation*}
\begin{split}
f^W_{\mathfrak{T}}(x)= & \ \int_{X^{[c(\mathfrak{T})]}} \prod_{i\in [c(\mathfrak{T})]} f^W_{\mathfrak{T}_i}(y(i)) W(x,y(i)) \ d\mu^{\oplus c(\mathfrak{T})}(y) \\
= & \ \prod_{i\in [c(\mathfrak{T})]}\left(\int_{X} f^W_{\mathfrak{T}_i}(y) W(x,y) \ d\mu(y) \right)=\prod_{i\in [c(\mathfrak{T})]}\left(\int_{X} f^{W_\mathcal{C}}_{\mathfrak{T}_i}(y) W(x,y) \ d\mu(y) \right) \\
= & \ \prod_{i\in [c(\mathfrak{T})]}\left(\int_{X} f^{W_\mathcal{C}}_{\mathfrak{T}_i}(y) \mathbb{E}(W(x,{-})|\mathcal{C})(y) \ d\mu(y) \right) \\
= & \ \prod_{i\in [c(\mathfrak{T})]}\left(\int_{X} f^{W_\mathcal{C}}_{\mathfrak{T}_i}(y) W_\mathcal{C}(x,y) \ d\mu(y) \right)=f^{W_{\mathcal{C}}}_{\mathfrak{T}}(x) \\
\end{split}
\end{equation*}
for $\mu$-almost every $x\in X$, where the second equality is Fubini's Theorem, the third is by inductive hypothesis, the fourth follows from Theorem~\ref{th:ConditionalExp}~(2) together with $\mathcal{C}^W_n\subseteq \mathcal{C}(W)\subseteq \mathcal{C}$ and the fifth follows from the fact that $\mathbb{E}(W(x,{-})|\mathcal{C})=W_{\mathcal{C}}(x,{-})$ for $\mu$-almost every $x\in X$.
Note that by the definition of $\mathcal{C}^W_{n+1}$, we have that $f^W_{\mathfrak{T}}$ is $\mathcal{C}^W_{n+1}$-measurable by the second equality and that finishes the proof.
\end{proof}

\begin{proposition}\label{pr:tree functions and graphon}
Let $W$ be a graphon on $X$, $\mathfrak{T}=(T,v)$ be a finite rooted tree and $\mathcal{C}\in \Theta_\mu$ be $W$-invariant.
Then
$$t(T,W)=\int_{X} f^W_{\mathfrak{T}}(x) \ d\mu(x).$$
In particular, $t(T,W)=t(T,W_\mathcal{C})=t(T,{\bf U}[\nu_W])$ for every finite tree $T$.
\end{proposition}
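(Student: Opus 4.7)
The plan is to prove the displayed identity $t(T,W)=\int_X f^W_{\mathfrak{T}}\, d\mu$ by induction on the height of $\mathfrak{T}$, and then derive the ``in particular'' clause by combining this identity with Proposition~\ref{pr:tree functions and invariant algebras} and Corollary~\ref{cor:2 implies 3}.

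For the main identity, I would proceed as follows. Writing out $t(T,W)$ from the definition, integrate the root coordinate $y(v)=x$ last; the key point is that once the root is fixed, the tree $T$ factors as a disjoint union of subtrees $T_i$ attached to $v$ via edges $\{v,v_i\}$. Fubini's Theorem lets us write
\begin{equation*}
t(T,W)=\int_X \left(\int_{X^{V(T)\setminus\{v\}}} \prod_{i\in [c(\mathfrak{T})]} W(x,y(v_i))\prod_{\{u,w\}\in E(T_i)} W(y(u),y(w))\, d\mu^{\oplus(|V(T)|-1)}\right) d\mu(x),
\end{equation*}
and further split the inner integral as a product, one factor per child subtree. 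By the inductive hypothesis applied to each $\mathfrak{T}_i=(T_i,v_i)$ (which integrates out the vertices of $T_i$ with $v_i$ playing the role of its root), each factor equals $\int_X f^W_{\mathfrak{T}_i}(y)W(x,y)\,d\mu(y)$. Recollecting the product over $i$ gives precisely $f^W_{\mathfrak{T}}(x)$, and the identity follows after integration in $x$. The base case $h(\mathfrak{T})=0$ is immediate since $T$ is a single vertex and $f^W_{\mathfrak{T}}\equiv 1$.

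For the ``in particular'' part: fix an arbitrary root $v$ of $T$ and form $\mathfrak{T}=(T,v)$. Proposition~\ref{pr:tree functions and invariant algebras} gives $f^W_{\mathfrak{T}}=f^{W_{\mathcal{C}}}_{\mathfrak{T}}$ $\mu$-almost everywhere, so integrating and applying the main identity to both $W$ and $W_{\mathcal{C}}$ yields $t(T,W)=t(T,W_{\mathcal{C}})$. For the last equality, specialize $\mathcal{C}=\mathcal{C}(W)$ and invoke Corollary~\ref{cor:2 implies 3}, which asserts that $W/\mathcal{C}(W)$ and ${\bf U}[\nu_W]$ are isomorphic; by Proposition~\ref{pr:Quotient and Invariant algebra}~(i), $W_{\mathcal{C}(W)}$ and $W/\mathcal{C}(W)$ are weakly isomorphic, and since $t(T,{-})$ is a weak-isomorphism invariant this chain of equalities gives $t(T,W)=t(T,W_{\mathcal{C}(W)})=t(T,W/\mathcal{C}(W))=t(T,{\bf U}[\nu_W])$.

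I do not expect a serious obstacle: the first part is a standard Fubini/induction argument on trees, and the second is essentially bookkeeping using results already established in Sections~\ref{sec:Basic} and~\ref{sec:DIDD}. The only mild subtlety is ensuring that the product decomposition in the inductive step is applied in the right order---each subtree $T_i$ must be integrated with its root coordinate coupled to $x$ through the factor $W(x,y(v_i))$ before invoking the inductive hypothesis---but this is resolved cleanly by the definition of $f^W_{\mathfrak{T}}$, which builds in exactly this coupling.
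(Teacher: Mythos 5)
Your proposal is correct and follows essentially the same route as the paper: an induction on the height establishing the pinned-root identity $\int_{X^{V(T_i)}} W(x,y(v_i))\prod_{\{w,u\}\in E(T_i)}W(y(w),y(u))\,d\mu^{\oplus|V(T_i)|}(y)=\int_X W(x,y)f^W_{\mathfrak{T}_i}(y)\,d\mu(y)$ via Fubini, followed by the same bookkeeping through Proposition~\ref{pr:tree functions and invariant algebras}, Proposition~\ref{pr:Quotient and Invariant algebra}~(i) and Corollary~\ref{cor:2 implies 3} for the ``in particular'' clause. The only cosmetic difference is that the paper explicitly notes the symmetry of $W$ is used when regrouping the edge factors around the root, a point you use implicitly but which is harmless since $W$ is assumed to be a graphon.
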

\begin{proof}
If $h(\mathfrak{T})=0$, then the claim holds.
Suppose that $h(\mathfrak{T})=n+1$ and $\{\mathfrak{T}_i\}_{i\in c([\mathfrak{T}])}$ is the corresponding decomposition of $\mathfrak{T}$, where $\mathfrak{T}_i=(T_i,v_i)$.
It is easy to see by induction on $h(\mathfrak{T})$ together with Fubini's Theorem that for fixed $x\in [0,1]$ we have
\begin{equation*}
\int_{X} W(x,y) f^{W}_{\mathfrak{T}_i}(y) \ d\mu(y) =\int_{X^{V(T_i)}} W(x,y(v_i)) \prod_{\{w,u\}\in E(T_i)} W(y(w),y(u)) \ d\mu^{\oplus |V(T_i)|}(y)
\end{equation*}
and that gives immediately
\begin{equation*}
\begin{split}
t(T,W)= & \ \int_{X^{V(T)}} \prod_{\{w,u\}\in E(T)} W(y(w),y(u)) \ d\mu^{\oplus |V(T)|}(y) \\
= & \ \int_{X} \prod_{i\in [c(\mathfrak{T})]}\left(\int_{X^{V(T_i)}} W(x,y(v_i)) \prod_{\{w,u\}\in E(T_i)} W(y(w),y(u)) \ d\mu^{\oplus |V(T_i)|}(y)\right) \ d\mu(x) \\
= & \ \int_{X} \prod_{i\in [c(\mathfrak{T})]}\left(\int_{X} W(x,y) f^{W}_{\mathfrak{T}_i}(y) \ d\mu(y)\right) \ d\mu(x) \\
= & \ \int_{X} f^W_{\mathfrak{T}}(x) \ d\mu(x)
\end{split}
\end{equation*}
as desired.
Note that the assumption that $W$ is symmetric is implicitly used in the second equality.

It follows from Proposition~\ref{pr:tree functions and invariant algebras} that $t(T,W)=t(T,W_\mathcal{C})$.
In particular, we have $t(T,W)=t(T,W_{\mathcal{C}(W)})$ and $t(T,W_{\mathcal{C}(W)})=t(T,{\bf U}[\nu_W])$ by Proposition~\ref{pr:Quotient and Invariant algebra} together with Corollary~\ref{cor:2 implies 3}.
\end{proof}

\subsection{Collection $\mathcal{T}$}\label{subsec:def of T}

In this section we work exclusively with the space $\mathbb{M}$.
We define a collection $\mathcal{T}\subseteq C(\mathbb{M},\mathbb{R})$ that is closed under multiplication and contains ${\bf 1}_{\mathbb{M}}$.
The construction proceeds recursively on $n\in \mathbb{N}$, where in step $n\in \mathbb{N}$ we construct $\mathcal{T}_n\subseteq C(\mathbb{M},\mathbb{R})$ that factors through $\mathbb{M}_n$, i.e., for every $f\in \mathcal{T}_n$ there is $f'\in C(\mathbb{M}_n,\mathbb{R})$ such that $f=f'\circ p_{n,\infty}$, and is uniformly dense in $C(\mathbb{M}_n,\mathbb{R})\circ p_{n,\infty}$.

The set $\mathcal{T}_{n+1}$ is constructed from $\mathcal{T}_n$ using two operations.
Informally, these operations correspond to the following constructions on finite trees, the correspondence is made precise in the proof of Proposition~\ref{pr:T and tree functions}.
{\bf (I)} Given a rooted tree we add an extra vertex that is the new root and its only neighbor is the old root.
{\bf (II)} Given a sequence of rooted trees $\{\mathfrak{T}^j\}_{j\in [k]}$ we define a rooted tree $\mathfrak{T}$ as a disjoint union of $\{\mathfrak{T}^j\}_{j\in [k]}$ and glue the roots to a single vertex, the new root.

\begin{definition}
Let $n,k\in \mathbb{N}$ and $f,f_1,\dots, f_k\in C(\mathbb{M},\mathbb{R})$ be such that $f$ factors through $\mathbb{M}_n$.
Then define for every $\alpha\in \mathbb{M}$
\begin{itemize}
	\item [{\bf(I)}] $F(f,n)(\alpha)=\int_{\mathbb{M}_n} f' \ d\alpha(n+1)$, where $f'\in C(\mathbb{M}_n,\mathbb{R})$ and $f=f'\circ p_{n,\infty}$,
	\item [{\bf(II)}] $G(f_1,\dots, f_k)(\alpha)=\prod_{j\in [k]}f_j(\alpha)$
\end{itemize}
\end{definition}

It is easy to see by the definition of $\mathbb{M}$ that $F(f,n)$ and $G(f_1,\dots, f_k)$ are elements of $C(\mathbb{M},\mathbb{R})$ and that $F(f,n)$ factors through $\mathbb{M}_{n+1}$.

We put $\mathcal{T}_0=\{{\bf 1}_{\mathbb{M}}\}$.
Suppose that $\mathcal{T}_n$ is defined.
Then let
$$\mathcal{T}_{n+1}=\left\{G(f_1,\dots, f_k):\forall i\in [k] \ \exists g_i\in \mathcal{T}_n \ (g_i=f_i \ \vee F(g_i,n)=f_i)\right\},$$
i.e., first apply {\bf (I)} on $\mathcal{T}_n$ and then {\bf (II)} on all new and old functions.
Finally, we put $\mathcal{T}=\bigcup_{n\in \mathbb{N}}\mathcal{T}_n$.

\begin{proposition}\label{pr:separate points}
The collection $\mathcal{T}$ is closed under multiplication, contains ${\bf 1}_{\mathbb{M}}$ and separates points of $\mathbb{M}$.
\end{proposition}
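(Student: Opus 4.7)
The first two claims are direct consequences of the definitions. Since ${\bf 1}_{\mathbb{M}}\in\mathcal{T}_0\subseteq\mathcal{T}$, only multiplicative closure needs comment. First observe $\mathcal{T}_n\subseteq\mathcal{T}_{n+1}$ by writing $f=G(f)$ (singleton product, which fits the definition of $\mathcal{T}_{n+1}$ because $f\in\mathcal{T}_n$). Then for $f,g\in\mathcal{T}$ pick a common index $n$ with $f,g\in\mathcal{T}_n$; since $fg=G(f,g)$, we get $fg\in\mathcal{T}_{n+1}\subseteq\mathcal{T}$.

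The real content is separation of points, which I would handle by induction on $n$ to prove the stronger statement
\[
(*_n)\qquad p_{n,\infty}(\alpha)\neq p_{n,\infty}(\beta)\ \Longrightarrow\ \exists f\in\mathcal{T}_n:\ f(\alpha)\neq f(\beta).
\]
The base case $n=0$ is vacuous since $\mathbb{M}_0$ is a single point. For the inductive step, the multiplicative closure of $\mathcal{T}_n$ together with $(*_n)$ makes the linear span of $\mathcal{T}_n$, viewed as functions on $\mathbb{M}_n$ via $p_{n,\infty}$, a unital point-separating subalgebra of $C(\mathbb{M}_n,\mathbb{R})$. Stone--Weierstrass (Theorem~\ref{th:StoneWeierstrass}) then yields uniform density of this span in $C(\mathbb{M}_n,\mathbb{R})$.

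Now take $\alpha,\beta\in\mathbb{M}$ with $p_{n+1,\infty}(\alpha)\neq p_{n+1,\infty}(\beta)$. If they already differ at some coordinate $\leq n$, combine $(*_n)$ with the inclusion $\mathcal{T}_n\subseteq\mathcal{T}_{n+1}$. Otherwise $p_{n,\infty}(\alpha)=p_{n,\infty}(\beta)$ while $\alpha(n+1)\neq\beta(n+1)$ as Borel measures on $\mathbb{M}_n$. Riesz--Markov together with the uniform density above forces some single $f\in\mathcal{T}_n$, written $f=f'\circ p_{n,\infty}$, to satisfy $\int_{\mathbb{M}_n} f'\,d\alpha(n+1)\neq\int_{\mathbb{M}_n} f'\,d\beta(n+1)$: if every $f\in\mathcal{T}_n$ gave equal integrals, then so would every linear combination and, by density, every continuous function, forcing the two measures to coincide. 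For such an $f$, the function $F(f,n)$ lies in $\mathcal{T}_{n+1}$ (since $F(f,n)=G(F(f,n))$ fits the definition with $f\in\mathcal{T}_n$) and separates $\alpha$ from $\beta$ by the definition of $F$. This completes the induction. Separation of points of $\mathbb{M}$ follows at once: $\alpha\neq\beta$ differ at some coordinate $n$, whence $(*_n)$ supplies the separating function.

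The only conceptually nontrivial move is the passage from the Stone--Weierstrass conclusion ``some linear combination separates'' to the pointwise statement ``some single element separates'', which is needed because the operation $F$ accepts only inputs from $\mathcal{T}_n$, not from its linear span. This obstacle dissolves the instant one tests linear combinations against the two \emph{fixed} measures $\alpha(n+1)$ and $\beta(n+1)$: linear separation against a fixed pair of functionals automatically forces elementwise separation. Beyond this, the argument is a straightforward unwinding of the recursive definition of $\mathcal{T}_n$.
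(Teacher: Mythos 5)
Your proof is correct and follows essentially the same route as the paper: the same induction on $n$ showing that $\mathcal{T}_n$ separates points of $\mathbb{M}$ that differ in a coordinate $\le n$, with the new-coordinate case handled by separating the measures $\alpha(n+1),\beta(n+1)$ via a single element of $\mathcal{T}_n$ and then applying $F({-},n)$. The only cosmetic difference is that you re-derive inline (Stone--Weierstrass on the linear span plus testing against the two fixed measures) what the paper simply invokes as Corollary~\ref{cor:SepMeasures}.
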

\begin{proof}
We only need to show that $\mathcal{T}$ separates points.
We show by induction on $n\in \mathbb{N}$ that $\mathcal{T}_n$ separates $\alpha, \beta\in \mathbb{M}$ whenever there is $i\in [n]$ such that $\alpha(i)\not=\beta(i)$.
This clearly suffices to prove the claim.
Note that each $\mathcal{T}_n$ is closed under multiplication and contain ${\bf 1}_{\mathbb{M}}$ by {\bf (II)}.

If $n=0$ there is nothing to prove.
Suppose that the claim holds for $n\in \mathbb{N}$.
Let $\alpha\not=\beta\in \mathbb{M}$ be such that $\alpha(i)\not=\beta(i)$ for some $i\in [n+1]$.
Either there is $f\in \mathcal{T}_n$ such that $f(\alpha)\not=f(\beta)$ or $i=n+1$ by the inductive assumption.
Let $\mathcal{T}'_n=\{f'\in C(\mathbb{M}_n,\mathbb{R}):\exists f\in \mathcal{T}_n \ f=f'\circ p_{n,\infty}\}$.
It follows by the inductive assumption that $\mathcal{T}'_n$ is closed under multiplication, contain ${\bf 1}_{\mathbb{M}_n}$ and separates points of $\mathbb{M}_n$.
By Corollary~\ref{cor:SepMeasures}, there is $f'\in \mathcal{T}'_n$ such that 
$$\int_{\mathbb{M}_n} f' \ d\alpha(n+1)\not=\int_{\mathbb{M}_n} f' \ d\beta(n+1).$$
By {\bf (I)}, we have $F(f,n)(\alpha)\not=F(f,n)(\beta)$, where $f\in \mathcal{T}_n$ is such that $f=f'\circ p_{n,\infty}$.
Since $F(f,n)\in \mathcal{T}_{n+1}$ the proof is finished.
\end{proof}

\begin{proposition}\label{pr:T and tree functions}
Let $f\in \mathcal{T}$.
Then there is a finite rooted tree $\mathfrak{T}$ such that for every DIDM $\nu$ we have
$$f(\alpha)=f^{{\bf U}[\nu]}_{\mathfrak{T}}(\alpha)$$
for $\nu$-almost every $\alpha\in \mathbb{M}$.
\end{proposition}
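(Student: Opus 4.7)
The plan is to proceed by induction on $n$, showing that every $f \in \mathcal{T}_n$ admits a finite rooted tree $\mathfrak{T}$ (of height at most $n$) for which the desired identity holds for every DIDM $\nu$. The base case $n=0$ will be immediate: $f = {\bf 1}_{\mathbb{M}}$ corresponds to the one-vertex rooted tree, for which $f^W_{\mathfrak{T}} \equiv 1$ by definition. For the inductive step I will analyze the two operations generating $\mathcal{T}_{n+1}$ from $\mathcal{T}_n$ separately and then combine them.

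For operation \textbf{(II)}, if each $f_j$ corresponds to a rooted tree $\mathfrak{T}^j$, I will take $\mathfrak{T}$ to be the rooted tree obtained by taking disjoint copies of the $\mathfrak{T}^j$ and identifying all their roots to a common new root. The corresponding decomposition of $\mathfrak{T}$ will be the concatenation of the decompositions of the $\mathfrak{T}^j$, so Fubini applied to the defining integral of $f^W_{\mathfrak{T}}$ will give $f^W_{\mathfrak{T}}(\alpha) = \prod_j f^W_{\mathfrak{T}^j}(\alpha)$, matching $G(f_1,\dots,f_k)(\alpha)$ by the inductive hypothesis.

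For operation \textbf{(I)}, given $g \in \mathcal{T}_n$ associated with a tree $\mathfrak{S}$, I will let $\mathfrak{T}$ be obtained by appending a new root $v$ connected only to the old root of $\mathfrak{S}$, so $c(\mathfrak{T})=1$ and its decomposition is just $\{\mathfrak{S}\}$. Setting $W = {\bf U}[\nu]$, Claim~\ref{cl:Kernel from DIDM} yields $W(\alpha,{-}) = d\mu_\alpha/d\nu$ for $\nu$-a.e.\ $\alpha$, while $\nu(\mathbb{P})=1$ together with the defining property of $\mu_\alpha$ gives $\alpha(n+1) = (p_{n,\infty})_*\mu_\alpha$ for such $\alpha$. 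Chaining these with the inductive hypothesis $f^W_{\mathfrak{S}} = g$ ($\nu$-a.e.) and the factorization $g = g'\circ p_{n,\infty}$ through $\mathbb{M}_n$, the computation
$$f^W_{\mathfrak{T}}(\alpha) = \int_\mathbb{M} W(\alpha,\beta)\, f^W_{\mathfrak{S}}(\beta)\, d\nu(\beta) = \int_\mathbb{M} g\, d\mu_\alpha = \int_{\mathbb{M}_n} g'\, d\alpha(n+1) = F(g,n)(\alpha)$$
will complete this half of the step, where the middle equality uses $\mu_\alpha \ll \nu$. Combining (I) and (II) will then handle arbitrary elements of $\mathcal{T}_{n+1}$; the application of (I) requires that $g$ factor through $\mathbb{M}_n$, which I will verify as a short parallel induction directly from the form of (I) and (II).

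The main technical obstacle I foresee is bookkeeping the $\nu$-null exceptional sets accumulating through the induction: both the inductive hypothesis and the identification of ${\bf U}[\nu](\alpha,{-})$ with $d\mu_\alpha/d\nu$ introduce one. Absolute continuity $\mu_\alpha \ll \nu$ from the DIDM condition is precisely what ensures that these exceptional sets remain null when integrated against $\mu_\alpha$ for $\nu$-a.e.\ $\alpha$, letting the a.e.\ identity propagate cleanly across induction steps.
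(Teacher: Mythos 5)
Your proposal is correct and follows essentially the same route as the paper's proof: induction on the stages $\mathcal{T}_n$, handling operation \textbf{(I)} by appending a new root and using ${\bf U}[\nu](\alpha,{-})=\frac{d\mu_\alpha}{d\nu}$ together with $(p_{n,\infty})_*\mu_\alpha=\alpha(n+1)$, and operation \textbf{(II)} by gluing the trees at their roots and factoring the integral via Fubini. Your explicit attention to the $\nu$-null exceptional sets and to the fact that elements of $\mathcal{T}_n$ factor through $\mathbb{M}_n$ matches what the paper uses implicitly, so nothing further is needed.
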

\begin{proof}
We prove the claim by induction on $n\in \mathbb{N}$.
It is easy to see that if $f={\bf 1}_\mathbb{M}$, then $\mathfrak{T}$ that satisfies $h(\mathfrak{T})=0$ works, i.e., the claim holds for $\mathcal{T}_0$.

Suppose that the claim holds for $\mathcal{T}_n$, where $n\in \mathbb{N}$.
Let $f=F(g,n)$ for some $g\in \mathcal{T}_n$.
Fix a finite rooted tree  $\mathfrak{S}=(S,w)$ that corresponds to $g$ and $g'\in C(\mathbb{M}_n,\mathbb{R})$ such that $g=g'\circ p_{n,\infty}$.
Define a finite rooted tree $\mathfrak{T}$ such that $c(\mathfrak{T})=1$ and $\{\mathfrak{S}\}$ is the corresponding decomposition of $\mathfrak{T}$, i.e., we add an extra vertex that is the new root and its only neighbor is the old root.
Given a DIDM $\nu$ we have
\begin{equation*}
\begin{split}
f^{{\bf U}[\nu]}_{\mathfrak{T}}(\alpha)= & \ \int_{\mathbb{M}} f^{{\bf U}[\nu]}_{\mathfrak{S}}(\beta){\bf U}[\nu](\alpha,\beta) \ d\nu(\beta) =\int_{\mathbb{M}} g(\beta){\bf U}[\nu](\alpha,\beta) \ d\nu(\beta) \\
= & \ \int_{\mathbb{M}} g \ d\mu_\alpha=\int_{\mathbb{M}} g'\circ p_{n,\infty} \ d\mu_\alpha=\int_{\mathbb{M}_n} g' \ d(p_{n,\infty})_*\mu_\alpha \\
= & \ \int_{\mathbb{M}_n} g' \ d\alpha(n+1)=F(g,n)(\alpha)=f(\alpha)
\end{split}
\end{equation*}
for $\nu$-almost every $\alpha\in \mathbb{M}$.

Let $f\in \mathcal{T}_{n+1}$.
By the definition, we have $f=G(f_1,\dots, f_k)$ for some $f_i$ such that either $f_i\in \mathcal{T}_n$ or $f_i=F(g_i,n)$ for some $g_i\in \mathcal{T}_n$.
In both cases, either by inductive assumption or by previous paragraph, we find a finite rooted tree $\mathfrak{T}^i$ that satisfy the claim for $f_i$ for every $i\in [k]$.
Let $\{\mathfrak{T}^i_{j}\}_{j\in [c(\mathfrak{T}^i)]}$ be the corresponding decomposition of $\mathfrak{T}_i$, where $\mathfrak{T}^i_j=(T^i_j,v^i_j)$ for every $i\in [k]$.
Put $I=\{(i,j):i\in [k] \ j\in [c(\mathfrak{T}^i)]\}$ and define $\mathfrak{T}=(T,v)$ as
\begin{equation*}
V(T)=\{v\}\cup\bigcup_{(i,j)\in I}V(T^i_{j}) \ \operatorname{and} \ E(T)=\bigcup_{(i,j)\in I}\{v,v^i_j\}\cup E(T^i_j).
\end{equation*}
Note that $\{\mathfrak{T}^i_{j}\}_{(i,j)\in I}$ is the corresponding decomposition of $\mathfrak{T}$.
Given a DIDM $\nu$ we have
\begin{equation*}
\begin{split}
f^{{\bf U}[\nu]}_{\mathfrak{T}}(\alpha)= & \ \int_{\mathbb{M}^I} \prod_{(i,j)\in I} f^{{\bf U}[\nu]}_{\mathfrak{T}^i_j}(\beta(i,j)){\bf U}[\nu](\alpha,\beta(i,j)) \ d\nu^{\oplus |I|}(\beta)\\
= & \ \prod_{i\in [k]} \int_{\mathbb{M}^{[c(\mathfrak{T}^i)}]} \prod_{j\in c([\mathfrak{T}^i])} f^{{\bf U}[\nu]}_{\mathfrak{T}^i_j}(\beta(j)) {\bf U}[\nu](\alpha,\beta(j)) \ d\nu^{\oplus c(\mathfrak{T}^i)}(\beta) \\
= & \ \prod_{i\in [k]} f^{{\bf U}[\nu]}_{\mathfrak{T}^i}(\alpha)=\prod_{i\in [k]}f_i(\alpha)=f(\alpha)
\end{split}
\end{equation*}
for $\nu$-almost every $\alpha\in \mathbb{M}$ and that finishes the proof.
\end{proof}

\begin{corollary}\label{cor:1 implies 2}
The map $W\mapsto \nu_W$ is continuous when $\mathcal{W}_0$ is endowed with the cut-distance and $\mathscr{P}(\mathbb{M})$ with the weak* topology.
Moreover, if $U$ and $W$ are graphons such that $\nu_W\not=\nu_U$, then there is a finite tree $T$ such that $t(T,W)\not=t(T,U)$.
\end{corollary}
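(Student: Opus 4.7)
The plan is to deduce both statements from the already-established correspondence between the collection $\mathcal{T}$ and tree densities. I will handle the second (discriminating) statement first, since the first (continuity) statement reduces to the same key computation via Stone--Weierstrass.

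For the second statement, suppose $\nu_W \ne \nu_U$. By Proposition~\ref{pr:separate points}, $\mathcal{T} \subseteq C(\mathbb{M}, \mathbb{R})$ is closed under multiplication, contains ${\bf 1}_\mathbb{M}$, and separates points of $\mathbb{M}$. Applying Corollary~\ref{cor:SepMeasures} yields an $f \in \mathcal{T}$ with $\int_\mathbb{M} f \, d\nu_W \ne \int_\mathbb{M} f \, d\nu_U$. By Proposition~\ref{pr:T and tree functions}, I extract from $f$ a finite rooted tree $\mathfrak{T} = (T,v)$ such that $f = f^{{\bf U}[\nu]}_{\mathfrak{T}}$ $\nu$-almost everywhere for \emph{every} DIDM $\nu$. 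Since $\nu_W$ and $\nu_U$ are DIDM by Proposition~\ref{pr:nu_W is DIDD}, and since ${\bf U}[\nu_W]$ and ${\bf U}[\nu_U]$ are graphons on $(\mathbb{M},\nu_W)$ and $(\mathbb{M},\nu_U)$ respectively by Corollary~\ref{cor:2 implies 3}, Proposition~\ref{pr:tree functions and graphon} applies and gives
\begin{equation*}
\int_\mathbb{M} f \, d\nu_W \;=\; \int_\mathbb{M} f^{{\bf U}[\nu_W]}_{\mathfrak{T}} \, d\nu_W \;=\; t(T, {\bf U}[\nu_W]) \;=\; t(T, W),
\end{equation*}
and analogously $\int_\mathbb{M} f \, d\nu_U = t(T,U)$. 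Thus $t(T,W) \ne t(T,U)$, and $T$ is the desired finite tree.

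For the continuity statement, I observe that the linear span of $\mathcal{T}$ is an algebra in $C(\mathbb{M},\mathbb{R})$ containing ${\bf 1}_\mathbb{M}$ and separating points of $\mathbb{M}$, hence uniformly dense by the classical Stone--Weierstrass theorem. Since $\mathbb{M}$ is a compact metric space, $\mathscr{P}(\mathbb{M})$ is weak*-metrizable, so sequential continuity suffices; and since every $\nu_W$ is a probability measure, convergence of integrals against a uniformly dense subfamily of $C(\mathbb{M},\mathbb{R})$ implies weak* convergence. Given $W_n \to W$ in cut-distance, I therefore need only verify $\int_\mathbb{M} f \, d\nu_{W_n} \to \int_\mathbb{M} f \, d\nu_W$ for every $f \in \mathcal{T}$. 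By the same chain of equalities displayed above, this reduces to $t(T, W_n) \to t(T, W)$ for the single finite tree $T$ associated with $f$, which is the standard equivalence between cut-distance convergence and tree-density convergence recalled in the introduction.

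The key technical input is Proposition~\ref{pr:T and tree functions}, which produces, from $f \in \mathcal{T}$ alone, a tree $\mathfrak{T}$ that simultaneously represents $f$ as $f^{{\bf U}[\nu]}_{\mathfrak{T}}$ for every DIDM $\nu$; with this uniformity in hand, neither statement presents an additional obstacle. The only point to take care of is that the tree $T$ extracted depends solely on $f$ (not on the DIDM), so that a single $T$ simultaneously separates $\nu_W$ from $\nu_U$ in the first argument, and controls the convergence along the sequence $(\nu_{W_n})$ in the second.
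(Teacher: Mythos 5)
Your proof is correct and follows essentially the same route as the paper: apply Corollary~\ref{cor:SepMeasures} (resp.\ Stone--Weierstrass density of $\mathcal{T}$) together with the chain $\int_{\mathbb{M}} f\, d\nu_W = t(T,{\bf U}[\nu_W]) = t(T,W)$ from Propositions~\ref{pr:T and tree functions} and~\ref{pr:tree functions and graphon}, and then use the equivalence of cut-distance convergence with convergence of tree densities. Your extra care in passing to the linear span of $\mathcal{T}$ before invoking Stone--Weierstrass, and in noting that the extracted tree depends only on $f$, are minor refinements of the same argument.
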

\begin{proof}
It follows from Theorem~\ref{th:StoneWeierstrass} together with Proposition~\ref{pr:separate points} that $\mathcal{T}$ is uniformly dense in $C(\mathbb{M},\mathbb{R})$.
It follows that the weak* topology on $\mathscr{P}(\mathbb{M})$ is generated by functionals that correspond to elements of $\mathcal{T}$.
Let $W_n\xrightarrow{\delta_\Box} W$ and $f\in \mathcal{T}$.
Fix a finite (rooted) tree $T$ that corresponds to $f$ as in Proposition~\ref{pr:T and tree functions}.
By Propositions~\ref{pr:tree functions and graphon},~\ref{pr:T and tree functions}, we have
$$\int_{\mathbb{M}} f \ d\nu_{W_n}=t(T,{\bf U}[\nu_{W_n}])=t(T,W_n)\to t(T,W)=t(T,{\bf U}[\nu_W])=\int_{\mathbb{M}} f \ d\nu_W.$$
That shows that the assignment is continuous.

Suppose that $\nu_W\not=\nu_U$.
By Corollary~\ref{cor:SepMeasures} together with Proposition~\ref{pr:separate points}, we find $f\in \mathcal{T}$ such that 
$$\int_{\mathbb{M}} f\ \nu_W\not= \int_{\mathbb{M}} f \ d\nu_U.$$
A finite (rooted) tree $T$ that corresponds to $f$ as in Proposition~\ref{pr:T and tree functions} satisfies
$$t(T,W)\not= t(T,U)$$
by Proposition~\ref{pr:tree functions and graphon}.
\end{proof}

\section{Proof of Theorem~\ref{th:main}}\label{sec:Proof}

We recall the statement.

\begin{theorem}
Let $W$ and $U$ be graphons.
Then the following are equivalent:
\begin{enumerate}
	\item $t(T,W)=t(T,U)$ for every finite tree $T$,
	\item $\nu_W=\nu_U$,
	\item $W/\mathcal{C}(W)$ and $U/\mathcal{C}(U)$ are isomorphic,
	\item there is a Markov operator $S:L^2(X,\mu)\to L^2(X,\mu)$ such that $T_W\circ S=S\circ T_U$,
	\item there is a $W$-invariant sub-$\sigma$-algebra $\mathcal{C}$ and a $U$-invariant sub-$\sigma$-algebra $\mathcal{D}$ such that $W_\mathcal{C}$ and $U_{\mathcal{D}}$ are weakly isomorphic.
\end{enumerate}
\end{theorem}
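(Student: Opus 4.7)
My plan is to establish the equivalence by closing the cycle $(1)\Rightarrow(2)\Rightarrow(3)\Rightarrow(4)\Rightarrow(5)\Rightarrow(1)$, following the strategy outlined in Section~\ref{sec:strategy}. Three of these implications follow by citing results from the preceding sections. For $(1)\Rightarrow(2)$, I invoke the second half of Corollary~\ref{cor:1 implies 2}: each $f\in\mathcal{T}$ corresponds via Proposition~\ref{pr:T and tree functions} and Proposition~\ref{pr:tree functions and graphon} to a finite tree $T$ with $\int f\,d\nu_W=t(T,W)$, and the separating property of $\mathcal{T}$ (Proposition~\ref{pr:separate points}) combined with Corollary~\ref{cor:SepMeasures} forces $\nu_W=\nu_U$ once all tree densities agree. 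For $(2)\Rightarrow(3)$, Corollary~\ref{cor:2 implies 3} shows that $W/\mathcal{C}(W)$ is isomorphic to ${\bf U}[\nu_W]$, so equal DIDM yield isomorphic quotients. For $(5)\Rightarrow(1)$, Proposition~\ref{pr:tree functions and graphon} gives $t(T,W)=t(T,W_\mathcal{C})$ and $t(T,U)=t(T,U_\mathcal{D})$, while weak isomorphism of graphons preserves all homomorphism densities.

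For $(3)\Rightarrow(4)$, the idea is to glue three Markov operators. If $\varphi$ is a measure preserving bijection witnessing $W/\mathcal{C}(W)\cong U/\mathcal{C}(U)$, its composition operator $J_\varphi$ is a Markov isomorphism between the corresponding quotient $L^2$-spaces that intertwines $T_{W/\mathcal{C}(W)}$ and $T_{U/\mathcal{C}(U)}$. Define $S=I_{\mathcal{C}(W)}\circ J_\varphi\circ S_{\mathcal{C}(U)}$; this is Markov as a composition of Markov operators. Using Proposition~\ref{pr:Quotient and Invariant algebra}(iii) together with its adjoint identity $T_W\circ I_{\mathcal{C}(W)}=I_{\mathcal{C}(W)}\circ T_{W/\mathcal{C}(W)}$ (which follows by taking adjoints and invoking Claim~\ref{cl:Self Adjoint=Graphon}), a direct computation yields $T_W\circ S=S\circ T_U$.

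The main obstacle is $(4)\Rightarrow(5)$. Taking adjoints of $T_W\circ S=S\circ T_U$ (using self-adjointness of $T_W$ and $T_U$) gives $S^*\circ T_W=T_U\circ S^*$, whence $T_U$ commutes with the self-adjoint Markov operator $S^*S$, and symmetrically $T_W$ commutes with $SS^*$. The Mean Ergodic Theorem (Theorem~\ref{th:Mean Ergodic}) implies that the Cesàro averages $\tfrac{1}{n}\sum_{k\in[n]}(S^*S)^k$ converge strongly to the orthogonal projection $P_U$ onto $\ker(I-S^*S)$; since $S^*S$ is Markov and self-adjoint, so is $P_U$, and by the duality developed in Appendix~\ref{App D} it is of the form $\mathbb{E}({-}|\mathcal{D})$ for some $\mathcal{D}\in\Theta_\mu$. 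Commutation of $T_U$ with each $(S^*S)^k$ passes to $P_U$, so $\mathcal{D}$ is $U$-invariant; a symmetric argument produces a $W$-invariant $\mathcal{C}$ from $SS^*$. To deliver the required weak isomorphism, I will observe that $f\in L^2(X,\mathcal{D},\mu)$ satisfies $S^*Sf=f$, giving $\|Sf\|_2=\|f\|_2$ and $SS^*(Sf)=Sf$; hence $S$ restricts to a Markov isomorphism $L^2(X,\mathcal{D},\mu)\to L^2(X,\mathcal{C},\mu)$ with inverse $S^*$. By Claim~\ref{cl:How quotient acts}, $T_W$ and $T_U$ act as $T_{W_\mathcal{C}}$ and $T_{U_\mathcal{D}}$ on these subspaces, so this restriction descends through $S_\mathcal{C}$ and $S_\mathcal{D}$ to a Markov isomorphism of the quotient spaces that intertwines $T_{W/\mathcal{C}}$ and $T_{U/\mathcal{D}}$. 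Applying Theorem~\ref{th:Markov Isomorphism} produces a measure preserving bijection inducing an isomorphism $W/\mathcal{C}\cong U/\mathcal{D}$, and Proposition~\ref{pr:Quotient and Invariant algebra}(i) then gives the desired weak isomorphism $W_\mathcal{C}\cong U_\mathcal{D}$.
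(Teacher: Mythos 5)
Your proposal is correct and follows essentially the same route as the paper: the same cycle $(1)\Rightarrow(2)\Rightarrow(3)\Rightarrow(4)\Rightarrow(5)\Rightarrow(1)$, with $(1)\Rightarrow(2)$, $(2)\Rightarrow(3)$ and $(5)\Rightarrow(1)$ cited from the same results, $(3)\Rightarrow(4)$ built from $I_{\mathcal{C}}$, $S_{\mathcal{C}}$ and the composition operator of the isomorphism via Proposition~\ref{pr:Quotient and Invariant algebra}(iii) and its adjoint, and $(4)\Rightarrow(5)$ via the Mean Ergodic Theorem applied to $S^*S$ and $SS^*$, the Markov projection/sub-$\sigma$-algebra duality, and Theorem~\ref{th:Markov Isomorphism}. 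The only differences are cosmetic: you orient $S$ so that $T_W\circ S=S\circ T_U$ holds literally, and you replace the paper's Ces\`aro-limit verification of $P\circ S=S\circ Q$ by the direct identity $SS^*(Sf)=Sf$ for $f$ fixed by $S^*S$, leaving only the routine final check (done explicitly in the paper) that the measure preserving bijection from Theorem~\ref{th:Markov Isomorphism} indeed transports $U/\mathcal{D}$ to $W/\mathcal{C}$.
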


\begin{proof}[Proof of Theorem~\ref{th:main}]

{\bf (1) $\Rightarrow$ (2)} Follows immediately from Corollary~\ref{cor:1 implies 2}.

{\bf (2) $\Rightarrow$ (3)}.
Follows from Corollary~\ref{cor:2 implies 3} applied twice to both $W$ and $U$.

{\bf (3) $\Rightarrow$ (4)}.
See paragraph after Claim~\ref{cl:Basic 1} for definitions.
We let $Y=X/\mathcal{C}(W)$, $Z=X/\mathcal{C}(U)$, $\mu_Y=\mu/\mathcal{C}(W)$, $\mu_Z=\mu/\mathcal{C}(U)$, $W_Y=W/\mathcal{C}(W)$ and $U_Z=U/\mathcal{C}(U)$.
By (3), there is a measure preserving isomorphism $j:Y\to Z$ such that
$$W_Y(x,y)=U_Z(j(x),j(y))$$
for $\left(\mu_Y\times \mu_Y\right)$-almost every $(x,y)\in Y\times Y$.
The map
$$S_j:L^2(Y,\mu_Y)\to L^2(Z,\mu_Z)$$
defined as $S_j(f)(x)=f(j^{-1}(x))$ is a Markov isomorphism by Theorem~\ref{th:Markov Isomorphism} and it is routine to check that $S_j\circ T_{W_Y}=T_{U_Z}\circ S_j$.

By Proposition~\ref{pr:Quotient and Invariant algebra}~(iii), we have $T_{W_Y}\circ S_{\mathcal{C}(W)}=S_{\mathcal{C}(W)}\circ T_W$ and 
$$I_{\mathcal{C}(U)}\circ T_{U_Z}=\left(T_{U_Z}\circ S_{\mathcal{C}(U)}\right)^*=\left(S_{\mathcal{C}(U)}\circ T_U\right)^*=T_U\circ I_{\mathcal{C}(U)}.$$
We define a Markov operator $S=I_{\mathcal{C}(U)}\circ S_j\circ S_{\mathcal{C}(W)}$.
It is easy to check that
$$S\circ T_W=T_U\circ S$$
and that finishes the proof.

{\bf (4) $\Rightarrow$ (5)}.
Let $S$ be a Markov operator such that $T_W\circ S=S\circ T_U$.
Then $S\circ S^*$ and $S^*\circ S$ are self-adjoint Markov operators by Proposition~\ref{pr:Basic Markov}.
We have
\begin{equation*}
\begin{split}
T_W\circ (S\circ S^*)= & \ S\circ (T_U \circ S^*)= S\circ (S\circ T_U)^* \\
= & \ S\circ (T_W\circ S)^*=(S\circ S^*)\circ T_W
\end{split}
\end{equation*}
and similarly $T_U\circ (S^*\circ S)=(S^*\circ S)\circ T_U$ because $T_W$ and $T_U$ are self-adjoint by Claim~\ref{cl:Self Adjoint=Graphon}.
In particular, we have 
$$T_W\circ \left(\sum_{k\in [n]}(S\circ S^*)^k\right)=\left(\sum_{k\in [n]}(S\circ S^*)^k\right)\circ T_W \ \operatorname{and} \ T_U\circ \left(\sum_{k\in [n]}(S^*\circ S)^k\right)=\left(\sum_{k\in [n]}(S^*\circ S)^k\right)\circ T_U$$
for every $n\in \mathbb{N}$.

Let $P$ be the orthogonal projection onto $\{f\in L^2(X,\mu):(S\circ S^*)(f)=f\}$ and $Q$ be the orthogonal projection onto $\{f\in L^2(X,\mu):(S^*\circ S)(f)=f\}$.
By the Mean Ergodic Theorem, Theorem~\ref{th:Mean Ergodic}, we have
$$\left\|\frac{1}{n}\sum_{k\in [n]}(S\circ S^*)^k(f)-P(f)\right\|_2\to 0 \ \text{ and } \ \left\|\frac{1}{n}\sum_{k\in [n]}(S^*\circ S)^k(f)-Q(f)\right\|_2\to 0$$
for every $f\in L^2(X,\mu)$.
It follows from Proposition~\ref{pr:Basic Markov} that $P$ and $Q$ are Markov projections and by Theorem~\ref{th:MarkovProj} that there are relatively complete sub-$\sigma$-algebras $\mathcal{C}$ and $\mathcal{D}$ such that $P=\mathbb{E}({-}|\mathcal{C})$ and $Q=\mathbb{E}({-}|\mathcal{D})$.

Let $f\in L^2(X,\mu)$.
%Then we have
%\begin{equation*}
%\begin{split}
%\left\|\left(T_W\circ P\right)(f)-\left(P\circ T_W\right)(f)\right\|_2\le & \ \left\|\left(T_W\circ P\right)(f)-\left(T_W\circ \frac{1}{n}\sum_{k<n}(S\circ S^*)^k\right)(f)\right\|_2 \\
%& \ + \left\|\left(T_W\circ \frac{1}{n}\sum_{k<n}(S\circ S^*)^k\right)(f)-\left(\frac{1}{n}\sum_{k<n}(S\circ S^*)^k\circ T_W\right)(f)\right\|_2 \\
%& \ + \left\|\left(\frac{1}{n}\sum_{k<n}(S\circ S^*)^k\circ T_W\right)(f)-\left(P\circ T_W\right)(f)\right\|_2 \\
%= & \ \left\|T_W\left(P(f)-\frac{1}{n}\sum_{k<n}(S\circ S^*)^k(f)\right)\right\|_2 \\
%& \ +\left\|\frac{1}{n}\sum_{k<n}(S\circ S^*)^k\left(T_W(f)\right)-P\left(T_W(f)\right)\right\|_2 \\
%\to & \ 0.
%\end{split}
%\end{equation*}
%Similar argument for $T_U$ shows that $T_U\circ Q=Q\circ T_U$.
%Consequently
%$$T_W\circ \left(P\circ S\circ Q\right)=\left(P\circ S\circ Q\right)\circ T_U.$$
Then we have
\begin{equation*}
\begin{split}
\left\|(P\circ S)(f)-(S\circ Q)(f) \right\|_2\le & \ \left\|(P\circ S)(f)-\left(\frac{1}{n}\sum_{k\in [n]}(S\circ S^*)^k\circ S\right)(f)\right\|_2 \\
& \ + \left\|\left(\frac{1}{n}\sum_{k\in [n]}(S\circ S^*)^k\circ S\right)(f)-\left(S\circ \frac{1}{n}\sum_{k\in [n]}(S^*\circ S)^k\right)(f)\right\|_2 \\
& \ + \left\|\left(S\circ \frac{1}{n}\sum_{k\in [n]}(S^*\circ S)^k\right)(f)-(S\circ Q)(f)\right\|_2 \\ 
= & \ \left\|P(S(f))-\frac{1}{n}\sum_{k\in [n]}(S\circ S^*)^k(S(f))\right\|_2 \\
& \ + \left\|S\left(\frac{1}{n}\sum_{k\in [n]}(S^*\circ S)^k(f)-Q(f)\right)\right\|_2 \\
\to & \ 0
\end{split}
\end{equation*}
and similarly $S^*\circ P=Q\circ S^*$.

Let $f\in L^2(X,\mathcal{D},\mu)$.
Then we have $Q(f)=\mathbb{E}(f|\mathcal{D})=f$ and $P(S(f))=S(Q(f))=S(f)$ by the previous paragraph.
Moreover,
$$\|S(f)\|^2_2=\langle S(f),S(f)\rangle=\langle (S^*\circ S)(f),f\rangle=\langle f,f \rangle=\|f\|_2$$
by the definition of $Q$.
This shows that $S\upharpoonright L^2(X,\mathcal{D},\mu)$ is an isometric embedding into $L^2(X,\mathcal{C},\mu)$.
A similar argument shows that $S^*\upharpoonright L^2(X,\mathcal{C},\mu)$ is an isometric embedding into $ L^2(X,\mathcal{D},\mu)$.
Since $S^*\circ S$ is identity when restricted to $L^2(X,\mathcal{D},\mu)$ and similarly for $S\circ S^*$ we conclude that $S$ is an isometrical isomorphism between $L^2(X,\mathcal{D},\mu)$ and $L^2(X,\mathcal{C},\mu)$.

Putting this together with properties of quotients, see definitions after Claim~\ref{cl:Basic 1}, we get that 
$$R=S_{\mathcal{C}}\circ S\circ I_\mathcal{D}:L^2(X/\mathcal{D},\mu/\mathcal{D})\to L^2(X/\mathcal{C},\mu/\mathcal{C})$$
is a Markov isomorphism such that 
\begin{equation*}
\begin{split}
R\circ T_{U/\mathcal{D}}= & S_{\mathcal{C}}\circ S\circ I_\mathcal{D}\circ T_{U/\mathcal{D}}=S_{\mathcal{C}}\circ S\circ T_U\circ I_\mathcal{D}\\
= & \ S_{\mathcal{C}}\circ T_W\circ S\circ I_\mathcal{D}=T_{W/\mathcal{C}}\circ S_{\mathcal{C}}\circ S\circ I_\mathcal{D} \\
= & \ T_{W/\mathcal{C}}\circ R.
\end{split}
\end{equation*}
By Theorem~\ref{th:Markov Isomorphism}, there is a measure preserving (almost) bijection $i:X/\mathcal{D}\to X/\mathcal{C}$ such that $R(f)(x)=f(i^{-1}(x))$.
We show that $(U/\mathcal{D})(i^{-1}(x),i^{-1}(y))=(W/\mathcal{C})(x,y)$ for $((\mu/\mathcal{C})\times (\mu/\mathcal{C}))$-almost every $(x,y)\in (X/\mathcal{C})\times (X/\mathcal{C})$.
This implies that $U/\mathcal{D}$ and $W/\mathcal{C}$ are isomorphic and consequently $U_\mathcal{D}$ and $W_\mathcal{C}$ are weakly isomorphic, by Proposition~\ref{pr:Quotient and Invariant algebra}~(i), as desired.

Let $V$ be a graphon on $X/\mathcal{C}$ defined as $V(x,y)=(U/\mathcal{D})(i^{-1}(x),i^{-1}(y))$ and $f,g\in L^2(X/\mathcal{C},\mu/\mathcal{C})$.
We have
\begin{equation*}
\begin{split}
\langle T_{W/\mathcal{C}}(f),g\rangle= & \langle R^{-1}(T_{W/\mathcal{C}}(f)),R^{-1}(g)\rangle=\langle T_{U/\mathcal{D}}(R^{-1}(f)),R^{-1}(g)\rangle\\
= & \ \int_{(X/\mathcal{D})\times (X/\mathcal{D})} f(i(x))(U/\mathcal{D})(x,y)g(i(y)) \ d((\mu/\mathcal{D})\times (\mu/\mathcal{D}))(x,y) \\
= & \ \int_{(X/\mathcal{C})\times (X/\mathcal{C})} f(x)V(x,y)g(y) \ d((\mu/\mathcal{C})\times (\mu/\mathcal{C}))(x,y)=\langle T_V(f),g\rangle.
\end{split}
\end{equation*}
That shows $T_{W/\mathcal{C}}=T_V$, consequently $W/\mathcal{C}=V$ and the proof is finished.

{\bf (5) $\Rightarrow$ (1)}.
It follows from Proposition~\ref{pr:tree functions and graphon} that $t(T,W)=t(T,W_\mathcal{C})$ whenever $T$ is a tree and $\mathcal{C}$ is $W$-invariant, and similarly $t(T,U)=t(T,U_\mathcal{D})$. Since, $W_\mathcal{C}$ and $U_\mathcal{D}$ are weakly isomorphic, we have $t(T,W_\mathcal{C})=t(T,U_\mathcal{D})$ and that finishes the proof.
\end{proof}

\section*{Acknowledgments}

The authors are grateful to Jan Hladk\' y for useful discussions and help, and to anonymous referees for many useful suggestions and comments that helped to improve the presentation of the paper.
The first author also thanks Jan Byd\v zovsk\' y, Jan Hladk\' y and Oleg Pikhurko for help with the current version of the introduction.

\appendix
\section{Standard Borel spaces}\label{App A}

Let $X$ be a set and $\mathcal{B}$ a $\sigma$-algebra of subsets of $X$.
We say that $(X,\mathcal{B})$ is a {\it standard Borel space} if there is a separable completely metrizable topology $\tau$ on $X$ such that $\mathcal{B}$ is equal to the $\sigma$-algebra of Borel subsets generated by $\tau$ (see \cite[Section~12]{kechris1995classical}).
We denote the space of all Borel probability measures on $X$  as $\mathscr{P}(X)$ and the space of all measures of total mass at most $1$ as $\mathscr{M}_{\le 1}(X)$.
Note that the sets $\mathscr{P}(X)$ and $\mathscr{M}_{\le 1}(X)$ endowed with the $\sigma$-algebra generated by the maps
$$A\mapsto \mu(A),$$
where $A\in \mathcal{B}$, are standard Borel spaces (see \cite[Section~17]{kechris1995classical}).

Let $\mu,\nu\in \mathscr{M}_{\le 1}(X)$.
We say that $\nu$ is \emph{absolutely continuous with respect to $\mu$} if $\mu(A)=0$ whenever $\nu(A)=0$.
The classical Radon--Nikodym Theorem \cite[Theorem 6.10]{Rud} states that this occurs if and only if there is a unique $f\in L^1(X,\mu)$ such that
$$\nu(A)=\int_A f \ d\mu$$
for every $A\in \mathcal{B}$.
We call $f$ the {\it Radon--Nikodym derivative of $\nu$ with respect to $\mu$} and denote it as $\frac{d\nu}{d\mu}$.

Let $(X,\mathcal{B})$ and $(Y,\mathcal{C})$ be standard Borel spaces.
Suppose that $\mu\in \mathscr{P}(X)$ and $f:X\to Y$ is a Borel map.
Then we define {\it the push-forward of $\mu$ via $f$}, in symbols $f_*\mu$, as 
$$f_*\mu(A)=\mu(f^{-1}(A))$$
for every $A\in \mathcal{C}$.
It is a standard fact that $f_*\mu\in \mathscr{P}(Y)$, see \cite[Exercise~17.28]{kechris1995classical}.

\section{Compact Spaces}\label{App B}

Let $K$ be a compact metric space.
Write $C(K,\mathbb{R})$ for the vector space of all continuous functions from $K$ to $\mathbb{R}$.
Then $C(K,\mathbb{R})$ with the supremum norm and pointwise multiplication is a real Banach algebra.
We denote the $\sigma$-algebra of Borel sets of $K$ as $\mathcal{B}(K)$.
Then $(K,\mathcal{B}(K))$ is a standard Borel space.

It is a standard fact, see~\cite[Section~17]{kechris1995classical}, that the space of Borel measures of total mass at most $1$, i.e., $\mathscr{M}_{\le 1}(K)$, coincides with the space of all positive real-valued Radon measures of total mass at most $1$.
By the Riesz Representation Theorem~\cite[Theorem 6.19]{Rud}, these are exactly the positive linear functionals with norm at most $1$ in the dual space of $C(K,\mathbb{R})$.
The weak* topology on $\mathscr{M}_{\le 1}(K)$ is then defined as the coarsest topology that makes the maps
$$\int_K f \ d\mu_n\to \int_K f \ d\mu$$
continuous for every $f\in C(K,\mathbb{R})$.
It is a standard fact that $\mathscr{M}_{\le 1}(K)$ endowed with the weak* topology is compact metrizable space, see~\cite[Theorem~17.22]{kechris1995classical}, and that the $\sigma$-algebra of Borel sets generated by the weak* topology on $\mathscr{M}_{\le 1}(K)$ coincides with the standard Borel structure on $\mathscr{M}_{\le 1}(K)$ generated by the maps
$$A\mapsto \mu(A),$$
where $A\in \mathcal{B}(K)$ (see \cite[Section~17]{kechris1995classical}).

\begin{theorem}[Real Stone--Weierstrass]\cite[Theorem~7.32]{Rud3}\label{th:StoneWeierstrass}
Let $K$ be a compact metric space and $\mathcal{A}\subseteq C(K,\mathbb{R})$ be a subalgebra that contains ${\bf 1}_K$ and separates points, i.e., for every $k\not=l\in K$ there is $f\in \mathcal{A}$ such that $f(k)\not=f(l)$.
Then $\mathcal{A}$ is uniformly dense in $C(K,\mathbb{R})$.
\end{theorem}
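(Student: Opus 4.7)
The plan is to prove the theorem in three stages: (i) reduce to the case of a uniformly closed subalgebra, (ii) upgrade the algebra structure to a lattice structure, (iii) use a double compactness argument to approximate an arbitrary continuous function. First, since addition, scalar multiplication, and pointwise multiplication are continuous operations on $C(K,\mathbb{R})$ with respect to the uniform norm, the uniform closure $\overline{\mathcal{A}}$ is again a subalgebra containing $\mathbf{1}_K$ and separating points, so without loss of generality I may replace $\mathcal{A}$ by $\overline{\mathcal{A}}$ and aim to show $\mathcal{A} = C(K,\mathbb{R})$.

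The crucial step is to show $\mathcal{A}$ is closed under $f\mapsto |f|$. For this I would establish the elementary lemma that $t\mapsto |t|$ on $[-1,1]$ is the uniform limit of real polynomials vanishing at $0$, via the recursion $p_0(t)=0$, $p_{n+1}(t)=p_n(t)+\tfrac{1}{2}\bigl(t-p_n(t)^2\bigr)$, which one shows by induction satisfies $0\le p_n(t)\le p_{n+1}(t)\le \sqrt{t}$ on $[0,1]$ and hence converges monotonically, so uniformly by Dini's theorem, to $\sqrt{t}$; then $q_n(t):=p_n(t^2)$ converges uniformly on $[-1,1]$ to $|t|$. Given $f\in \mathcal{A}$, scale to $[-M,M]$ where $M=\|f\|_\infty$ and note that $q_n\circ f \in \mathcal{A}$ because $\mathcal{A}$ is an algebra containing $\mathbf{1}_K$; uniform convergence and closedness give $|f|\in \mathcal{A}$. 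The identities $\max(f,g)=\tfrac{1}{2}(f+g+|f-g|)$ and $\min(f,g)=\tfrac{1}{2}(f+g-|f-g|)$ then show $\mathcal{A}$ is closed under finite pointwise maxima and minima.

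Second, I would use separation together with $\mathbf{1}_K\in \mathcal{A}$ to establish the interpolation property: for any $k\neq l$ in $K$ and any $a,b\in \mathbb{R}$ there exists $h\in \mathcal{A}$ with $h(k)=a$ and $h(l)=b$. Indeed, pick $h_0\in \mathcal{A}$ with $h_0(k)\neq h_0(l)$ and solve for $\alpha,\beta\in \mathbb{R}$ in the affine combination $h=\alpha h_0+\beta \mathbf{1}_K$. Finally, fix $g\in C(K,\mathbb{R})$ and $\varepsilon>0$; for each pair $(k,l)$ with $k\neq l$ choose $f_{k,l}\in \mathcal{A}$ matching $g$ at $k$ and $l$ (use a constant function when $k=l$). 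For fixed $k$, the open sets $U_l=\{x\in K: f_{k,l}(x)<g(x)+\varepsilon\}$ cover $K$ since $l\in U_l$, and by compactness finitely many suffice; taking the minimum of the corresponding $f_{k,l}$ yields $f_k\in \mathcal{A}$ with $f_k(k)=g(k)$ and $f_k<g+\varepsilon$ pointwise. Then the open sets $V_k=\{x\in K: f_k(x)>g(x)-\varepsilon\}$ cover $K$, and taking the maximum of finitely many produces $f\in \mathcal{A}$ with $\|f-g\|_\infty\le \varepsilon$.

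The main obstacle is the polynomial approximation of $|t|$: one cannot invoke Weierstrass's polynomial approximation theorem, since that is a special case of the result being proved (namely, $K=[-M,M]$ with $\mathcal{A}$ the polynomial algebra). The recursion above has to be verified by hand, with careful induction to show monotone convergence to $\sqrt{t}$; once this lemma is in place, the rest of the argument is a combinatorial double compactness argument that only requires keeping the inequalities consistently oriented in the two successive applications of finite subcovers.
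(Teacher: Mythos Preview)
Your proof is correct and follows the classical route. However, the paper does not supply its own proof of this theorem: it is stated in Appendix~B purely as a citation to Rudin's \emph{Principles of Mathematical Analysis}, Theorem~7.32, and used as a black box. Your argument is essentially the one found there, with the minor variation that Rudin first proves Weierstrass's polynomial approximation theorem independently (via convolution with a polynomial kernel) and then invokes it to approximate $t\mapsto |t|$, whereas you build the approximating polynomials for $\sqrt{t}$ explicitly by the recursion $p_{n+1}(t)=p_n(t)+\tfrac{1}{2}(t-p_n(t)^2)$ to sidestep any appearance of circularity. Either way the remaining lattice-plus-double-compactness argument is identical.
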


\begin{corollary}[Separating Measures]\label{cor:SepMeasures}
Let $K$ be a compact metric space and $\mathcal{E}\subseteq C(K,\mathbb{R})$ be closed under multiplication, contain ${\bf 1}_K$, and separate points.
Then for every $\mu\not=\nu\in \mathscr{M}_{\le 1}(K)$ there is $f\in \mathcal{E}$ such that
$$\int_K f \ d\mu\not= \int_K f \ d\nu,$$
i.e., the linear functionals that correspond to elements of $\mathcal{E}$ separate points in $\mathscr{M}_{\le 1}(K)$.
\end{corollary}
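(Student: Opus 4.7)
The plan is to reduce the hypothesis on $\mathcal{E}$ to the hypothesis of Theorem~\ref{th:StoneWeierstrass} by passing to the linear span, and then invoke the uniqueness part of the Riesz Representation Theorem. Concretely, let $\mathcal{A}\subseteq C(K,\mathbb{R})$ denote the real linear span of $\mathcal{E}$. I claim $\mathcal{A}$ is a subalgebra: it is obviously a linear subspace, it contains ${\bf 1}_K\in\mathcal{E}$, and it is closed under multiplication because
$$\left(\sum_{i} a_i e_i\right)\left(\sum_{j} b_j e'_j\right)=\sum_{i,j} a_ib_j (e_ie'_j),$$
where each $e_ie'_j$ lies in $\mathcal{E}$ by the assumption that $\mathcal{E}$ is closed under multiplication, hence the product lies in $\mathcal{A}$. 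Since $\mathcal{E}$ separates points, so does $\mathcal{A}$. Theorem~\ref{th:StoneWeierstrass} then gives that $\mathcal{A}$ is uniformly dense in $C(K,\mathbb{R})$.

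Next I argue by contrapositive. Assume $\int_K f \, d\mu = \int_K f \, d\nu$ for every $f\in\mathcal{E}$. By bilinearity of the integral, the equality extends to every element of $\mathcal{A}$. Given an arbitrary $g\in C(K,\mathbb{R})$, pick $f_n\in\mathcal{A}$ with $\|f_n-g\|_\infty\to 0$. Because $\mu,\nu\in\mathscr{M}_{\le 1}(K)$ are finite measures, the estimate
$$\left|\int_K f_n\,d\mu-\int_K g\,d\mu\right|\le \|f_n-g\|_\infty\,\mu(K)\le \|f_n-g\|_\infty$$
and the analogous one for $\nu$ give $\int_K g\,d\mu=\lim_n\int_K f_n\,d\mu=\lim_n\int_K f_n\,d\nu=\int_K g\,d\nu$. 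Thus $\mu$ and $\nu$ induce the same positive linear functional on $C(K,\mathbb{R})$, and the uniqueness clause of the Riesz Representation Theorem (\cite[Theorem~6.19]{Rud}) forces $\mu=\nu$, contrary to our assumption. Therefore there must exist $f\in\mathcal{E}$ separating $\mu$ and $\nu$ via integration.

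There is no real obstacle here; the only points requiring a moment of care are (i) checking that closure of $\mathcal{E}$ under multiplication (rather than of the full algebra $\mathcal{A}$) is enough to make $\mathcal{A}$ an algebra, and (ii) remembering that extending the equality of integrals from a uniformly dense subset to all of $C(K,\mathbb{R})$ uses finiteness of $\mu(K)$ and $\nu(K)$, which is guaranteed since both measures are in $\mathscr{M}_{\le 1}(K)$.
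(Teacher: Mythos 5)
Your proof is correct and is precisely the argument the paper intends: the corollary is stated without proof as an immediate consequence of Theorem~\ref{th:StoneWeierstrass}, obtained exactly by passing to the linear span of $\mathcal{E}$, applying Stone--Weierstrass, and using that elements of $\mathscr{M}_{\le 1}(K)$ are determined by their integrals against $C(K,\mathbb{R})$ via the Riesz Representation Theorem. No gaps.
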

%\begin{proof}
%It is easy to see that the vector space $\mathcal{A}$ that is generated by $\mathcal{E}$ is actually a subalgebra that satisfies the assumptions of Theorem~\ref{th:StoneWeierstrass} and is therefore uniformly dense in $C(K,\mathbb{R})$.

%Let $\mu\not=\nu\in \mathscr{M}_{\le 1}(K)$.
%There is $f\in C(K,\mathbb{R})$ such that
%$$\int_K f \ d\mu\not=\int_K f \ d\nu,$$
%because functionals that correspond to elements of $C(K,\mathbb{R})$ separate points of $\mathscr{M}_{\le 1}(K)$.
%Since $\mathcal{A}$ is uniformly dense, we find $\{g_n\}_{n\in \mathbb{N}}\subseteq \mathcal{A}$ such that $||g_n-f||_{\sup}\to 0$.
%This implies
%$$\int_K g_n \ d\mu \to \int_K f \ d\mu \ \text{ and } \ \int_K g_n \ d\nu\to \int_K f \ d\nu.$$
%Therefore there is $n\in \mathbb{N}$ such that
%$$\int_K g_n \ d\mu\not= \int_K g_n \ d\nu.$$
%By the definition, there are $\{f_i\}_{i\le l}\subseteq \mathcal{E}$ and $\{a_i\}_{i\le l}\subseteq \mathbb{R}$ such that $g_n=\sum_{i\le l} a_if_i$. 
%Consequently there must be $i\le l$ such that
%$$\int_K f_i \ d\mu\not=\int_K f_i \ d\nu$$
%and we are done.
%\end{proof}

\section{Conditional Expectation}\label{App C}

Let $(X,\mathcal{B})$ be a standard Borel space and $\mu\in \mathscr{P}(X)$.
A sub-$\sigma$-algebra $\mathcal{C}$ of $\mathcal{B}$ is {\it relatively complete} if $Z\in \mathcal{C}$ whenever there is $Z_0\in \mathcal{C}$ such that $\mu(Z\triangle Z_0)=0$.
We denote the collection of all relatively complete sub-$\sigma$-algebras as $\Theta_\mu$.

If $\mathcal{C}\in \Theta_\mu$ and $(Y,\mathcal{D})$ is a standard Borel space, then we say that a map $f:X\to Y$ is $\mathcal{C}$-measurable if $f^{-1}(A)\in \mathcal{C}$ for every $A\in \mathcal{D}$.
We denote as $L^2(X,\mathcal{C},\mu)$ the closed linear subspace of $L^2(X,\mu)$ that consists of $\mathcal{C}$-measurable functions.

\begin{theorem}\cite[Section~34]{billingsley}\label{th:ConditionalExp}
Let $(X,\mathcal{B})$ be a standard Borel space, $\mu$ be a Borel probability measure and $\mathcal{C}\in \Theta_\mu$.
Then there is a bounded self-adjoint linear operator
$$\mathbb{E}({-}|\mathcal{C}):L^2(X,\mu)\to L^2(X,\mathcal{C},\mu)$$
that enjoys the following properties:
\begin{enumerate}
	\item $\mathbb{E}({-}|\mathcal{C})$ is the orthogonal projection onto $L^2(X,\mathcal{C},\mu)$,
	\item $\int_X f\mathbb{E}(g|\mathcal{C}) \ d\mu=\int_X \mathbb{E}(f|\mathcal{C})g \ d\mu$ for every $f,g\in L^2(X,\mu)$,
	\item for every $A\in \mathcal{C}$ and $f\in L^2(X,\mu)$ we have 
	$$\int_A f \ d\mu=\int_A \mathbb{E}(f|\mathcal{C}) \ d\mu.$$
\end{enumerate}
\end{theorem}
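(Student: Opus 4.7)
The plan is to realize $\mathbb{E}(-|\mathcal{C})$ as the orthogonal projection of $L^2(X,\mu)$ onto the subspace $L^2(X,\mathcal{C},\mu)$, which makes properties (1)--(3) essentially automatic consequences of Hilbert space geometry. The only genuinely measure-theoretic content lies in verifying that $L^2(X,\mathcal{C},\mu)$ is a well-defined closed subspace, and this is precisely where the relative completeness hypothesis on $\mathcal{C}$ is used.

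First, I would check that $L^2(X,\mathcal{C},\mu)$ is a closed linear subspace of $L^2(X,\mu)$. Linearity is clear since sums and scalar multiples of $\mathcal{C}$-measurable functions are $\mathcal{C}$-measurable. For closedness, suppose $f_n\in L^2(X,\mathcal{C},\mu)$ and $f_n\to f$ in $L^2(X,\mu)$. Passing to a subsequence gives $\mu$-almost everywhere convergence, so $f$ agrees $\mu$-a.e.\ with a $\mathcal{C}$-measurable function $\tilde f$. For any Borel $B\subseteq \mathbb{R}$, the set $f^{-1}(B)$ differs from $\tilde f^{-1}(B)\in \mathcal{C}$ by a $\mu$-null set, so relative completeness of $\mathcal{C}$ gives $f^{-1}(B)\in \mathcal{C}$; hence $f\in L^2(X,\mathcal{C},\mu)$. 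This is the subtle step where the hypothesis $\mathcal{C}\in \Theta_\mu$ is essential --- without it, a pointwise $\mu$-a.e.\ limit of $\mathcal{C}$-measurable functions need not be $\mathcal{C}$-measurable, and the projection would not land in a genuinely $\mathcal{C}$-measurable function space.

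Once the subspace $L^2(X,\mathcal{C},\mu)$ is known to be closed, the standard Hilbert space projection theorem (e.g.\ \cite[Theorem 4.11]{Rud}) produces a bounded linear operator $\mathbb{E}(-|\mathcal{C}):L^2(X,\mu)\to L^2(X,\mathcal{C},\mu)$ that is the orthogonal projection onto this subspace, giving property (1) immediately. Property (2) is then a general fact about orthogonal projections: for any closed subspace $V\subseteq H$ of a Hilbert space with projection $P_V$, one has $\langle P_V f,g\rangle=\langle P_V f,P_V g\rangle=\langle f,P_V g\rangle$, since $g-P_V g\perp P_V f$. Translating this into integrals against $\mu$ yields exactly property (2).

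For property (3), fix $A\in \mathcal{C}$ and $f\in L^2(X,\mu)$. Since ${\bf 1}_A\in L^2(X,\mathcal{C},\mu)$, the residual $f-\mathbb{E}(f|\mathcal{C})$ is orthogonal to ${\bf 1}_A$, so
\[
\int_A f\, d\mu = \langle f,{\bf 1}_A\rangle = \langle \mathbb{E}(f|\mathcal{C}),{\bf 1}_A\rangle + \langle f-\mathbb{E}(f|\mathcal{C}),{\bf 1}_A\rangle = \int_A \mathbb{E}(f|\mathcal{C})\, d\mu.
\]
The only non-routine ingredient in the whole argument is the closedness of $L^2(X,\mathcal{C},\mu)$ via relative completeness; everything else is pure Hilbert space formalism.
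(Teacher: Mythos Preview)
Your argument is correct: realizing $\mathbb{E}(-|\mathcal{C})$ as the orthogonal projection onto the closed subspace $L^2(X,\mathcal{C},\mu)$ and then reading off (1)--(3) from Hilbert space geometry is the standard route, and you correctly identify relative completeness as the ingredient that makes $L^2(X,\mathcal{C},\mu)$ a genuine closed subspace of $L^2(X,\mu)$. The paper itself does not prove this theorem; it is stated in Appendix~\ref{App C} with a citation to \cite[Section~34]{billingsley} and used as a black box, so there is no in-paper proof to compare against.
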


\section{Markov Operators}\label{App D}

We need the theory of Markov operators for the correspondence between Markov projections and relatively complete sub-$\sigma$-algebras, and for the Mean Ergodic Theorem.
Our main reference is \cite{eisner2015operator}.
We point out that it is more convenient for us to define and work with Markov operators on $L^2$ spaces rather than on $L^1$ spaces (as it is defined in \cite{eisner2015operator}).
However, it follows from \cite[Chapter~13,~Proposition~13.6]{eisner2015operator} that every Markov operator on $L^2$ space has a unique extension to a Markov operator on $L^1$ space and that the restriction of a Markov operator on $L^1$ space to $L^2$ space is a Markov operator.

Let $(X,\mathcal{B})$ and $(Y,\mathcal{D})$ be standard Borel spaces  with Borel probability measures $\mu$ and $\nu$, respectively.
We say that a bounded linear operator $S:L^2(X,\mu)\to L^2(Y,\nu)$ is a {\it Markov operator} if $S(f)\ge 0$ whenever $f\ge 0$, $S({\bf 1}_X)={\bf 1}_Y$ and $S^*({\bf 1}_Y)={\bf 1}_X$.

\begin{proposition}\cite[Theorems 13.2 and 13.8]{eisner2015operator}\label{pr:Basic Markov}
The class of Markov operators is closed under adjoints, composition and pointwise limits, in the sense that if $S_n:L^2(X,\mu)\to L^2(Y,\nu)$ are Markov operators for every $n\in \mathbb{N}$ and there is $S:L^2(X,\mu)\to L^2(Y,\nu)$ such that
$$||S_n(f)-S(f)||_2\to 0$$
for every $f\in L^2(X,\mu)$, then $S$ is a Markov operator.
Moreover, every Markov operator is a contraction, i.e., its norm is bounded by $1$.
\end{proposition}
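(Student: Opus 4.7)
The plan is to establish the four claims in the order: contraction, closure under adjoints, closure under composition, closure under pointwise limits, since the contraction bound is needed to control the limit step. The heart of the argument is the Kadison--Schwarz inequality $S(f)^2\le S(f^2)$ (pointwise $\nu$-a.e.), valid for any real $f\in L^2\cap L^\infty$ and any Markov operator $S:L^2(X,\mu)\to L^2(Y,\nu)$. To prove it, observe that for every rational $\lambda$, positivity of $S$ together with $S({\bf 1}_X)={\bf 1}_Y$ yields
$$0\le S\bigl((f-\lambda)^2\bigr)=S(f^2)-2\lambda S(f)+\lambda^2$$
outside a $\nu$-null set that a priori depends on $\lambda$. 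Taking the countable union of these null sets and then using continuity in $\lambda$, we may substitute $\lambda=S(f)(y)$ pointwise to obtain $S(f^2)(y)-S(f)(y)^2\ge 0$ almost everywhere. Integrating against $\nu$ and using $S^*({\bf 1}_Y)={\bf 1}_X$ then gives
$$\|S(f)\|_2^2\le\int_Y S(f^2)\,d\nu=\langle f^2,S^*({\bf 1}_Y)\rangle=\|f\|_2^2,$$
and density of $L^\infty\cap L^2$ in $L^2$ completes the contraction claim.

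Closure under adjoints is essentially built into the definition: the normalization conditions $S^*({\bf 1}_Y)={\bf 1}_X$ and $(S^*)^*({\bf 1}_X)=S({\bf 1}_X)={\bf 1}_Y$ are already required, and positivity of $S^*$ follows because for $g\ge 0$ and $A\in\mathcal{B}$ one has $\int_A S^*(g)\,d\mu=\langle g,S({\bf 1}_A)\rangle\ge 0$, forcing $S^*(g)\ge 0$ almost everywhere. Closure under composition is immediate: for $S:L^2(X,\mu)\to L^2(Y,\nu)$ and $T:L^2(Y,\nu)\to L^2(Z,\rho)$ Markov, positivity is preserved by the chain, $(T\circ S)({\bf 1}_X)=T({\bf 1}_Y)={\bf 1}_Z$, and $(T\circ S)^*({\bf 1}_Z)=S^*(T^*({\bf 1}_Z))=S^*({\bf 1}_Y)={\bf 1}_X$.

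For the pointwise-limit step, the uniform contraction bound from Step 1 applied to each $S_n$ transfers to $S$ via $\|S(f)\|_2=\lim_n\|S_n(f)\|_2\le\|f\|_2$, so $S$ is automatically bounded (and linearity is immediate from pointwise convergence). Positivity passes to the limit by extracting an a.e.-convergent subsequence of $S_n(f)\to S(f)$; the identity $S({\bf 1}_X)={\bf 1}_Y$ is pointwise convergence evaluated at ${\bf 1}_X$; and $S^*({\bf 1}_Y)={\bf 1}_X$ follows from the weak-operator consequence
$$\langle S^*({\bf 1}_Y),f\rangle=\langle{\bf 1}_Y,S(f)\rangle=\lim_n\langle{\bf 1}_Y,S_n(f)\rangle=\lim_n\langle S_n^*({\bf 1}_Y),f\rangle=\langle{\bf 1}_X,f\rangle$$
for every $f\in L^2(X,\mu)$. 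The main obstacle is the Kadison--Schwarz inequality in the first step; once it is in hand, everything else is either formal manipulation of the defining conditions or a routine strong-convergence argument.
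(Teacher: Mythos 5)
Your proof is correct, and it is genuinely self-contained, which is more than the paper does: the paper offers no argument at all for this proposition and simply cites Theorems 13.2 and 13.8 of Eisner--Farkas--Haase--Nagel, where Markov operators are set up on $L^1$ and the $L^2$ statement is transferred via the correspondence recorded in the paper's Appendix~\ref{App D}. The textbook route gets the norm bound by observing that a positive operator with $S({\bf 1}_X)={\bf 1}_Y$ and $S^*({\bf 1}_Y)={\bf 1}_X$ is simultaneously an $L^\infty$-contraction and an $L^1$-contraction and then interpolating (Riesz--Thorin) to obtain the $L^2$ contraction; your route instead proves a pointwise Kadison--Schwarz inequality $S(f)^2\le S(f^2)$ directly in the $L^2$ setting (the rational-$\lambda$ trick to get a single null set, then plugging in $\lambda=S(f)(y)$, is exactly the right way to make this rigorous), integrates it against $\nu$ using $S^*({\bf 1}_Y)={\bf 1}_X$, and extends from $L^2\cap L^\infty$ by density, which is legitimate because the paper's definition already requires $S$ to be bounded. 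The remaining steps — positivity of $S^*$ tested against indicators, the formal computation for compositions, and the strong-limit argument with an a.e.-convergent subsequence for positivity and a weak-operator identity for $S^*({\bf 1}_Y)={\bf 1}_X$ — all match what one would find in the reference. What your approach buys is independence from the $L^1$ theory and from interpolation; what the citation buys the authors is, of course, brevity, plus the broader $L^1$ framework they reuse elsewhere (e.g.\ for Markov projections and the Mean Ergodic Theorem).
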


We say that $P:L^2(X,\mu)\to L^2(X,\mu)$ is a {\it Markov projection} if it is an orthogonal projection and a Markov operator (see \cite[Section~13.3]{eisner2015operator}).

\begin{theorem}[Structure of Markov projections] \cite[Theorem~13.20]{eisner2015operator}\label{th:MarkovProj}
Let $(X,\mathcal{B})$ be a standard Borel space and $\mu$ be a Borel probability measure.
There is a one-to-one correspondence between
\begin{enumerate}
	\item Markov projections,
	\item $\Theta_\mu$, the relatively complete sub-$\sigma$-algebras of $\mathcal{B}$.
\end{enumerate}
The correspondence is given as
$$P\mapsto \{A\in\mathcal{B}:P({\bf 1}_A)={\bf 1}_A\} \ \text{ and } \ \mathcal{C}\mapsto \mathbb{E}({-}|\mathcal{C}).$$
\end{theorem}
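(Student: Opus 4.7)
The plan is to check that both maps land in the right target and are mutually inverse. For the forward map $\mathcal{C} \mapsto \mathbb{E}({-}|\mathcal{C})$, Theorem~\ref{th:ConditionalExp}~(1) already gives that $\mathbb{E}({-}|\mathcal{C})$ is an orthogonal projection; self-adjointness combined with $\mathbb{E}({\bf 1}_X|\mathcal{C}) = {\bf 1}_X$ (as ${\bf 1}_X$ is trivially $\mathcal{C}$-measurable) gives the Markov identities $P({\bf 1}_X) = P^*({\bf 1}_X) = {\bf 1}_X$, while positivity follows from the standard monotonicity of conditional expectation. So this direction is essentially immediate from Theorem~\ref{th:ConditionalExp}.

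For the reverse map $P \mapsto \mathcal{C}_P := \{A \in \mathcal{B} : P({\bf 1}_A) = {\bf 1}_A\}$, I would verify that $\mathcal{C}_P \in \Theta_\mu$. Closure under complement is trivial from $P({\bf 1}_X) = {\bf 1}_X$ and linearity. For closure under finite intersection, the key observation is that for $A, B \in \mathcal{C}_P$ one has $0 \le P({\bf 1}_{A \cap B}) \le \min(P({\bf 1}_A), P({\bf 1}_B)) = {\bf 1}_{A \cap B}$ by positivity, while $\int P({\bf 1}_{A \cap B}) \, d\mu = \int {\bf 1}_{A \cap B} \, d\mu$ by the dual Markov property $P^*({\bf 1}_X) = {\bf 1}_X$; these force equality a.e. Closure under countable unions follows by reducing to increasing unions and invoking $L^2$-continuity of $P$. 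Relative completeness is free, since $P$ is defined on $L^2$ equivalence classes.

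The main technical step is showing $P \mapsto \mathcal{C}_P \mapsto \mathbb{E}({-}|\mathcal{C}_P)$ recovers $P$, i.e., that the range $V$ of $P$ equals $L^2(X, \mathcal{C}_P, \mu)$. The inclusion $L^2(X, \mathcal{C}_P, \mu) \subseteq V$ is easy: indicators of sets in $\mathcal{C}_P$ lie in $V$ by definition, so do $\mathcal{C}_P$-simple functions, and $V$ is $L^2$-closed. For the converse inclusion I would prove that $V \cap L^\infty$ is a subalgebra of $L^\infty$. This rests on the Kadison--Schwarz inequality $P(f^2) \ge P(f)^2$ a.e.\ for real $f \in L^\infty$; in the commutative setting this can be established by writing $P((f - t)^2) \ge 0$ for each rational $t$ (which holds a.e.\ by positivity), intersecting the countably many full-measure sets, and then optimizing in $t$ at each point. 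Applied to $f \in V \cap L^\infty$ with $f = P(f)$, this gives $P(f^2) \ge f^2$ a.e., while $\int P(f^2) \, d\mu = \int f^2 \, d\mu$ (dual Markov property) forces $P(f^2) = f^2$, so $f^2 \in V$; polarization then yields that $V \cap L^\infty$ is closed under products. Since $V \cap L^\infty$ is also closed under uniformly bounded pointwise a.e.\ limits (by dominated convergence into the $L^2$-closed set $V$), functional calculus shows $\{f > t\} \in \mathcal{C}_P$ for every bounded $f \in V$ and every $t \in \mathbb{R}$, so every bounded element of $V$ is $\mathcal{C}_P$-measurable, and truncation extends the conclusion to all of $V$.

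The remaining check that $\mathcal{C} \mapsto \mathbb{E}({-}|\mathcal{C}) \mapsto \mathcal{C}_{\mathbb{E}({-}|\mathcal{C})}$ returns $\mathcal{C}$ follows from the characterization of $L^2(X,\mathcal{C},\mu)$ as the fixed-point space of $\mathbb{E}({-}|\mathcal{C})$ together with the relative completeness of $\mathcal{C}$: if $\mathbb{E}({\bf 1}_A|\mathcal{C}) = {\bf 1}_A$ then ${\bf 1}_A$ agrees $\mu$-almost everywhere with ${\bf 1}_{A_0}$ for some $A_0 \in \mathcal{C}$, and relative completeness puts $A$ itself into $\mathcal{C}$. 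The main obstacle is the pointwise-a.e.\ form of Kadison--Schwarz and the subsequent upgrade of $V \cap L^\infty$ from a linear subspace to a subalgebra; once that algebraic closure is in hand, the identification $V = L^2(X, \mathcal{C}_P, \mu)$ and the bijection statement fall out routinely.
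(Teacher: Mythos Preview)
The paper does not prove this theorem; it is recorded in Appendix~\ref{App D} purely as a citation to \cite[Theorem~13.20]{eisner2015operator}, with no argument supplied. Your proposal therefore goes beyond what the paper does, and the route you outline---verifying that $\mathbb{E}({-}|\mathcal{C})$ is a Markov projection, that $\mathcal{C}_P$ is a relatively complete sub-$\sigma$-algebra, and then identifying the range of $P$ with $L^2(X,\mathcal{C}_P,\mu)$ via the Kadison--Schwarz inequality---is correct and is in fact essentially the standard proof given in \cite{eisner2015operator}.

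One small point deserves tightening. In your final step you write that ``truncation extends the conclusion to all of $V$'', but the truncations $f_n=(-n)\vee(f\wedge n)$ of a general $f\in V$ need not lie in $V$, so you cannot directly apply the conclusion for bounded elements of $V$ to them. The clean fix is to observe that Markov operators are $L^\infty$-contractions, so $P(L^\infty)\subseteq V\cap L^\infty$; since $L^\infty$ is $L^2$-dense in $L^2(X,\mu)$ and $P$ is continuous, $V\cap L^\infty$ is $L^2$-dense in $V$. You have already shown $V\cap L^\infty\subseteq L^2(X,\mathcal{C}_P,\mu)$, and the latter is closed, so $V\subseteq L^2(X,\mathcal{C}_P,\mu)$ follows. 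With this adjustment the argument is complete.
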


\begin{theorem}[Mean Ergodic Theorem] \cite[Theorem~8.6, Example~13.24]{eisner2015operator}\label{th:Mean Ergodic}
Let $(X,\mathcal{B})$ be a standard Borel space, $\mu$ be a Borel probability measure and $S:L^2(X,\mu)\to L^2(X,\mu)$ be a Markov operator.
Then
$$ \left\lVert \frac{1}{n}\sum_{k\in [n]}S^k(f)-P(f)\right\lVert_2\to 0$$
for every $f\in L^2(X,\mu)$, where $P$ is the orthogonal projection onto the closed subspace $\{g\in L^2(X,\mu):S(g)=g\}$.
\end{theorem}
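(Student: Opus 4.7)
The plan is to carry out the standard von Neumann Mean Ergodic Theorem argument, specialized to the fact that $S$ (and hence $S^*$) is a Markov operator and therefore a contraction on $L^2(X,\mu)$ by Proposition~\ref{pr:Basic Markov}. Denote $F=\{g\in L^2(X,\mu):S(g)=g\}$ and $R=\{(I-S)h:h\in L^2(X,\mu)\}$. Write $A_n=\frac{1}{n}\sum_{k\in[n]} S^k$ for the ergodic averages. The strategy is the classical one: verify the identity $L^2(X,\mu)=F\oplus\overline{R}$, show $A_n$ acts as the identity on $F$ and tends to $0$ on $\overline{R}$, and conclude that $A_n\to P$ strongly, where $P$ is the orthogonal projection onto $F$.

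First I would prove the auxiliary fact that $\ker(I-S)=\ker(I-S^*)$. If $S(g)=g$, then, using that $S^*$ is also a contraction,
\begin{equation*}
\|S^*g-g\|_2^2=\|S^*g\|_2^2-2\langle S^*g,g\rangle+\|g\|_2^2 = \|S^*g\|_2^2-2\langle g,Sg\rangle+\|g\|_2^2=\|S^*g\|_2^2-\|g\|_2^2\le 0,
\end{equation*}
so $S^*g=g$; the converse is symmetric. Combined with the general identity $\ker(B)^\perp=\overline{\operatorname{rng}(B^*)}$ applied to $B=I-S^*$, this yields $F^\perp=\overline{R}$, and hence the orthogonal decomposition $L^2(X,\mu)=F\oplus\overline{R}$ with $P$ the projection onto $F$.

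Next I would compute $A_n$ on each summand. For $f\in F$, $S^k(f)=f$ for every $k\in\mathbb{N}$, so $A_n(f)=f=P(f)$, giving convergence trivially. For $f=(I-S)h\in R$, the sum telescopes:
\begin{equation*}
A_n(f)=\frac{1}{n}\sum_{k\in[n]}\bigl(S^k(h)-S^{k+1}(h)\bigr)=\frac{1}{n}\bigl(S(h)-S^{n+1}(h)\bigr),
\end{equation*}
and since $\|S^{n+1}(h)\|_2\le\|h\|_2$ by contractivity, we get $\|A_n(f)\|_2\le \frac{2\|h\|_2}{n}\to 0 = P(f)$. For general $f\in\overline{R}$, approximate by $f_j\in R$ with $\|f-f_j\|_2<\varepsilon$; then $\|A_n(f)-A_n(f_j)\|_2\le \|f-f_j\|_2<\varepsilon$ (each $A_n$ is a convex combination of contractions, hence a contraction), and the conclusion follows by combining with the case just treated. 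An arbitrary $f\in L^2(X,\mu)$ splits as $f=P(f)+(f-P(f))$ with $f-P(f)\in F^\perp=\overline{R}$, and the two cases combine to give $\|A_n(f)-P(f)\|_2\to 0$.

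The only potentially delicate point is the identity $\ker(I-S)=\ker(I-S^*)$, which is where contractivity is genuinely used; everything else is algebraic manipulation plus a one-line density argument. Since the hypothesis that $S$ is a Markov operator is used solely through $\|S\|,\|S^*\|\le 1$, the theorem is really a statement about arbitrary Hilbert space contractions, and no measure-theoretic subtlety concerning the underlying standard Borel space enters the proof.
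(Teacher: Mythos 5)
Your proof is correct and complete. The paper does not actually prove this statement: it is quoted from \cite{eisner2015operator} (Theorem~8.6 together with Example~13.24), where mean ergodicity is developed in a more general Banach-space framework and then specialized to Markov operators. What you give instead is the classical von Neumann splitting argument for Hilbert-space contractions, and every step checks out: the Markov hypothesis enters only through $\|S\|,\|S^*\|\le 1$ (Proposition~\ref{pr:Basic Markov}); the computation showing $\ker(I-S)=\ker(I-S^*)$ is the standard (and genuinely needed) contraction trick; the identity $\ker(I-S^*)^\perp=\overline{\operatorname{rng}(I-S)}$ gives the decomposition $L^2(X,\mu)=F\oplus\overline{R}$; the telescoping bound $\|A_n((I-S)h)\|_2\le 2\|h\|_2/n$ and the uniform bound $\|A_n\|\le 1$ make the density argument work; and splitting an arbitrary $f$ as $P(f)+(f-P(f))$ finishes the proof. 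Your closing observation is also accurate: the result is really a theorem about arbitrary Hilbert-space contractions, so nothing about the standard Borel structure is used, whereas the cited source's route buys the stronger package the paper exploits elsewhere (e.g., that limits of Markov operators are Markov, so that $P$ is in fact a Markov projection, which is what feeds into Theorem~\ref{th:MarkovProj} in the proof of (4)~$\Rightarrow$~(5)); that extra fact is not part of the present statement, so your proof covers exactly what is claimed.
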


\section{Quotient Spaces}\label{App E}

\begin{theorem}\label{th:Quotients and Markov}
Let $(X,\mathcal{B})$ be a standard Borel space, $\mu$ be a Borel probability measure on $X$ and $\mathcal{C}\in \Theta_\mu$.
There is a standard Borel space $(X/\mathcal{C},\mathcal{C}')$, a Borel probability measure $\mu/\mathcal{C}$ on $X/\mathcal{C}$, measurable surjection $q_\mathcal{C}:X\to X/\mathcal{C}$, and Markov operators
$$S_\mathcal{C}:L^2(X,\mu)\to L^2(X/\mathcal{C},\mu/\mathcal{C}) \ \text{ and } \  I_{\mathcal{C}}:L^2(X/\mathcal{C},\mu/\mathcal{C})\to L^2(X,\mu)$$
such that
\begin{enumerate}
	\item $\mu/\mathcal{C}$ is the push-forward of $\mu$ via $q_\mathcal{C}$,
	\item $S_\mathcal{C}^*=I_{\mathcal{C}}$,
	\item $S_\mathcal{C}\circ \mathbb{E}({-}|\mathcal{C})=S_\mathcal{C}$,
	\item $I_\mathcal{C}$ is an isometry onto $L^2(X,\mathcal{C},\mu)$,
	\item $I_\mathcal{C}\circ S_\mathcal{C}=\mathbb{E}({-}|\mathcal{C})$,
	\item $S_\mathcal{C}\circ I_\mathcal{C}$ is the identity on $L^2(X/\mathcal{C},\mu/\mathcal{C})$,
	\item $I_\mathcal{C}(f)(x)=f(q_\mathcal{C}(x))$ for every $f\in L^2(X/\mathcal{C},\mu/\mathcal{C})$.
\end{enumerate}
\end{theorem}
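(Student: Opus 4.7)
The plan is to construct the quotient data $(X/\mathcal{C},\mathcal{C}',\mu/\mathcal{C},q_\mathcal{C})$ first, then define $I_\mathcal{C}$ as pullback along $q_\mathcal{C}$, set $S_\mathcal{C}=I_\mathcal{C}^*$, and deduce the seven listed properties from standard Hilbert space arguments combined with Claim~\ref{cl:Basic 0}.

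For the construction, since $L^2(X,\mathcal{C},\mu)$ is a separable subspace of $L^2(X,\mu)$, I would choose a countable family $\{A_n\}_{n\in\mathbb{N}}\subseteq \mathcal{C}$ whose indicator functions have dense linear span in $L^2(X,\mathcal{C},\mu)$. Define the Borel map $q\colon X\to 2^\omega$ by $q(x)_n=\mathbf{1}_{A_n}(x)$, put $\mu/\mathcal{C}=q_*\mu$, and take $X/\mathcal{C}$ to be a standard Borel subspace of $2^\omega$ of full $\mu/\mathcal{C}$-measure onto which $q$ is surjective, using relative completeness of $\mathcal{C}$ to absorb any null discrepancy. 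Then (1) holds by definition. Defining $I_\mathcal{C}(f)=f\circ q_\mathcal{C}$ gives (7) immediately, and the push-forward change-of-variables formula shows $I_\mathcal{C}$ is an $L^2$-isometry whose image lies in $L^2(X,\mathcal{C},\mu)$. Since $\mathbf{1}_{A_n}$ is the pullback of the $n$-th coordinate, the image is dense in and hence equal to $L^2(X,\mathcal{C},\mu)$, which is (4). Furthermore $I_\mathcal{C}$ is positive and $I_\mathcal{C}(\mathbf{1})=\mathbf{1}$, so $I_\mathcal{C}$ is Markov.

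Now set $S_\mathcal{C}=I_\mathcal{C}^*$, which gives (2). Positivity of $S_\mathcal{C}$ follows from $\langle S_\mathcal{C}(f),g\rangle=\langle f,I_\mathcal{C}(g)\rangle\ge 0$ for $f,g\ge 0$; pairing against arbitrary test functions and using the change-of-variables formula gives $S_\mathcal{C}(\mathbf{1}_X)=\mathbf{1}_{X/\mathcal{C}}$; and $S_\mathcal{C}^*(\mathbf{1}_{X/\mathcal{C}})=I_\mathcal{C}(\mathbf{1}_{X/\mathcal{C}})=\mathbf{1}_X$, so $S_\mathcal{C}$ is Markov. Because $I_\mathcal{C}$ is an isometric embedding, $S_\mathcal{C}\circ I_\mathcal{C}=I_\mathcal{C}^*\circ I_\mathcal{C}$ is the identity on $L^2(X/\mathcal{C},\mu/\mathcal{C})$, giving (6). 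Consequently $I_\mathcal{C}\circ S_\mathcal{C}$ is the orthogonal projection onto the image of $I_\mathcal{C}$, which equals $L^2(X,\mathcal{C},\mu)$, and by Claim~\ref{cl:Basic 0} this projection is exactly $\mathbb{E}(-|\mathcal{C})$, yielding (5). Finally (3) follows by combining (5) and (6): $S_\mathcal{C}\circ \mathbb{E}(-|\mathcal{C})=S_\mathcal{C}\circ I_\mathcal{C}\circ S_\mathcal{C}=S_\mathcal{C}$.

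The main obstacle is the descriptive set theoretic side of the quotient construction: arranging that $q_\mathcal{C}$ is a genuine surjection onto a bona fide standard Borel space, and that the pullback $\sigma$-algebra $\{q_\mathcal{C}^{-1}(A):A\in \mathcal{C}'\}$ recovers $\mathcal{C}$ on the nose rather than merely agreeing with it modulo null sets. This is where relative completeness of $\mathcal{C}$ is essential, since different choices of the countable generating family $\{A_n\}$ differ by $\mu$-null discrepancies that $\Theta_\mu$ absorbs by definition; once this is handled, everything else reduces to routine manipulations with adjoints, orthogonal projections, and change of variables.
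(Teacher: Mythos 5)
Your proposal is correct and is essentially the paper's own argument written out in full: the paper obtains the quotient data by citing \cite[Exercise~17.43]{kechris1995classical} (whose standard proof is exactly your map into $2^\omega$ built from a countable family of sets in $\mathcal{C}$ with $L^2$-dense span, combined with passing to a Borel full-measure subset of the analytic image $q(X)$ as in Corollary~\ref{cor:Quotient} and redefining $q$ on the resulting $\mathcal{C}$-measurable null set to get a genuine surjection), then defines $I_\mathcal{C}$ by condition (7) and quotes \cite{eisner2015operator} for the Markov-embedding facts that you instead derive directly from the isometry identities $I_\mathcal{C}^*I_\mathcal{C}=\mathrm{id}$ and $I_\mathcal{C}I_\mathcal{C}^*=\mathbb{E}({-}|\mathcal{C})$ via Claim~\ref{cl:Basic 0}. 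One small remark: the statement never asks that the pullback $\sigma$-algebra $\{q_\mathcal{C}^{-1}(A):A\in\mathcal{C}'\}$ recover $\mathcal{C}$ exactly, only that (4) hold, so the ``main obstacle'' you flag reduces to the mod-null identification that relative completeness already absorbs, and your argument goes through as written.
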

\begin{proof}
The existence of $(X/\mathcal{C},\mathcal{C}')$, $\mu/\mathcal{C}$ and $q_\mathcal{C}$ follows from \cite[Exercise~17.43~ii)]{kechris1995classical}.
Define $I_\mathcal{C}$ by the condition (7).
Then it is easy to see that $I_\mathcal{C}$ is a Markov embedding by \cite[Section~12.2, Theorem~13.9]{eisner2015operator} and all the other properties follow from \cite[Section~13.2 and~13.3]{eisner2015operator}.
\end{proof}

The next results imply that the space $X/\mathcal{C}$ is unique up to a ``$\mu$-negligible part''.

\begin{corollary}\label{cor:Quotient}
Let $(X,\mathcal{B})$ and $(Y,\mathcal{D})$ be standard Borel spaces.
Suppose that $\mu$ is a Borel probability measure on $X$ and $f:X\to Y$ is a Borel function.
Write $\mathcal{C}\in \Theta_\mu$ for the minimum relatively complete sub-$\sigma$-algebra that makes $f$ measurable.
Then for every $g_0\in L^2(X,\mathcal{C},\mu)$ there is a Borel map $g_1:Y\to \mathbb{C}$ such that $g_0(x)=\left(g_1\circ f\right)(x)$ for $\mu$-almost every $x\in X$.
\end{corollary}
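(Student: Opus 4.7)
The plan is to reduce Corollary~\ref{cor:Quotient} to the classical Doob--Dynkin factorization lemma, after first stripping off the relative completion. First I would identify $\mathcal{C}$ with the $\mu$-relative completion $\langle \sigma(f)\rangle$ of the raw sub-$\sigma$-algebra $\sigma(f) := \{f^{-1}(A) : A \in \mathcal{D}\}$. Indeed, $\langle \sigma(f)\rangle \in \Theta_\mu$ makes $f$ measurable, so it contains $\mathcal{C}$ by the minimality assumption; conversely any $\mathcal{C}'\in \Theta_\mu$ that makes $f$ measurable must contain $\sigma(f)$ and, being relatively complete, also $\langle \sigma(f)\rangle$. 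Hence $\mathcal{C} = \langle \sigma(f)\rangle$, so that for every $A \in \mathcal{C}$ there exists $A' \in \sigma(f)$ with $\mu(A\triangle A')=0$.

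Given $g_0 \in L^2(X, \mathcal{C}, \mu)$, the second step is to produce a $\sigma(f)$-measurable representative $g_0'$ that equals $g_0$ $\mu$-almost everywhere. This is a routine approximation: pick $\mathcal{C}$-measurable simple functions converging to $g_0$ in $L^2$, replace each indicator $\mathbf{1}_A$ appearing in them by some $\mathbf{1}_{A'}$ with $A' \in \sigma(f)$ and $\mu(A \triangle A') = 0$, extract a pointwise-a.e.-convergent subsequence, and define $g_0'$ as the pointwise limit (setting $g_0' = 0$ on the exceptional null set). The resulting $g_0'$ is $\sigma(f)$-measurable and agrees with $g_0$ off a $\mu$-null set.

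Finally, I would invoke the Doob--Dynkin factorization lemma: since $(Y,\mathcal{D})$ is standard Borel and $g_0'$ is $\sigma(f)$-measurable, there exists a Borel $g_1 : Y \to \mathbb{R}$ with $g_0' = g_1 \circ f$ pointwise on $X$. This gives $g_0 = g_1 \circ f$ $\mu$-almost everywhere, as required. The only nontrivial ingredient is Doob--Dynkin itself, whose standard proof verifies the factorization first on indicators of sets of the form $f^{-1}(A)$, extends by linearity to $\sigma(f)$-measurable simple functions, and then passes to monotone (or $L^2$) limits, using that $(Y,\mathcal{D})$ is standard Borel to ensure Borel measurability of the limit. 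The relative-completion reduction in the previous step is essentially bookkeeping and should not present a substantive obstacle.
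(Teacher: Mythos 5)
Your argument is correct, but it is a genuinely different route from the one in the paper. The paper's proof is a two-line reduction to its quotient machinery: it pushes $\mu$ forward to $\nu=f_*\mu$ on $Y$, invokes \cite[Theorem~21.10]{kechris1995classical} to find a Borel set $Y_0\subseteq f(X)$ with $\nu(Y_0)=1$ (this is where the standard Borel structure on $Y$ and the analyticity of $f(X)$ are genuinely used), and then applies Theorem~\ref{th:Quotients and Markov}, whose Markov embedding $I_{\mathcal{C}}(h)=h\circ q_{\mathcal{C}}$ is an isometry onto $L^2(X,\mathcal{C},\mu)$, to realize $g_0$ as a composition with $f$. You instead first identify $\mathcal{C}$ with the $\mu$-relative completion of $\sigma(f)=\{f^{-1}(A):A\in\mathcal{D}\}$ (and the tacit claim that this completion is exactly $\{Z\in\mathcal{B}:\exists Z_0\in\sigma(f),\ \mu(Z\triangle Z_0)=0\}$ does hold, since that family is itself a relatively complete $\sigma$-algebra), then strip the completion from $g_0$ by an a.e.\ modification along simple functions, and finish with the Doob--Dynkin factorization lemma. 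Your approach is more elementary and in fact more general: it never uses that $(Y,\mathcal{D})$ is standard Borel --- what matters in Doob--Dynkin is that the \emph{target} of $g_0'$ is $\mathbb{R}$, and the measurability of the pointwise limit of $\mathcal{D}$-measurable functions is automatic, not a consequence of $Y$ being standard Borel as you suggest --- whereas the paper's argument trades this self-contained work for machinery (quotients, Markov embeddings, analytic images) that it has already set up and reuses elsewhere. Both proofs are complete; yours buys independence from descriptive set theory, the paper's buys brevity given its appendices.
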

\begin{proof}
Put $\nu=f_*\mu\in \mathscr{P}(Y)$ and note that by \cite[Theorem~21.10]{kechris1995classical} there is a $Y_0\in \mathcal{D}$ such that $Y_0\subseteq f(X)$ and $\nu(Y_0)=1$.
Then use Theorem~\ref{th:Quotients and Markov}.
\end{proof}

We say that a map $S:L^2(X,\mu)\to L^2(Y,\nu)$ is a {\it Markov isomorphism} if it is a Markov operator that is an isometrical bijection (see \cite[Section~12.2]{eisner2015operator}).

\begin{theorem}\label{th:Markov Isomorphism}
Let $(X,\mathcal{B})$, $(Y,\mathcal{D})$ be a standard Borel spaces, $\mu$ be a Borel probability measure on $X$ and $\nu$ be a Borel probability measure on $Y$.
Then there is a one-to-one correspondence between
\begin{enumerate}
	\item Markov isomorphisms $S:L^2(X,\mu)\to L^2(Y,\nu)$,
	\item measure preserving almost bijections $i: X\to Y$.
\end{enumerate}
The correspondence from (2) to (1) is given as
$$i\mapsto S_i(f)(x)=f(i^{-1}(x)). $$
\end{theorem}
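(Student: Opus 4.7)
The plan is to verify the correspondence in both directions, with the hard work concentrated in reconstructing a point map from an abstract Markov isomorphism.

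For the direction $(2)\Rightarrow(1)$, given a measure preserving almost bijection $i:X\to Y$, I would simply check that $S_i(f)(x)=f(i^{-1}(x))$ defines a Markov isomorphism. Well-definedness and the isometry property follow from the change-of-variables formula (since $i$ is measure preserving), positivity and $S_i({\bf 1}_X)={\bf 1}_Y$ are immediate, and $S_{i^{-1}}$ is a two-sided inverse. The identity $S_i^*=S_{i^{-1}}$ then forces $S_i^*({\bf 1}_Y)={\bf 1}_X$, confirming that $S_i$ is a Markov operator.

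For the direction $(1)\Rightarrow(2)$, the first key step is to show that $S$ sends characteristic functions to characteristic functions. Given $A\in\mathcal{B}$, set $g=S({\bf 1}_A)$. Since $S$ is positive and $S({\bf 1}_X)={\bf 1}_Y$, we have $0\le g\le 1$. Using that $S$ is an isometry and that $S^*({\bf 1}_Y)={\bf 1}_X$, a short calculation gives
\[
\int_Y g^2\,d\nu=\|{\bf 1}_A\|_2^2=\mu(A)=\langle {\bf 1}_A,S^*({\bf 1}_Y)\rangle=\int_Y g\,d\nu,
\]
so $\int g(1-g)\,d\nu=0$, forcing $g\in\{0,1\}$ a.e. Hence $S({\bf 1}_A)={\bf 1}_{\Phi(A)}$ for some $\Phi(A)\in\mathcal{D}$, defined uniquely modulo $\nu$-null sets. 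Linearity of $S$ together with the decomposition ${\bf 1}_A={\bf 1}_{A\cap A'}+{\bf 1}_{A\setminus A'}$ and uniqueness of characteristic function representations shows that $\Phi$ preserves disjoint unions and set differences, and hence is a Boolean $\sigma$-homomorphism on the measure algebra $\mathcal{B}/\mu$. Applying the same construction to $S^{-1}$ yields $\Phi^{-1}$, so $\Phi$ is an isomorphism of measure algebras that preserves measure (by the isometry property).

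The second and more delicate key step is to realize the measure algebra isomorphism $\Phi:\mathcal{B}/\mu\to\mathcal{D}/\nu$ by an actual point map. I would invoke the standard point realization theorem for standard Borel probability spaces, whose proof proceeds by choosing a countable algebra $\mathcal{B}_0\subseteq \mathcal{B}$ that generates $\mathcal{B}$, choosing Borel representatives $\Phi(A)$ for each $A\in\mathcal{B}_0$ in a way compatible with finite Boolean operations (possible since $\mathcal{B}_0$ is countable, by modifying representatives on a single null set), and using these choices to define a Borel map $i^{-1}:Y\to X$ on a conull subset. Concretely, choose a Borel isomorphism of $X$ with a compact metric space whose Borel structure is generated by a countable separating algebra $\mathcal{B}_0$; then a point $y\in Y$ (outside an exceptional null set) determines a unique point $x\in X$ by the condition $y\in \Phi(A)\iff x\in A$ for all $A\in\mathcal{B}_0$. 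One verifies that this map is Borel, measure preserving, and a bijection on conull sets, and that the induced operator $S_i$ agrees with $S$ on characteristic functions of $\mathcal{B}_0$, hence on all of $L^2(X,\mu)$ by density and continuity.

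The main obstacle will be executing the point realization cleanly: one must choose Borel representatives of $\Phi(A)$ for $A$ ranging over a countable generating algebra so that all finite Boolean relations among the $A$'s are preserved on the nose, then excise a single null set on which the resulting map fails to be a bijection. This is classical (essentially Rokhlin's isomorphism theorem for Lebesgue spaces) and is the content of the appendix in \cite[Section~12.2]{eisner2015operator} being invoked; I would cite it rather than reproduce the full argument.
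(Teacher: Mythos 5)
Your proposal is correct and follows essentially the same route as the paper: factor the Markov isomorphism through an isomorphism of measure algebras and then invoke the classical point-realization theorem for standard Borel probability spaces (the paper simply cites \cite[Theorem~12.10]{eisner2015operator} for the first step and \cite[Theorem~1.9]{Kerr} for the second). The only difference is that you spell out the first reduction directly (the indicator-preservation computation via $\int g(1-g)\,d\nu=0$, using $S^{-1}=S^*$), which is a nice self-contained substitute for the citation but not a different argument.
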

\begin{proof}
It follows from \cite[Theorem~12.10]{eisner2015operator} that there is a correspondence between Markov isomorphisms and measure algebra isomorphisms.
It is a standard fact (see \cite[Theorem~1.9]{Kerr}) that every measure algebra isomorphism is induced by a measurable measure preserving almost bijection under the assumption that the spaces are standard Borel.
\end{proof}

\end{document}